\documentclass[11pt]{article}

\usepackage{geometry}
\geometry{letterpaper}

\usepackage{amsmath,amsthm,amsfonts,amssymb,epsfig,graphicx,color, float}

\usepackage[colorlinks, linkcolor=blue,citecolor=blue]{hyperref}
\usepackage{pdfpages}
\usepackage{graphicx}

\usepackage{natbib}
\usepackage{dsfont}

\numberwithin{table}{section}
\numberwithin{figure}{section}
\numberwithin{equation}{section}

\definecolor{darkblue}{rgb}{.2, 0.2,.8}
\definecolor{darkgreen}{rgb}{0,0.5,0.3}
\definecolor{darkred}{rgb}{.8, .1,.1}

\newcommand{\bfalp}{\vect{\alpha}}

\newtheorem{lemma}{Lemma}[section]

\newtheorem{proposition}[lemma]{Proposition}

\newtheorem{corollary}[lemma]{Corollary}

\newtheorem{remark}{Remark}[section]

\usepackage{bm}
\newcommand{\vect}[1]{\pmb{#1}}
\newcommand{\mat}[1]{\boldsymbol{\bm #1}}

\usepackage{marginnote}

\allowdisplaybreaks

\begin{document}
\title{
\bf\
{Modeling discrete common-shock risks through matrix
distributions}
}

\author{Martin Bladt, Eric C. K. Cheung, Oscar Peralta and Jae-Kyung Woo\\
\\
}
\date{}
\bibliographystyle{apalike}

\maketitle

\begin{abstract}
We introduce a novel class of bivariate common-shock discrete phase-type (CDPH) distributions to describe dependencies in loss modeling, with an emphasis on those induced by common shocks. By constructing two jointly evolving terminating Markov chains that share a common evolution up to a random time corresponding to the common shock component, and then proceed independently, we capture the essential features of risk events influenced by shared and individual-specific factors. We derive explicit expressions for the joint distribution of the termination times and prove various class and distributional properties, facilitating tractable analysis of the risks. Extending this framework, we model random sums where aggregate claims are sums of continuous phase-type random variables with counts determined by these termination times, and show that their joint distribution belongs to the multivariate phase-type or matrix-exponential class. We develop estimation procedures for the CDPH distributions using the expectation-maximization algorithm and demonstrate the applicability of our models through simulation studies and an application to bivariate insurance claim frequency data.
\end{abstract}


\noindent{\textbf{Keywords}: Phase-type distribution; Discrete bivariate distribution; Common shocks; Expectation-maximization algorithm.}

\section{Introduction}

Ever since their conception in \cite{jensen1954distribution}, phase-type distributions, which represent the time until absorption in a finite state-space Markov chain, have been extensively used due to their flexibility in approximating a wide range of distributions and their analytical tractability (see \cite{bladt2017matrix} for a review). Such a class of distributions has also gained popularity in actuarial science in the last two decades. In insurance ruin problems, phase-type distributions can be used to model the claim amounts (e.g. \cite{Drekic2004ruinPHclaims, Frostig2012ruinnumberPHclaim}) and/or the inter-arrival times (an assumption implicitly embedded in the class of risk processes with Markovian claim arrivals; see e.g. \cite{Cheung2010MAPruin}). Interested readers are referred to \cite{Asmussen2020RunProb} and \cite{Badescu2009MAPreview} for comprehensive reviews.
Outside ruin theory, \cite{Zadeh2016PHcredibility} developed Bayesian and {B}\"uhlmann credibility theories under phase-type claims whereas \cite{wang2018renewalsum} derived the distribution of the discounted aggregate claims under phase-type renewal process. Generalizations of phase-type distributions have also been proved to be successful for fitting insurance loss data. In particular, \cite{Ahn2012logPH} utilized log phase-type distribution to fit the well-known Danish fire data, and \cite{bladt2023PHMOEseverity} proposed a phase-type mixture-of-experts regression model to fit the severities from the French Motor Third Party Liability dataset. Apart from modeling insurance losses and interclaim times, phase-type mortality law was studied by \cite{Lin2007PHmortality}, and a review of the use of phase-type distributions in health care systems can be found in \cite{Fackrell2009PHhealthreview}. While all the above works are concerned with continuous phase-type distributions, discrete phase-type (DPH) distributions have also been applied to model claim frequencies. For example, Panjer-type recursions for the calculations of compound distributions under DPH claim counts were developed by \cite{E2006PHrecursion,wu2010PHrecursion,Ren2010PHrecursion}. 

In modern risk theory and insurance mathematics, there is increasing need for researchers and practitioners to accurately model the dependencies among multiple risk factors or risk events, as this is crucial for effective risk management. Such dependence modeling is particularly important in scenarios where different risks are influenced by shared factors or catastrophic events—commonly referred to as \emph{common shocks}, leading to realization of random variables that are strongly dependent. For example, an accident or a natural disaster can cause claims in both personal injury and property damage, resulting in correlated claim counts and dependent losses in different insurance contracts or business lines. Capturing dependencies is essential for a realistic assessment of joint behavior of risks. While the use of copulas is popular (see e.g. \cite{Frees1998copula} for a review), specific models for multivariate claim counts (e.g. \cite{Shi2014multiBNcounts, Pechon2018MultiClaimCount, Bolance2019multiGLMcounts, Fung2019MOEcorrelatedcounts}) and multivariate losses (e.g. \cite{lee2012MixEr, willmot2015MixEr}) have been developed as well. The presence of dependency has also led to related research topics in actuarial science, such as multivariate risk measures (see e.g. \cite{Cossette2016MultiTVaR, landsman2016multivariate}) and multivariate ruin problems (see e.g. \cite{Badila2015multiruin, Albrecher2022multiruinLaguerre}).

 Dependencies also commonly arise in the applied probability literature, and several approaches to constructing continuous multivariate phase-type distributions have been proposed in the field. These models often introduce dependencies by coupling the underlying Markov processes in specific ways, namely, by employing the exit times from different sets as in \cite{assaf1984multivariate}, or by using accumulating reward systems as in \cite{kulkarni1989new}. In \cite{bladt2010multivariate}, a class of multivariate matrix-exponential (MME) distributions, characterized by rational multivariate Laplace transforms, was introduced. However, the multivariate matrix distributions in \cite{kulkarni1989new} and \cite{bladt2010multivariate} do not necessarily admit closed-form probability density function in general, possibly limiting their applications. With an emphasis on explicit results and ease of applications in an actuarial context, recent efforts have also been made in \cite{cheung2022multivariate, bladt2023tractable, albrecher2023joint, bladt2023robust} to introduce new classes of distributions which are conditionally independent mixtures of matrix distributions. Specifically, in \cite{bladt2023tractable}, the author proposed to employ a system of Markov jump processes that agree on the initial state and evolve independently from that point onward. 

Despite the increasing interest in continuous multivariate matrix distributions mentioned above, extensions of the DPH class to a multivariate setting, particularly in the context of matrix distributions, are relatively scarce in the literature. One of the first attempts is the work by \cite{he2016analysis}, who constructed a multivariate DPH distribution via the numbers of different types of batches that have arrived before absorption of a discrete-time Markov chain, with an accompanying estimation scheme discussed in \cite{he2016parameter}. A discrete analog of \cite{kulkarni1989new}'s continuous multivariate phase-type distributions was  considered in \cite{navarro2019order}; however, such a class, like its continuous counterpart, does not admit closed-form expressions for the joint probability mass function (pmf). A more recent development was presented by \cite{bladt2023robust} which is a discrete version of \cite{bladt2023tractable}'s work, where the authors investigated the termination time of discrete-time Markov chains that begin in the same initial state but otherwise behave independently. However, many of these models impose dependencies that do not necessarily explicitly capture the common-shock scenarios that arise.

In this paper, we introduce a novel class of bivariate \emph{common-shock discrete phase-type} (CDPH) distributions. Our approach extends the DPH framework to a bivariate setting by constructing two temporarily jointly evolving Markov chains that share a common evolution up to a random time—the occurrence of the common shock (or the number of common shocks)—after which they evolve independently. This construction effectively models the dependency induced by common shocks while retaining the tractability of DPH distributions. More precisely, we consider two discrete-time Markovian processes, $M_1$ and $M_2$, which evolve identically until a decoupling time $\tau_{1,2}$, after which they proceed independently and terminate at possibly different times $\tau_1$ and $\tau_2$. We derive explicit expressions for the joint distribution of $(\tau_1, \tau_2)$, which characterize the CDPH distributions. This framework allows us to model scenarios where two types of risks are affected by shared events before diverging due to independent factors. We can show that, in the bivariate case, our proposed class of CDPH distributions is larger than the class of multivariate DPH distributions in \cite{bladt2023robust} and is a subclass of that considered by \cite{navarro2019order}. Building on this foundation, we explore applications of the CDPH distributions in risk modeling involving random sums with common shocks. In particular, we consider aggregate risks represented as sums of random variables whose counts are determined by $\tau_1$ and $\tau_2$. This setup is particularly relevant in insurance, where aggregate claims may depend on both shared catastrophic events and independent claim occurrences. We characterize the joint distributions of these random sums, demonstrating the practicality and analytical convenience of our models.

Our contributions can be summarized as follows: We introduce the CDPH distributions, a new class of bivariate DPH distributions designed to effectively model dependencies induced by common shocks. To facilitate their applications in loss modeling, we provide explicit formulas for the joint pmf and probability generating function (pgf) of the CDPH class and prove various closure properties. Characterizations of the associated random sums are also provided. Furthermore, we develop estimation procedures for the CDPH distributions using the expectation-maximization (EM) algorithm, enabling their statistical implementation. Finally, we illustrate the applicability and effectiveness of our models through simulation studies, highlighting their potential in practical risk assessment scenarios. 
In essence, our work extends the versatility of phase-type distributions in modeling complex dependencies in arising in actuarial science, offering both theoretical insights and practical tools for actuaries and risk managers.

The remainder of the paper is organized as follows. In Section \ref{sec:common-shocks}, we define the CDPH distributions and derive their fundamental properties. Section \ref{sec:random-sums} applies these distributions to analyze random sums. In Section \ref{sec:estimation}, we address the estimation of the CDPH distributions from data, proposing an EM algorithm tailored to our models. Section \ref{sec:simulation} presents simulation studies that demonstrate the implementation and performance of our estimation procedures, as well as an application to a dataset comprising bivariate insurance claim frequencies. Finally, Section \ref{sec:conclusion} concludes the paper and suggests directions for future research.

\section{A new class of CDPH distributions}\label{sec:common-shocks}

This section provides some of the essential properties of CDPH distributions, starting from basic principles and then treating more intricate class properties. Throughout, we link our class to related constructions in the literature.

\subsection{Background on DPH}\label{subsec:DPHDef}

Consider a Markov chain $M=\{M(n)\}_{n\in\mathbb{N}}$ evolving in a finite state-space $\mathcal{E}=\{1, 2, \dots, d\}$ according to some initial probability vector $\bm{\alpha} = (\alpha_1, \alpha_2, \dots, \alpha_d)$ and subprobability matrix $\bm{P}=\{p_{ij}\}_{i,j \in \mathcal{E}}$, where $\mathbb{N}$ is the set of non-negative integers. Here, $M$ is assumed to be (almost surely) terminating, meaning that while in state $i$, it gets absorbed in the next step with probability $1 - \sum_{j=1}^{d} p_{ij}$. We note that if $M$ has no superfluous states, then the property of $M$ terminating is equivalent to the eigenvalues of $\bm{P}$ being strictly contained in the complex unit circle (see e.g. Theorem 1.2.63 in \cite{bladt2017matrix}).

An alternative setup is to consider the augmented state-space $\mathcal{E} \cup \Delta$, where $\Delta$ is an absorbing cemetery state used to indicate the termination of $M$. In this setup, $M$ evolves according to the probability vector $(\alpha_1, \alpha_2, \dots, \alpha_d, 0)$ and a block-partitioned transition matrix
\[
\begin{pmatrix}
\bm{P} & \bm{1} - \bm{P}\bm{1} \\
\bm{0} & 1
\end{pmatrix},
\]
where $\bm{1}$ denotes a column vector of ones of appropriate dimension ($d$ in this case), $\bm{P}\bm{1}$ corresponds to the sum of probabilities from each state to all others in $\mathcal{E}$, and $\bm{0}$ denotes a zero matrix of appropriate dimension (a zero row vector of dimension $d$ in this case). The final `1' is a scalar representing the probability of remaining in the state $\Delta$ once entered. Both setups—the terminating and the absorbing ones—are interchangeable for our purposes, and we switch back and forth depending on the situation.

Define $\tau$ to be the step in which $M$ gets terminated, that is, let
\[
\tau:=\inf\{n\in\mathbb{N}_+: M(n)\notin\mathcal{E}\},
\]
 where $\mathbb{N}_+$ is the set of positive integers. Then $\tau$ follows what is known as a DPH distribution, which we denote by $\operatorname{DPH}(\bm{\alpha},\bm{P})$. The class of DPH distributions is particularly tractable (see e.g. Chapter 1.2.6 in \cite{bladt2017matrix}). For instance, its pmf over $\mathbb{N}_+$ is explicitly given by
\begin{align*}
f_\tau(\ell) & := \mathbb{P}(\tau=\ell) = \bm{\alpha}\bm{P}^{\ell - 1} \left(\bm{1}-\bm{P}\bm{1}\right).
\end{align*}
Likewise, the pgf takes the tractable form
\begin{align*}
\mathbb{E}(x^{\tau}) = x \bm{\alpha}(\bm{I}-x\bm{P})^{-1} \left(\bm{1}-\bm{P}\bm{1}\right),
\end{align*}
where $\bm{I}$ denotes the identity matrix. Note that the above formula is valid for all $x\in\mathbb{R}$ on the real line such that the eigenvalues of $x\bm{P}$ lie within the complex unit circle (the set that contains the interval $[-1,1]$).

\subsection{Definition, joint probability and joint moments of CDPH}\label{subsec:CDPHDef}

As mentioned in the introduction, literature on the construction of multivariate DPH distributions is rather limited. Our paper is inspired by the approach in \cite{bladt2023robust}, adding the flexibility that the Markov chains evolve synchronously for some period of time (rather than just starting in the same initial state) before proceeding independently. The exact details behind our novel construction are as follows. Consider two jointly evolving terminating processes, \( M_1 = \{ M_1(n) \}_{n \in \mathbb{N}} \) and \( M_2 = \{ M_2(n) \}_{n \in \mathbb{N}} \), each taking values in the state space \( \mathcal{E} \cup \mathcal{S} \), where \( \mathcal{E} \cap \mathcal{S} = \emptyset \) and \( |\mathcal{E}| \wedge |\mathcal{S}| \geq 1 \). We construct the joint process \( (M_1, M_2) \) such that each marginal process is a Markov chain, with a specific joint dependence structure: we let \( M_1(n) = M_2(n) \) for all \( n \) up to the first time they exit the set \( \mathcal{E} \). After this point, they evolve independently within \( \mathcal{S} \) and eventually terminate, possibly at different times.

More explicitly, we let \( M_1 \) be a terminating Markov chain that evolves according to the block-partitioned initial probability vector \( (\bm{\alpha}, \bm{0}) \) and subprobability matrix
\[
\bm{P}_1:=
\begin{pmatrix}
\bm{P} & \bm{U} \\
\bm{0} & \bm{Q}_1
\end{pmatrix},
\]
where \( \bm{P} \) is a subprobability matrix over \( \mathcal{E} \) (like before), \( \bm{U} \) denotes the transition probabilities from \( \mathcal{E} \) to \( \mathcal{S} \), \( \bm{Q}_1 \) is a subprobability matrix over \( \mathcal{S} \). It is further assumed that each row of the submatrix $(\bm{P}\,\,\,\bm{U})$ sums to one, so that the Markov chain $M_1$ (and also $M_2$ below) must enter $\mathcal{S}$ before termination. For \( M_2 \), we define \( M_2(n) = M_1(n) \) for all \( n \leq \tau_{1,2} \), where
\[
\tau_{1,2} := \inf\{ n \in \mathbb{N}_+ : M_1(n) \notin \mathcal{E} \}.
\]
Obviously, $\tau_{1,2}$ follows a $\operatorname{DPH}(\bm{\alpha},\bm{P})$ distribution. After time \( \tau_{1,2} \), we let \( M_2 \) evolve independently as a terminating Markov chain within \( \mathcal{S} \) according to the subprobability matrix \( \bm{Q}_2 \). This construction yields \( M_2 \) as a Markov chain in \( \mathcal{E} \cup \mathcal{S} \) that evolves according to the block-partitioned initial probability vector \( (\bm{\alpha}, \bm{0}) \) and subprobability matrix
\[
\bm{P}_2:=
\begin{pmatrix}
\bm{P} & \bm{U} \\
\bm{0} & \bm{Q}_2
\end{pmatrix}.
\]
We now define the bivariate random variable \( (\tau_1, \tau_2) \) via the termination times of \( M_1 \) and \( M_2 \) so that, for \( k \in \{1, 2\} \),
\begin{align*}
\tau_k := \inf\{ n \in \mathbb{N}_+ : M_k(n) \notin \mathcal{E} \cup \mathcal{S} \}.
\end{align*}
For each \( k \in \{1, 2\} \), the marginal distribution of $\tau_k$ is clearly $\operatorname{DPH}((\bm{\alpha}, \bm{0}),\bm{P}_k)$. Moreover, \( \tau_1 \) and \( \tau_2 \) are greater than \( \tau_{1,2} \). Specifically, by writing $\tau_1=\tau_{1,2} + (\tau_1-\tau_{1,2})$ and $\tau_2=\tau_{1,2} + (\tau_2-\tau_{1,2})$, one observes that $\tau_1$ and $\tau_2$ are dependent for two reasons:
\begin{itemize}

\item Both $\tau_1$ and $\tau_2$ share the term $\tau_{1,2}$, the time until which the Markov chains $M_1$ and $M_2$ evolve identically. The random variable $\tau_{1,2}$ can be regarded as the common shock component.

\item Although the Markov chains $M_1$ and $M_2$ evolve independently after time $\tau_{1,2}$, the remaining times until they get absorbed (namely $\tau_1-\tau_{1,2}$ and $\tau_2-\tau_{1,2}$) are only conditionally independent given the state $M_1(\tau_{1,2})$ of the Markov chain $M_1$ at the time $\tau_{1,2}$. Unconditionally, the variables $\tau_1-\tau_{1,2}$ and $\tau_2-\tau_{1,2}$ are dependent.

\end{itemize}
The joint distribution of \( (\tau_1, \tau_2) \) is provided in the following proposition.

\begin{proposition}\label{th:mass-taua-taub}
For \( n_1, n_2 \geq 2 \), \( m, z_1, z_2 \geq 1 \), and \( i \in \mathcal{E} \), define  the unconditional and conditional joint pmf's of \( (\tau_1, \tau_2) \) by
\begin{align*}
f_{\tau_1,\tau_2}(n_1, n_2) &:= \mathbb{P}(\tau_1 = n_1, \tau_2 = n_2), \\
f_{\tau_1,\tau_2|M_1,M_2}(n_1, n_2|i) &:= \mathbb{P}(\tau_1 = n_1, \tau_2 = n_2 \mid M_1(0) = M_2(0) = i),
\end{align*}
and define the auxiliary functions
\begin{align*}
\psi(m, z_1, z_2) &:= \mathbb{P}(\tau_{1,2} = m, \tau_1 - \tau_{1,2} = z_1, \tau_2 - \tau_{1,2} = z_2), \\
\psi_i(m, z_1, z_2) &:= \mathbb{P}(\tau_{1,2} = m, \tau_1 - \tau_{1,2} = z_1, \tau_2 - \tau_{1,2} = z_2 \mid M_1(0) = M_2(0) = i).
\end{align*}
Then, we have
\begin{align}
f_{\tau_1,\tau_2}(n_1, n_2) &= \sum_{m=1}^{\min\{n_1, n_2\} - 1} \psi(m, n_1 - m, n_2 - m), \label{eq:phi1} \\
f_{\tau_1,\tau_2|M_1,M_2}(n_1, n_2|i) &= \sum_{m=1}^{\min\{n_1, n_2\} - 1} \psi_i(m, n_1 - m, n_2 - m), \label{eq:phik1}
\end{align}
and
\begin{align}
\psi(m, z_1, z_2) &= \bm{\alpha} \bm{P}^{m - 1} \bm{U} \left( (\bm{Q}_1^{z_1 - 1} \bm{q}_1) \odot (\bm{Q}_2^{z_2 - 1} \bm{q}_2) \right), \label{eq:psi1} \\
\psi_i(m, z_1, z_2) &= \bm{e}_i^\intercal \bm{P}^{m - 1} \bm{U} \left( (\bm{Q}_1^{z_1 - 1} \bm{q}_1) \odot (\bm{Q}_2^{z_2 - 1} \bm{q}_2) \right), \label{eq:psik1}
\end{align}
where \( \bm{q}_1 = \bm{1} - \bm{Q}_1 \bm{1} \) and \( \bm{q}_2 = \bm{1} - \bm{Q}_2 \bm{1} \) denote the one-step probabilities of termination from the states in \( \mathcal{S} \) for \( M_1 \) and \( M_2 \), respectively; \( \bm{e}_i \) denotes the \( i \)-th canonical column vector, and \( \odot \) denotes the Hadamard (entrywise) product of two vectors.
\end{proposition}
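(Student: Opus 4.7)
The plan is to reduce the joint law of $(\tau_1,\tau_2)$ to an expression conditioned on the common shock time $\tau_{1,2}$ and the exit state into $\mathcal{S}$, and then to use the standard DPH pmf formula on each of the three phases (joint evolution on $\mathcal{E}$, then independent evolution of $M_1$ and $M_2$ on $\mathcal{S}$). The decompositions \eqref{eq:phi1} and \eqref{eq:phik1} are essentially bookkeeping: by construction $\tau_{1,2}\ge 1$ and both $\tau_1,\tau_2$ strictly exceed $\tau_{1,2}$ (a state in $\mathcal{S}$ must be visited before absorption), so $\tau_{1,2}\in\{1,\ldots,\min\{n_1,n_2\}-1\}$. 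Writing $\tau_1=\tau_{1,2}+(\tau_1-\tau_{1,2})$ and $\tau_2=\tau_{1,2}+(\tau_2-\tau_{1,2})$ and applying the law of total probability over the disjoint events $\{\tau_{1,2}=m\}$ yields \eqref{eq:phi1}, with \eqref{eq:phik1} being the same argument under the starting condition $M_1(0)=M_2(0)=i$.

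The substantive step is to derive \eqref{eq:psi1} (from which \eqref{eq:psik1} follows by replacing $\bm{\alpha}$ with $\bm{e}_i^{\intercal}$). I would further condition on the common exit state $M_1(\tau_{1,2})=M_2(\tau_{1,2})=s\in\mathcal{S}$ and write
\begin{equation*}
\psi(m,z_1,z_2)=\sum_{s\in\mathcal{S}}\P(\tau_{1,2}=m,\,M_1(\tau_{1,2})=s)\,\P(\tau_1-\tau_{1,2}=z_1,\,\tau_2-\tau_{1,2}=z_2\mid \tau_{1,2}=m,M_1(\tau_{1,2})=s).
\end{equation*}
The first factor is a standard DPH-type quantity: $M_1$ must remain in $\mathcal{E}$ for steps $1,\ldots,m-1$ and transition to $s\in\mathcal{S}$ at step $m$, so a direct entry-wise expansion gives $\P(\tau_{1,2}=m,M_1(\tau_{1,2})=s)=\bigl(\bm{\alpha}\bm{P}^{m-1}\bm{U}\bigr)_s$, i.e. the $s$-th entry of the row vector $\bm{\alpha}\bm{P}^{m-1}\bm{U}$.

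For the second factor I would invoke the strong Markov property at the stopping time $\tau_{1,2}$ together with the construction itself: after $\tau_{1,2}$, the chain $M_1$ evolves on $\mathcal{S}$ with subtransition matrix $\bm{Q}_1$ and, independently of $M_1$, the chain $M_2$ evolves on $\mathcal{S}$ with subtransition matrix $\bm{Q}_2$, both starting from the common state $s$. Hence $\tau_1-\tau_{1,2}$ and $\tau_2-\tau_{1,2}$ are conditionally independent given $M_1(\tau_{1,2})=s$, each being a DPH first-passage time, yielding
\begin{equation*}
\P(\tau_k-\tau_{1,2}=z_k\mid M_1(\tau_{1,2})=s)=\bm{e}_s^{\intercal}\bm{Q}_k^{z_k-1}\bm{q}_k,\qquad k=1,2.
\end{equation*}
Multiplying the three factors, summing over $s\in\mathcal{S}$, and recognising that $\sum_{s}v_s\,a_s\,b_s=\bm{v}(\bm{a}\odot\bm{b})$ for any compatible row vector $\bm{v}$ and column vectors $\bm{a},\bm{b}$ gives precisely \eqref{eq:psi1}. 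The conditional identity \eqref{eq:psik1} follows by the same computation with the deterministic initial law $\bm{e}_i^{\intercal}$ in place of $\bm{\alpha}$.

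The only mildly delicate point, and thus the place I would spend the most care, is the conditional independence step: it relies on the strong Markov property applied at $\tau_{1,2}$ combined with the fact that, by construction, $M_2$ is coupled to $M_1$ only up to $\tau_{1,2}$ and is then driven by an independent source of randomness with law determined entirely by $\bm{Q}_2$ and the starting state $s$. Once this is formalised, the remaining calculations are routine matrix manipulations.
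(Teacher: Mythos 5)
Your proposal is correct and follows essentially the same route as the paper: decompose via the law of total probability over $\{\tau_{1,2}=m\}$, condition further on the common exit state in $\mathcal{S}$, use the (strong) Markov property to get conditional independence of $\tau_1-\tau_{1,2}$ and $\tau_2-\tau_{1,2}$ with DPH-type factors $(\bm{\alpha}\bm{P}^{m-1}\bm{U})_s$, $(\bm{Q}_k^{z_k-1}\bm{q}_k)_s$, and collect the sum over $s$ as a Hadamard product. Your write-up is if anything slightly more explicit than the paper's about the strong Markov justification, but the argument is the same.
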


\begin{proof}
Equations \eqref{eq:phi1} and \eqref{eq:phik1} follow by straightforward application of the law of total probability. Next,
\begin{align*}
&\mathbb{P}(\tau_{1,2} = m, \tau_1 - \tau_{1,2} = z_1, \tau_2 - \tau_{1,2} = z_2) \\
&\quad = \sum_{j \in \mathcal{S}} \mathbb{P}(\tau_{1,2} = m, M_1(m) = M_2(m) = j) \\
&\qquad \times \mathbb{P}(\tau_1 - \tau_{1,2} = z_1, \tau_2 - \tau_{1,2} = z_2 \mid \tau_{1,2} = m, M_1(m) = M_2(m) = j) \\
&\quad = \sum_{j \in \mathcal{S}} \mathbb{P}(\tau_{1,2} = m, M_1(m) = M_2(m) = j) \\
&\qquad \times \mathbb{P}(\tau_1 - \tau_{1,2} = z_1 \mid \tau_{1,2} = m, M_1(m) = j) \mathbb{P}(\tau_2 - \tau_{1,2} = z_2 \mid \tau_{1,2} = m, M_2(m) = j)\\
&\quad = \sum_{j \in \mathcal{S}} (\bm{\alpha} \bm{P}^{m - 1} \bm{U})_j (\bm{Q}_1^{z_1 - 1} \bm{q}_1)_j (\bm{Q}_2^{z_2 - 1} \bm{q}_2)_j \\
&\quad = \bm{\alpha} \bm{P}^{m - 1} \bm{U} \left( (\bm{Q}_1^{z_1 - 1} \bm{q}_1) \odot (\bm{Q}_2^{z_2 - 1} \bm{q}_2) \right),
\end{align*}
which establishes \eqref{eq:psi1}. Finally, \eqref{eq:psik1} follows by replacing the initial distribution \( \bm{\alpha} \) with \( \bm{e}_i^\intercal \).
\end{proof}

Using Proposition \ref{th:mass-taua-taub}, one can readily show the following corollary.
\begin{corollary}\label{coro:pgf-taua-taub}
For $ \zeta_0 , \zeta_1, \zeta_2\in [0,1]$, the joint pgf of $(\tau_{1,2}, \tau_1 - \tau_{1,2}, \tau_2 - \tau_{1,2})$ is given by
\begin{align} \label{eq:pgfCDPH0}
\mathbb{E}&\left(\zeta_0^{\tau_{1,2}}  \zeta_1^{\tau_1-\tau_{1,2}} \zeta_2^{\tau_2-\tau_{1,2}}  \right)  = \sum_{j\in\mathcal{S}}(\bm{\alpha}(\bm{I}\zeta_0^{-1} -\bm{P})^{-1}\bm{U})_j((\bm{I}\zeta_1^{-1} -\bm{Q}_1)^{-1}\bm{q}_1)_j((\bm{I}\zeta_2^{-1} -\bm{Q}_2)^{-1}\bm{q}_2)_j.
\end{align}
Moreover, the joint pgf of $(\tau_1, \tau_2)$ is given by
\begin{align} \label{eq:pgfCDPH}
\mathbb{E}\left( \zeta_1^{\tau_1} \zeta_2^{\tau_2} \right) = \sum_{j\in\mathcal{S}}(\bm{\alpha}(\bm{I}(\zeta_1\zeta_2)^{-1} -\bm{P})^{-1}\bm{U})_j((\bm{I}\zeta_1^{-1} -\bm{Q}_1)^{-1}\bm{q}_1)_j((\bm{I}\zeta_2^{-1} -\bm{Q}_2)^{-1}\bm{q}_2)_j.
\end{align}
\end{corollary}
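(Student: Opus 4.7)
The plan is to derive \eqref{eq:pgfCDPH0} by directly evaluating the triple sum defining the pgf, plugging in the expression for $\psi(m,z_1,z_2)$ given in \eqref{eq:psi1}, and then recognizing three independent geometric series. The second formula \eqref{eq:pgfCDPH} will then follow essentially for free by the substitution $\zeta_0 \mapsto \zeta_1\zeta_2$.

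More concretely, I would begin by writing
\[
\mathbb{E}\!\left(\zeta_0^{\tau_{1,2}}\zeta_1^{\tau_1-\tau_{1,2}}\zeta_2^{\tau_2-\tau_{1,2}}\right)
=\sum_{m\geq 1}\sum_{z_1\geq 1}\sum_{z_2\geq 1} \zeta_0^{m}\zeta_1^{z_1}\zeta_2^{z_2}\,\psi(m,z_1,z_2),
\]
which is absolutely convergent for $\zeta_0,\zeta_1,\zeta_2\in[0,1]$ because the $\psi(m,z_1,z_2)$ are non-negative and sum to at most $1$. By Fubini/Tonelli, the triple sum may be interchanged freely. Substituting \eqref{eq:psi1} and pulling the row vector $\bm{\alpha}$, the matrix $\bm{U}$, and the Hadamard structure outside the summation, this becomes
\[
\bm{\alpha}\!\left(\sum_{m\geq 1}\zeta_0^{m}\bm{P}^{m-1}\right)\!\bm{U}\!\left[\left(\sum_{z_1\geq 1}\zeta_1^{z_1}\bm{Q}_1^{z_1-1}\bm{q}_1\right)\odot\left(\sum_{z_2\geq 1}\zeta_2^{z_2}\bm{Q}_2^{z_2-1}\bm{q}_2\right)\right].
\]

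Next I would handle each of the three geometric series. Since $M_1$ is terminating with no superfluous states, the eigenvalues of $\bm{P}$ lie strictly inside the unit circle (as recalled in Section 2.1), and the same reasoning applies to $\bm{Q}_1$ and $\bm{Q}_2$ which are subprobability matrices associated with terminating chains on $\mathcal{S}$. Consequently, for $\zeta\in[0,1]$, the eigenvalues of $\zeta\bm{P}$, $\zeta\bm{Q}_1$, $\zeta\bm{Q}_2$ also lie strictly inside the unit circle, so each Neumann-type series converges and
\[
\sum_{n\geq 1}\zeta^{n}\bm{P}^{n-1}=\zeta(\bm{I}-\zeta\bm{P})^{-1}=(\bm{I}\zeta^{-1}-\bm{P})^{-1},
\]
with analogous identities for $\bm{Q}_1$ and $\bm{Q}_2$. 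Substituting these three closed forms yields
\[
\bm{\alpha}(\bm{I}\zeta_0^{-1}-\bm{P})^{-1}\bm{U}\!\left[((\bm{I}\zeta_1^{-1}-\bm{Q}_1)^{-1}\bm{q}_1)\odot((\bm{I}\zeta_2^{-1}-\bm{Q}_2)^{-1}\bm{q}_2)\right],
\]
and writing $\bm{u}^{\intercal}(\bm{v}\odot\bm{w})=\sum_j u_j v_j w_j$ with $\bm{u}^\intercal=\bm{\alpha}(\bm{I}\zeta_0^{-1}-\bm{P})^{-1}\bm{U}$ gives exactly \eqref{eq:pgfCDPH0}.

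For \eqref{eq:pgfCDPH}, I would simply note the deterministic identity $\tau_k=\tau_{1,2}+(\tau_k-\tau_{1,2})$ for $k=1,2$, so that
\[
\zeta_1^{\tau_1}\zeta_2^{\tau_2}=(\zeta_1\zeta_2)^{\tau_{1,2}}\zeta_1^{\tau_1-\tau_{1,2}}\zeta_2^{\tau_2-\tau_{1,2}},
\]
and apply \eqref{eq:pgfCDPH0} with $\zeta_0$ replaced by $\zeta_1\zeta_2\in[0,1]$. I do not anticipate any real obstacle here: the only subtlety is the justification of the geometric-series inversion, which is immediate from the strict spectral radius condition coming from the terminating assumption, and the manipulation of the Hadamard product, which is just bookkeeping.
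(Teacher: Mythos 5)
Your proposal is correct and follows essentially the same route as the paper: expand the pgf as a triple sum over the pmf $\psi$ from \eqref{eq:psi1}, sum the three geometric series into resolvents, and obtain \eqref{eq:pgfCDPH} by the substitution $\zeta_0=\zeta_1\zeta_2$. The extra care you take with convergence and the spectral radius condition is fine but not a departure from the paper's argument.
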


\begin{proof}
As $\psi$ is the joint pmf of $(\tau_{1,2},\tau_1-\tau_{1,2},\tau_2-\tau_{1,2})$, we have
\begin{align*} 
\mathbb{E}&\left(  \zeta_0^{\tau_{1,2}} \zeta_1^{\tau_1-\tau_{1,2}} \zeta_2^{\tau_2-\tau_{1,2}} \right)  = \sum_{z_0,z_1,z_2 \ge 1} \psi(z_0,z_1,z_2) \zeta_0^{z_0} \zeta_1^{z_1} \zeta_2^{z_2}\\
& = \sum_{z_0,z_1,z_2 \ge 1} \sum_{j\in\mathcal{S}} (\bm{\alpha} \bm{P}^{z_0-1}\bm{U})_j (\bm{Q}_1^{z_1-1}\bm{q}_1)_j (\bm{Q}_2^{z_2-1}\bm{q}_2)_j \zeta_0^{z_0} \zeta_1^{z_1} \zeta_2^{z_2} \\
& = \zeta_0 \zeta_1 \zeta_2 \sum_{j\in\mathcal{S}}\sum_{z_0,z_1,z_2 \ge 1} (\bm{\alpha} (\zeta_0\bm{P})^{z_0-1}\bm{U})_j ((\zeta_1\bm{Q}_1)^{z_1-1}\bm{q}_1)_j ((\zeta_2\bm{Q}_2)^{z_2-1}\bm{q}_2)_j\\
& = \zeta_0 \zeta_1 \zeta_2 \sum_{j\in\mathcal{S}} (\bm{\alpha}(\bm{I} - \zeta_0\bm{P})^{-1}\bm{U})_j((\bm{I} - \zeta_1 \bm{Q}_1)^{-1}\bm{q}_1)_j((\bm{I}- \zeta_2\bm{Q}_2)^{-1}\bm{q}_2)_j,
\end{align*}
which yields \eqref{eq:pgfCDPH0}.  
Substituting $\zeta_0 = \zeta_1\zeta_2$ into \eqref{eq:pgfCDPH0} gives \eqref{eq:pgfCDPH}.
\end{proof}

Employing Corollary \ref{coro:pgf-taua-taub}, we are able to compute the cross moments of \( (\tau_1, \tau_2) \) via the following result regarding falling factorial moments. Here, for $n\in\mathbb{N}$ and a generic random variable $X$, we let $X^{[n]}=X(X-1)(X-2)\cdots (X-n+1)$ denote the falling factorial of $X$.
\begin{corollary}\label{coro:jointmoments1}
For $n_0, n_1,n_2\in\mathbb{N}$, the joint factorial moments of $(\tau_{1,2},\tau_1-\tau_{1,2},\tau_2-\tau_{1,2})$ are given by
\begin{align} \label{eq:jointmoments1}
\mathbb{E}\bigl(\tau_{1,2}^{[n_0]}\,(\tau_1-\tau_{1,2})^{[n_1]}\,(\tau_2-\tau_{1,2})^{[n_2]}\bigr) 
= \sum_{j\in\mathcal{S}}(\bm{\alpha}\widetilde{\bm{P}}(n_0)\bm{U})_j
(\widetilde{\bm{Q}}_1(n_1)\bm{q}_1)_j
(\widetilde{\bm{Q}}_2(n_2)\bm{q}_2)_j,
\end{align}
where
\begin{align*}
\widetilde{\bm{P}}(n_0)&:=\left. \frac{\partial^{n_0}}{\partial \zeta_1^{n_0}}(\bm{I}\zeta_0^{-1}-\bm{P})^{-1}\right|_{\zeta_0=1} = n_0!\bm{P}^{n_0-1}(\bm{I}-\bm{P})^{-n_0-1},\qquad n_0\in\mathbb{N}_+,\\
\widetilde{\bm{Q}}_1(n_1)&:=\left. \frac{\partial^{n_1}}{\partial \zeta_1^{n_1}}(\bm{I}\zeta_1^{-1}-\bm{Q}_1)^{-1}\right|_{\zeta_1=1} = n_1!\bm{Q}_1^{n_1-1}(\bm{I}-\bm{Q}_1)^{-n_1-1},\qquad n_1\in\mathbb{N}_+,\\
\widetilde{\bm{Q}}_2(n_2)&:=\left. \frac{\partial^{n_2}}{\partial \zeta_2^{n_2}}(\bm{I}\zeta_2^{-1}-\bm{Q}_2)^{-1}\right|_{\zeta_2=1} = n_2!\bm{Q}_2^{n_2-1}(\bm{I}-\bm{Q}_2)^{-n_2-1}, \qquad n_2\in\mathbb{N}_+,
\end{align*}
with $\widetilde{\bm{P}}(0)=(\bm{I}-\bm{P})^{-1}$, $\widetilde{\bm{Q}}_1(0)=(\bm{I}-\bm{Q}_1)^{-1}$ and $\widetilde{\bm{Q}}_2(0)=(\bm{I}-\bm{Q}_2)^{-1}$.
\end{corollary}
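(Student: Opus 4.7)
The strategy is to obtain the joint factorial moments by taking mixed partial derivatives of the joint pgf \eqref{eq:pgfCDPH0} at $(\zeta_0,\zeta_1,\zeta_2)=(1,1,1)$, exploiting the product-over-$j\in\mathcal{S}$ structure of that formula so that each mixed partial factors into a product of one-variable derivatives.

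First I would invoke the standard identity $\mathbb{E}\bigl(X_1^{[n_1]}X_2^{[n_2]}X_3^{[n_3]}\bigr)=\partial_{\zeta_1}^{n_1}\partial_{\zeta_2}^{n_2}\partial_{\zeta_3}^{n_3}\mathbb{E}\bigl(\zeta_1^{X_1}\zeta_2^{X_2}\zeta_3^{X_3}\bigr)\bigr|_{(1,1,1)}$ for non-negative integer-valued random variables, provided the relevant moments exist. The termination of $M_1$ and $M_2$ ensures (via Theorem 1.2.63 of \cite{bladt2017matrix}) that the spectral radii of $\bm{P}$, $\bm{Q}_1$ and $\bm{Q}_2$ are strictly less than one, so the three matrix resolvents appearing in \eqref{eq:pgfCDPH0} are analytic in a neighborhood of $(\zeta_0,\zeta_1,\zeta_2)=(1,1,1)$, all factorial moments are finite, and differentiation can be exchanged with the finite sum over $j\in\mathcal{S}$. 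Since \eqref{eq:pgfCDPH0} has the separated form $\sum_{j\in\mathcal{S}}A_j(\zeta_0)B_j(\zeta_1)C_j(\zeta_2)$ with $A_j(\zeta_0)=(\bm{\alpha}(\bm{I}\zeta_0^{-1}-\bm{P})^{-1}\bm{U})_j$ and analogous $B_j,C_j$, the mixed derivative collapses to $\sum_{j\in\mathcal{S}}A_j^{(n_0)}(1)B_j^{(n_1)}(1)C_j^{(n_2)}(1)$, and the whole proof reduces to a single matrix-derivative computation.

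That computation is to show $\partial_\zeta^n(\bm{I}\zeta^{-1}-\bm{P})^{-1}=n!\,\bm{P}^{n-1}(\bm{I}-\zeta\bm{P})^{-(n+1)}$ for $n\geq 1$. I would rewrite $(\bm{I}\zeta^{-1}-\bm{P})^{-1}=\zeta(\bm{I}-\zeta\bm{P})^{-1}$ and first establish by induction (or by termwise differentiation of the convergent series $\sum_{k\geq 0}\zeta^{k+1}\bm{P}^k$) the auxiliary identity $\partial_\zeta^k(\bm{I}-\zeta\bm{P})^{-1}=k!\,\bm{P}^k(\bm{I}-\zeta\bm{P})^{-(k+1)}$, which relies on the fact that $\bm{P}$ commutes with $(\bm{I}-\zeta\bm{P})^{-1}$. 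Applying the Leibniz rule to the product $\zeta\cdot(\bm{I}-\zeta\bm{P})^{-1}$ leaves two terms, $\zeta\cdot n!\,\bm{P}^n(\bm{I}-\zeta\bm{P})^{-(n+1)}+n\cdot(n-1)!\,\bm{P}^{n-1}(\bm{I}-\zeta\bm{P})^{-n}$, which factor out $n!\,\bm{P}^{n-1}(\bm{I}-\zeta\bm{P})^{-(n+1)}$ and then telescope via $\zeta\bm{P}+(\bm{I}-\zeta\bm{P})=\bm{I}$ into the claimed closed form. Evaluating at $\zeta=1$ yields $\widetilde{\bm{P}}(n)$ for $n\geq 1$, whereas the case $n=0$ is just the value $(\bm{I}-\bm{P})^{-1}$; the same argument with $\bm{P}$ replaced by $\bm{Q}_1$ or $\bm{Q}_2$ produces $\widetilde{\bm{Q}}_1(n_1)$ and $\widetilde{\bm{Q}}_2(n_2)$, and reinserting the three factors into the sum over $j\in\mathcal{S}$ delivers \eqref{eq:jointmoments1}.

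The main obstacle is largely bookkeeping: verifying the clean Leibniz collapse and keeping the three differentiation variables cleanly separated (the statement of the corollary even writes $\partial/\partial\zeta_1$ in all three lines, so one must be careful to read these as the natural one-variable derivatives). There is no deep content beyond Corollary \ref{coro:pgf-taua-taub} and a routine matrix-calculus identity; the one subtlety worth spelling out is the commutativity used at the first differentiation step, since without it the product rule for matrix inverses would produce a non-trivial obstruction.
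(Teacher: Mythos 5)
Your proposal is correct and follows essentially the same route as the paper: differentiate the joint pgf \eqref{eq:pgfCDPH0} at $(1,1,1)$, use its separated sum-over-$j\in\mathcal{S}$ structure so the mixed derivative factors coordinatewise, and reduce everything to the identity $\frac{\partial^r}{\partial \zeta^r}\,\zeta(\bm{I}-\zeta\bm{T})^{-1}=r!\,\bm{T}^{r-1}(\bm{I}-\zeta\bm{T})^{-r-1}$. The only difference is cosmetic: the paper cites the proof of Theorem 1.2.69 of \cite{bladt2017matrix} for that identity, whereas you verify it directly via the series/induction formula for $\partial_\zeta^k(\bm{I}-\zeta\bm{P})^{-1}$ and a Leibniz-rule telescoping, together with an explicit justification (spectral radius less than one) that the pgf is analytic near $1$.
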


\begin{proof}
It is standard that the \((n_0, n_1, n_2)\)-th falling factorial moment is obtained from the joint pgf \eqref{eq:pgfCDPH0} by taking \(n_i\) derivatives with respect to \(\zeta_i\) for all $i=0,1,2$ and then evaluating at \(\zeta_0 = \zeta_1 = \zeta_2 = 1\) so that
\[
\mathbb{E}\bigl(\tau_{1,2}^{[n_0]}\,(\tau_1-\tau_{1,2})^{[n_1]}\,(\tau_2-\tau_{1,2})^{[n_2]}\bigr)
= \left.\frac{\partial^{n_0+n_1+n_2}}{\partial \zeta_0^{n_0}\partial \zeta_1^{n_1}\partial \zeta_2^{n_2}}\mathbb{E}(\zeta_0^{\tau_{1,2}} \zeta_1^{\tau_1-\tau_{1,2}} \zeta_2^{\tau_2-\tau_{1,2}} )\right|_{\zeta_0=1, \zeta_1=1, \zeta_2=1}.
\]
Exploiting the linearity of differentiation yields \eqref{eq:jointmoments1} as long as one can prove the formula
\begin{align*}
\frac{\partial^r}{\partial \zeta_0^r}\zeta_0(\bm{I}-\bm{T}\zeta_0)^{-1}=r!\bm{T}^{r-1}(\bm{I}-\bm{T}\zeta_0)^{-r-1},\qquad r\in\mathbb{N}_+,
\end{align*}
where $\bm{T}$ can be \(\bm{P}\), \(\bm{Q}_1\) or \(\bm{Q}_2\). It can be seen from the proof of Theorem 1.2.69 of \cite{bladt2017matrix} that the above equation is indeed valid, concluding our proof.
\end{proof}

\begin{remark}
While Corollary \ref{coro:jointmoments1} does not explicitly provide the cross moments of \(\tau_1\) and \(\tau_2\), by writing $\tau_1=\tau_{1,2}+(\tau_1-\tau_{1,2})$ and $\tau_2=\tau_{1,2}+(\tau_2-\tau_{1,2})$ followed by applying binomial expansions of their powers, it can be readily seen that the afore-mentioned moments can be represented as linear combinations of the falling factorial moments in \eqref{eq:jointmoments1}. For instance, one finds 
\begin{align*}
\mathbb{E}\bigl(\tau_{1}^2\,\tau_{2}\bigr) = &~ \mathbb{E}(\tau_{1,2}^3) 
+ 2\mathbb{E}(\tau_{1,2}^2\,(\tau_1 - \tau_{1,2}))
+ \mathbb{E}(\tau_{1,2}\,(\tau_1 - \tau_{1,2})^2)
+ \mathbb{E}(\tau_{1,2}^2\,(\tau_2 - \tau_{1,2})) \\
& + 2\mathbb{E}(\tau_{1,2}\,(\tau_1 - \tau_{1,2})(\tau_2 - \tau_{1,2}))
+ \mathbb{E}((\tau_1 - \tau_{1,2})^2(\tau_2 - \tau_{1,2})).
\end{align*}
\end{remark}

Note that the support of \((\tau_1, \tau_2)\) is \((\mathbb{N} + 2) \times (\mathbb{N} + 2)\). For modeling purposes, it is often useful to consider discrete random variables that have a lattice different from \(\mathbb{N} + 2\). Thus, for \(k_1, k_2 \in \mathbb{R}\) and \(c_1, c_2 > 0\), one can define the vector \((N_1, N_2)\) where
\begin{align}\label{eq:N1N2Def}
N_1 := c_1 (\tau_1-2) + k_1, \qquad N_2 := c_2 (\tau_2 -2) + k_2.
\end{align}
We then say in a broad sense (see Remark \ref{RemarkCDPH}) that the bivariate random variable \((N_1, N_2)\) follows a bivariate CDPH distribution with support on \((c_1 \mathbb{N} + k_1) \times (c_2 \mathbb{N} + k_2)\) and parameters \((\bm{\alpha}, \bm{P}, \bm{U}, \bm{Q}_1, \bm{Q}_2)\). Employing Proposition \ref{th:mass-taua-taub}, we obtain the following immediate result.

\begin{corollary}
Let \((N_1, N_2)\) follow a bivariate \(\operatorname{CDPH}\) distribution with support \((c_1 \mathbb{N} + k_1) \times (c_2 \mathbb{N} + k_2)\) and parameters \((\bm{\alpha}, \bm{P}, \bm{U}, \bm{Q}_1, \bm{Q}_2)\). Then, the joint pmf of \((N_1, N_2)\) for \(x_1 \in c_1 \mathbb{N} + k_1\) and \(x_2 \in c_2 \mathbb{N} + k_2\), namely $f_{N_1, N_2}(x_1, x_2):=\mathbb{P}(N_1 = x_1,\, N_2 = x_2)$, is given by
\begin{align*}
f_{N_1, N_2}(x_1, x_2) = f_{\tau_1,\tau_2}\left( \frac{x_1 - k_1}{c_1}+ 2,\; \frac{x_2 - k_2}{c_2}+ 2 \right),
\end{align*}
where \(f_{\tau_1,\tau_2}(n_1, n_2) = \mathbb{P}(\tau_1 = n_1,\, \tau_2 = n_2)\).
\end{corollary}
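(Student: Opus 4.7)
The plan is to treat this as a straightforward bijective change of variables, so that the joint pmf of $(N_1,N_2)$ is obtained by re-expressing the event $\{N_1=x_1,N_2=x_2\}$ as an event in terms of $(\tau_1,\tau_2)$ and then invoking the already-known joint pmf $f_{\tau_1,\tau_2}$ from Proposition \ref{th:mass-taua-taub}. First I would unpack the definition \eqref{eq:N1N2Def}, namely $N_k = c_k(\tau_k-2)+k_k$ for $k=1,2$, and observe that because $c_1,c_2>0$, the map $\tau_k\mapsto c_k(\tau_k-2)+k_k$ is a strictly increasing bijection from $\mathbb{N}+2$ onto $c_k\mathbb{N}+k_k$, whose inverse is $\tau_k = (N_k-k_k)/c_k + 2$. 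In particular, for any $x_k\in c_k\mathbb{N}+k_k$, the value $(x_k-k_k)/c_k+2$ lies in $\mathbb{N}+2$, which matches the support of $\tau_k$ dictated by Proposition \ref{th:mass-taua-taub}.

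The second step is to exploit this bijection at the level of events: for $x_1\in c_1\mathbb{N}+k_1$ and $x_2\in c_2\mathbb{N}+k_2$,
\begin{align*}
\{N_1=x_1,\,N_2=x_2\} = \left\{\tau_1 = \tfrac{x_1-k_1}{c_1}+2,\; \tau_2 = \tfrac{x_2-k_2}{c_2}+2\right\}.
\end{align*}
Taking probabilities on both sides and invoking the definition $f_{\tau_1,\tau_2}(n_1,n_2)=\mathbb{P}(\tau_1=n_1,\tau_2=n_2)$ from Proposition \ref{th:mass-taua-taub} yields the claimed formula. I do not anticipate any substantive obstacle here; the only thing worth explicitly flagging is that the support restriction $x_k\in c_k\mathbb{N}+k_k$ is precisely what guarantees the preimage is a legitimate point in the support of $(\tau_1,\tau_2)$, so that the right-hand side is unambiguously defined.
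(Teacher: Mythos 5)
Your proof is correct and matches the paper's intent: the paper states this corollary as an immediate consequence of Proposition \ref{th:mass-taua-taub} without a written proof, and the intended argument is exactly your bijective change of variables from \eqref{eq:N1N2Def}. Nothing is missing.
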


\subsection{Relationships with other bivariate DPH distributions}\label{subsec:CDPHProperty}

Let us first note that the (bivariate version of) the so-called \(\operatorname{mDPH}\) class considered in \cite{bladt2023robust} is a subclass of our proposed \(\operatorname{CDPH}\). Indeed, if we let \(\bm{\beta}\) be a probability row vector and \(\bm{Q}\) a subprobability matrix, then choosing the parameters \(\bm{\alpha} = (1)\), \(\bm{P} = (0)\), \(\bm{U} = \bm{\beta}\), and \(\bm{Q}_1 = \bm{Q}_2 = \bm{Q}\) for a \(\operatorname{CDPH}\) over the lattice \((\mathbb{N} + 1)\times(\mathbb{N} + 1)\) (i.e. $c_1=c_2=1$ and $k_1=k_2=-1$) yields an \(\operatorname{mDPH}\) distribution. This distribution arises from starting two terminating processes in the same state according to the initial probability vector \(\bm{\beta}\) then and evolving them independently from there onwards according to the subprobability matrix \(\bm{Q}\).

In turn, the \(\operatorname{CDPH}\) with support in \((\mathbb{N}+2) \times (\mathbb{N}+2)\) forms a subclass of the \(\operatorname{MDPH}^*\) class introduced in Chapter 5.2 of \cite{navarro2019order}. These distributions accumulate integer-valued rewards across each occupation epoch within a shared Markov chain, rather than bifurcating into independent processes after a common shock component concludes. Specifically, let us consider a partitioned state-space \(\mathcal{E} \cup (\mathcal{S} \times \mathcal{S}) \cup (\{\partial_1\} \times \mathcal{S}) \cup (\mathcal{S} \times \{\partial_2\})\) and a Markov chain \(\gamma = \{\gamma(n)\}_{n \in \mathbb{N}}\) defined over it. The chain evolves within \(\mathcal{E}\) while the common shock is ongoing, then jumps to \(\mathcal{S} \times \mathcal{S}\) (here we take the lexicographic order of \(\mathcal{S} \times \mathcal{S}\)) while the independent components are active, and finally either terminates, jumps to \(\{\partial_1\} \times \mathcal{S}\), or jumps to \(\mathcal{S} \times \{\partial_2\}\). These transitions occur depending on whether the independent components terminate at the same time, the first coordinate terminates before the second, or the second coordinate terminates before the first. Thus, we define the bivariate random vector \((\tau_1,\tau_2)\) as
\begin{align}\label{eq:tau_as_Kulkarni}
\tau_1:=\sum_{\ell=0}^{\sigma-1}\mathds{1}_{\gamma(\ell)\in \mathcal{E}\cup(\mathcal{S}\times\mathcal{S})\cup(\mathcal{S}\times\{\partial_2\})},\qquad 
\tau_2:=\sum_{\ell=0}^{\sigma-1}\mathds{1}_{\gamma(\ell)\in \mathcal{E}\cup(\mathcal{S}\times\mathcal{S})\cup(\{\partial_1\} \times \mathcal{S})},
\end{align}
where $\sigma$ corresponds to the termination time of $\gamma$ so that $\sigma:=\inf\{ n \in \mathbb{N}_+ : \gamma(n) \notin \mathcal{E} \cup (\mathcal{S} \times \mathcal{S}) \cup (\{\partial_1\} \times \mathcal{S}) \cup (\mathcal{S} \times \{\partial_2\}) \}$. The above representation of the bivariate vector \((\tau_1,\tau_2)\) is already in the form of Equation (5.5) of \cite{navarro2019order}. It remains to specify the transition probability of the terminating Markov chain $\gamma$.

To mimic the distributional properties of the original processes \(M_1\) and \(M_2\) embedded in the Markov chain \(\gamma\), we take the block-partitioned parameters \((\bm{\alpha},\bm{0},\bm{0},\bm{0})\), and the subprobability matrix
\begin{align}\label{eq:maximum-transition}
\bm{P}_{\mathrm{max}}:= \begin{pmatrix}
\bm{P} & \bm{U}^* & \bm{0} & \bm{0}\\
\bm{0} &  \bm{Q}_1 \otimes \bm{Q}_2 & \bm{q}_1 \otimes \bm{Q}_2 & \bm{Q}_1 \otimes \bm{q}_2 \\ 
\bm{0} & \bm{0} & \bm{Q}_2 & \bm{0}\\
\bm{0} & \bm{0} & \bm{0} & \bm{Q}_1
\end{pmatrix},
\end{align}
where \(\bm{U}^* = \{u_{i,(j_1,j_2)}^*\}_{i \in \mathcal{E}, (j_1,j_2) \in \mathcal{S} \times \mathcal{S}}\) is defined by \(u_{i,(j_1,j_2)}^* := u_{ij_1} \delta_{j_1,j_2}\), with \(\delta_{j_1,j_2}\) denoting the Kronecker delta and $u_{ij}$ the $(i,j)$-th element of $\bm{U}$, and \(\otimes\) represents the Kronecker product between matrices. Then, $\sigma$ is $\operatorname{DPH}((\bm{\alpha},\bm{0},\bm{0},\bm{0}),\bm{P}_{\mathrm{max}})$. Note that the resulting parameters are similar to those used to compute the distribution of the maximum between two discrete phase-type distributions \cite[Theorem 1.2.67]{bladt2017matrix}. The derivation of these parameters for the process \(\gamma\) is as follows:
\begin{itemize}
    \item The process evolves in \(\mathcal{E}\) according to \(\bm{P}\), mimicking the common shock component \(\tau_{1,2}\), eventually jumping from a state \(i \in \mathcal{E}\) to a state \((j,j) \in \mathcal{S} \times \mathcal{S}\) with probability \(u_{ij}\).
    \item It then evolves in \(\mathcal{S} \times \mathcal{S}\) according to \(\bm{Q}_1 \otimes \bm{Q}_2\) such that a transition from \((i_1, i_2) \in \mathcal{S} \times \mathcal{S}\) to \((j_1, j_2) \in \mathcal{S} \times \mathcal{S}\) occurs with probability 
    $q_{1,i_1j_1}\,q_{2,i_2j_2}$ (where, for $k=1,2$, we use $q_{k,ij}$ to denote the $(i,j)$-th element of $\bm{Q}_k$). Then, either both components terminate simultaneously according to the exit column vector \(\bm{q}_1 \otimes \bm{q}_2\); the first terminates while the second remains active, moving to \(\{\partial_1\} \times \mathcal{S}\); or the second terminates while the first remains active, moving to \(\mathcal{S} \times \{\partial_2\}\).
    \item If the process jumps to \(\{\partial_1\} \times \mathcal{S}\) or \(\mathcal{S} \times \{\partial_2\}\), it continues evolving according to the matrix \(\bm{Q}_2\) or \(\bm{Q}_1\), respectively.
\end{itemize}

The above results are summarized in the following proposition.
\begin{proposition}\label{th:ClassProperty}
Concerning bivariate $\operatorname{DPH}$ distributions, our proposed $\operatorname{CDPH}$ class satisfies the relationships
\begin{align*}
&{\normalfont \text{mDPH $\subset$ CDPH (with support $(\mathbb{N}+1) \times (\mathbb{N}+1)$)}},\\
&{\normalfont \text{CDPH (with support $(\mathbb{N}+2) \times (\mathbb{N}+2)$) $\subset$ $\operatorname{MDPH}^*$}},
\end{align*}
where $\operatorname{mDPH}$ is the class introduced by \cite{bladt2023robust} and $\operatorname{MDPH}^*$ is the class introduced by \cite{navarro2019order}.
\end{proposition}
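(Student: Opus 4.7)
The statement comprises two inclusions, and both have already been motivated in the surrounding exposition; the task of the proof is to verify that the proposed parameter reductions and Markov-chain constructions actually yield the claimed joint distributions. My plan is to handle the two inclusions separately, using Proposition \ref{th:mass-taua-taub} as the common computational workhorse.

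For the inclusion $\operatorname{mDPH} \subset \operatorname{CDPH}$, I would proceed by direct substitution. Fix a row vector $\bm{\beta}$ and a subprobability matrix $\bm{Q}$ over $\mathcal{S}$, and set $\bm{\alpha}=(1)$, $\bm{P}=(0)$, $\bm{U}=\bm{\beta}$, $\bm{Q}_1=\bm{Q}_2=\bm{Q}$. Plugging into \eqref{eq:psi1}, the scalar $\bm{P}$ satisfies $\bm{P}^{m-1}=0$ for all $m\ge 2$, so only the term $m=1$ survives in \eqref{eq:phi1}, yielding
\begin{align*}
f_{\tau_1,\tau_2}(n_1,n_2) \;=\; \bm{\beta}\!\left((\bm{Q}^{n_1-2}\bm{q})\odot(\bm{Q}^{n_2-2}\bm{q})\right),\qquad n_1,n_2\ge 2,
\end{align*}
with $\bm{q}=\bm{1}-\bm{Q}\bm{1}$. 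After the lattice translation with $c_1=c_2=1$ and $k_1=k_2=-1$ described in \eqref{eq:N1N2Def}, the support becomes $(\mathbb{N}+1)\times(\mathbb{N}+1)$ and the pmf matches the mDPH joint pmf of \cite{bladt2023robust} verbatim. This step is essentially bookkeeping and should pose no difficulty.

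For the inclusion $\operatorname{CDPH} \subset \operatorname{MDPH}^*$, I would verify that the chain $\gamma$ with initial distribution $(\bm{\alpha},\bm{0},\bm{0},\bm{0})$ and subprobability matrix $\bm{P}_{\mathrm{max}}$ from \eqref{eq:maximum-transition}, combined with the reward definition \eqref{eq:tau_as_Kulkarni}, reproduces the CDPH joint law. My approach is a pathwise coupling: partition the trajectory of $\gamma$ into three consecutive segments corresponding to its sojourn in $\mathcal{E}$, its sojourn in $\mathcal{S}\times\mathcal{S}$, and its final sojourn in either $\{\partial_1\}\times\mathcal{S}$ or $\mathcal{S}\times\{\partial_2\}$. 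On the first segment, $\gamma$ moves within $\mathcal{E}$ according to $\bm{P}$ exactly as the common portion of $(M_1,M_2)$ does; the transition to $\mathcal{S}\times\mathcal{S}$ is governed by $\bm{U}^*$, whose Kronecker-delta structure forces the two coordinates to enter $\mathcal{S}$ at the common state prescribed by $\bm{U}$. On the second segment, the Kronecker product $\bm{Q}_1\otimes\bm{Q}_2$ encodes two independent copies of $M_1$ and $M_2$ running in parallel within $\mathcal{S}$, with joint exit probabilities $\bm{q}_1\otimes\bm{q}_2$, $\bm{q}_1\otimes\bm{Q}_2$ and $\bm{Q}_1\otimes\bm{q}_2$ correctly splitting the event that both, only $M_1$, or only $M_2$ terminates. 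Finally, on the third segment the dynamics are governed by $\bm{Q}_2$ or $\bm{Q}_1$ alone, matching the independent evolution of the surviving chain. Under this coupling, $\tau_1$ and $\tau_2$ defined via \eqref{eq:tau_as_Kulkarni} coincide almost surely with the termination times of $M_1$ and $M_2$, so the distributions agree.

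The main obstacle lies in the second inclusion: one must carefully check that the Kronecker block in $\bm{P}_{\mathrm{max}}$ truly reproduces two independent $\bm{Q}_k$-chains with the correct termination semantics, in particular that the three exit avenues $\bm{q}_1\otimes\bm{q}_2$, $\bm{q}_1\otimes\bm{Q}_2$, $\bm{Q}_1\otimes\bm{q}_2$ partition the one-step mass from $\mathcal{S}\times\mathcal{S}$ appropriately. To make the argument airtight without invoking coupling, an alternative is to compute the joint pgf $\mathbb{E}(\zeta_1^{\tau_1}\zeta_2^{\tau_2})$ under the $\gamma$-construction by successive conditioning on the three segments and matching it to \eqref{eq:pgfCDPH}; the identity $(\bm{I}-\zeta_1\zeta_2(\bm{Q}_1\otimes\bm{Q}_2))^{-1}=\ldots$ can be simplified componentwise via the $\mathcal{S}$-indexed sum in \eqref{eq:pgfCDPH}, and this algebraic route offers a cleaner, pgf-based verification in case any subtlety in the coupling argument resists direct proof.
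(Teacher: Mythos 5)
Your proposal is correct and takes essentially the same route as the paper: the first inclusion is the identical parameter specialization $\bm{\alpha}=(1)$, $\bm{P}=(0)$, $\bm{U}=\bm{\beta}$, $\bm{Q}_1=\bm{Q}_2=\bm{Q}$ (which you simply verify more explicitly through the pmf formula of Proposition \ref{th:mass-taua-taub}), and the second inclusion uses exactly the paper's chain $\gamma$ with transition matrix \eqref{eq:maximum-transition} and reward representation \eqref{eq:tau_as_Kulkarni}, justified by the same segment-by-segment matching of the dynamics of $(M_1,M_2)$. No gaps beyond the shift-convention detail you inherit from the text itself.
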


It is important to emphasize that, although the $\operatorname{MDPH}^*$ class in \cite{navarro2019order} is a larger class than our CDPH class on  $(\mathbb{N}+2) \times (\mathbb{N}+2)$, a CDPH distributed bivariate random vector admits explicit joint pmf as in Proposition \ref{th:mass-taua-taub} while the same is not true for the $\operatorname{MDPH}^*$ class.

\begin{remark}\label{RemarkCDPH}
Since the general \(\operatorname{CDPH}\) distributed random vector \((N_1, N_2)\) in \eqref{eq:N1N2Def} with support on \((c_1 \mathbb{N} + k_1) \times (c_2 \mathbb{N} + k_2)\) is merely linear transformation of its basic version \((\tau_1, \tau_2)\) which has support on \((\mathbb{N}+2) \times (\mathbb{N}+2)\), from now we shall simply use $\operatorname{CDPH}$ to denote the class with support on \((\mathbb{N}+2) \times (\mathbb{N}+2)\), and say that \((\tau_1, \tau_2)\) follows a $\operatorname{CDPH}(\bfalp, \bm{P}, \bm{U}, \bm{Q}_1, \bm{Q}_2)$ distribution.
\end{remark}

\subsection{Further distributional and closure properties}\label{subsec:Operations}

Below, we explore further properties of the $\operatorname{CDPH}$ class. In particular, we investigate certain operations on the vector $(\tau_1, \tau_2)$ that belong to the class of $\operatorname{DPH}$ or $\operatorname{CDPH}$ distributions.

\subsubsection*{\emph{Minimum and maximum between $\tau_1$ and $\tau_2$}}
Consider again the Markov chain $\gamma$ driven by \eqref{eq:maximum-transition}, with initial vector \((\bm{\alpha}, \bm{0}, \bm{0}, \bm{0})\), state-space \(\mathcal{E} \cup (\mathcal{S} \times \mathcal{S}) \cup (\{\partial_1\} \times \mathcal{S}) \cup (\mathcal{S} \times \{\partial_2\})\) and termination time $\sigma$. One can easily see that $\sigma$ indeed corresponds to the maximum $\tau_{\max}:=\max\{\tau_1, \tau_2\}$ of $\tau_1$ and $\tau_2$. Moreover, the minimum $\tau_{\min}:=\min\{\tau_1, \tau_2\}$ is simply the exit time of the Markov chain $\gamma$ from the set $\mathcal{E} \cup (\mathcal{S} \times \mathcal{S})$. Defining the matrix
\begin{align}\label{eq:minimum-transition}
\bm{P}_{\mathrm{min}}:= \begin{pmatrix}
\bm{P} & \bm{U}^* \\
\bm{0} &  \bm{Q}_1 \otimes \bm{Q}_2 \\ 
\end{pmatrix},
\end{align}
the next proposition follows as a direct consequence.
\begin{proposition}
Given a $\operatorname{CDPH}(\bfalp, \bm{P}, \bm{U}, \bm{Q}_1, \bm{Q}_2)$ random vector $(\tau_1,\tau_2)$, the maximum $\tau_{\max}$ follows a $\operatorname{DPH}((\bm{\alpha},\bm{0},\bm{0},\bm{0}),\bm{P}_{\mathrm{max}})$ distribution and the minimum $\tau_{\min}$ follows a \linebreak $\operatorname{DPH}((\bm{\alpha},\bm{0}),\bm{P}_{\mathrm{min}})$ distribution, where $\bm{P}_{\mathrm{max}}$ and $\bm{P}_{\mathrm{min}}$ are given by \eqref{eq:maximum-transition} and \eqref{eq:minimum-transition}, respectively.
\end{proposition}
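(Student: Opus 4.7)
The plan is to make rigorous the coupling sketched in the discussion preceding Equation~\eqref{eq:maximum-transition}: I will construct a Markov chain $\gamma$ with transition matrix $\bm{P}_{\mathrm{max}}$ on the same probability space as $(M_1,M_2)$, in such a way that $\gamma$ terminates exactly at $\tau_{\max}$ and leaves the first two block-rows of its state space exactly at $\tau_{\min}$. Once such a coupling is in place, the DPH representations follow immediately from the definition given in Section~\ref{subsec:DPHDef}, applied either to $\gamma$ itself (for the maximum) or to $\gamma$ stopped upon leaving $\mathcal{E}\cup(\mathcal{S}\times\mathcal{S})$ (for the minimum).

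I would define $\gamma(n)$ piecewise in terms of the phase $n$ and the status of $M_1,M_2$: for $n<\tau_{1,2}$, set $\gamma(n)=M_1(n)=M_2(n)\in\mathcal{E}$; for $\tau_{1,2}\le n<\tau_{\min}$, set $\gamma(n)=(M_1(n),M_2(n))\in\mathcal{S}\times\mathcal{S}$; for $\tau_{\min}\le n<\tau_{\max}$, set $\gamma(n)=(\partial_1,M_2(n))$ if $\tau_1<\tau_2$ or $(M_1(n),\partial_2)$ if $\tau_2<\tau_1$ (and have $\gamma$ terminate if $\tau_1=\tau_2$). Starting from $(\bm{\alpha},\bm{0},\bm{0},\bm{0})$, I would then verify that the induced one-step transitions of $\gamma$ match, block by block, the submatrices in $\bm{P}_{\mathrm{max}}$: within $\mathcal{E}$ the transition probabilities are inherited directly from $\bm{P}$; the jump from $i\in\mathcal{E}$ to the diagonal $(j,j)\in\mathcal{S}\times\mathcal{S}$ occurs with probability $u_{ij}$, which is exactly what $\bm{U}^*$ encodes via the Kronecker delta; while in $\mathcal{S}\times\mathcal{S}$ the two coordinates evolve independently, producing the product block $\bm{Q}_1\otimes\bm{Q}_2$ together with exit probabilities $\bm{q}_1\otimes\bm{Q}_2$ (first coordinate terminates), $\bm{Q}_1\otimes\bm{q}_2$ (second coordinate terminates), and $\bm{q}_1\otimes\bm{q}_2$ (simultaneous termination, absorbed directly); once in $\{\partial_1\}\times\mathcal{S}$ or $\mathcal{S}\times\{\partial_2\}$, only the surviving coordinate evolves, giving the diagonal blocks $\bm{Q}_2$ and $\bm{Q}_1$ respectively. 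Since by construction $\gamma$ is absorbed precisely when both $M_1$ and $M_2$ have terminated, the termination time $\sigma$ of $\gamma$ coincides almost surely with $\tau_{\max}$, proving the first claim.

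For the minimum, observe that under the same coupling, $\gamma$ leaves the set $\mathcal{E}\cup(\mathcal{S}\times\mathcal{S})$ at exactly the first epoch at which either $M_1$ or $M_2$ terminates (whether the exit is to $\{\partial_1\}\times\mathcal{S}$, $\mathcal{S}\times\{\partial_2\}$, or to the cemetery directly via $\bm{q}_1\otimes\bm{q}_2$). Consequently, $\tau_{\min}$ coincides with the termination time of the restricted chain whose transition submatrix is $\bm{P}_{\mathrm{min}}$ in \eqref{eq:minimum-transition}, with cemetery-row probability $\bm{1}-\bm{P}_{\mathrm{min}}\bm{1}$ accounting for all three exit possibilities collectively. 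The initial distribution inherited on the reduced state space $\mathcal{E}\cup(\mathcal{S}\times\mathcal{S})$ is $(\bm{\alpha},\bm{0})$, giving the second claim.

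The main obstacle is purely verificatory: checking that the exit column from $\mathcal{S}\times\mathcal{S}$ in $\bm{P}_{\mathrm{max}}$ is consistent, that is, $\bm{1}-\bm{P}_{\mathrm{max}}\bm{1}$ restricted to that block equals $\bm{q}_1\otimes\bm{q}_2$. This reduces to the identity $(\bm{Q}_1\otimes\bm{Q}_2+\bm{q}_1\otimes\bm{Q}_2+\bm{Q}_1\otimes\bm{q}_2)\bm{1}=\bm{1}-\bm{q}_1\otimes\bm{q}_2$, which follows from the mixed-product property of the Kronecker product together with $\bm{Q}_k\bm{1}+\bm{q}_k=\bm{1}$ for $k=1,2$. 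Apart from this elementary bookkeeping, the argument is essentially a direct readout of the coupling, so no further machinery is required.
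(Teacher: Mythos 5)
Your proof is correct and follows essentially the same route as the paper: the paper constructs the auxiliary chain $\gamma$ with transition matrix $\bm{P}_{\mathrm{max}}$ in Subsection~\ref{subsec:CDPHProperty}, observing that its termination time is $\tau_{\max}$ and its exit time from $\mathcal{E}\cup(\mathcal{S}\times\mathcal{S})$ is $\tau_{\min}$, which is exactly the coupling you make explicit. Your block-by-block verification (including the row-sum check identifying the exit vector $\bm{q}_1\otimes\bm{q}_2$) merely spells out details the paper leaves as ``a direct consequence.''
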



\subsubsection*{\emph{Aggregation of coordinates \(\tau_1 + \tau_2\)}}
When considering the sum of the coordinates of $(\tau_1,\tau_2)$, it is helpful to represent it as $\tau_1 + \tau_2 = 2\min\{\tau_1, \tau_2\} + (\max\{\tau_1, \tau_2\} - \min\{\tau_1, \tau_2\})$. That is, $\tau_1 + \tau_2$ counts twice the steps in which both $\tau_1$ and $\tau_2$ have not terminated, and then adds the steps during which the larger value exceeds the smaller one. Consequently, we have
\begin{align}\label{eq:aggregation}
\tau_1+\tau_2 = \sum_{\ell=0}^{\sigma-1}\left(2 \cdot \mathds{1}_{\gamma(\ell) \in \mathcal{E} \cup (\mathcal{S} \times \mathcal{S})} + \mathds{1}_{\gamma(\ell) \in (\{\partial_1\} \times \mathcal{S}) \cup (\mathcal{S} \times \{\partial_2\})}\right).
\end{align}
We have the following proposition.
\begin{proposition}
Given a $\operatorname{CDPH}(\bfalp, \bm{P}, \bm{U}, \bm{Q}_1, \bm{Q}_2)$ random vector $(\tau_1,\tau_2)$, the sum $\tau_1+\tau_2$ follows a $\operatorname{DPH}$ distribution.
\end{proposition}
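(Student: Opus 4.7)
The plan is to exploit the representation \eqref{eq:aggregation} and recognize $\tau_1+\tau_2$ as an integer-valued additive functional of the finite-state terminating Markov chain $\gamma$ with subprobability matrix $\bm{P}_{\mathrm{max}}$ and initial vector $(\bm{\alpha},\bm{0},\bm{0},\bm{0})$: at each step, the reward accrued is $2$ when $\gamma(\ell) \in \mathcal{E} \cup (\mathcal{S}\times\mathcal{S})$ and $1$ when $\gamma(\ell) \in (\{\partial_1\}\times\mathcal{S}) \cup (\mathcal{S}\times\{\partial_2\})$. It therefore suffices to construct a finite, terminating Markov chain $\tilde{\gamma}$ whose absorption time is almost surely equal to this accumulated integer reward; the claim will then follow directly from the definition of DPH.

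To realize the integer reward as a literal number of steps, I would apply the standard phase-splitting trick. Each state $s$ carrying reward $2$ (that is, each $s \in \mathcal{E} \cup (\mathcal{S}\times\mathcal{S})$) is replaced by two auxiliary phases $s^{(1)}$ and $s^{(2)}$, connected by a deterministic transition $s^{(1)}\to s^{(2)}$, and the outgoing transitions of $\gamma$ originally leaving $s$ are reassigned to $s^{(2)}$. States with reward $1$ are left unchanged. The initial vector of $\tilde{\gamma}$ inherits $\bm{\alpha}$ on the first copies of the states in $\mathcal{E}$ and is zero elsewhere. Written in block form over the six groups $\mathcal{E}^{(1)}, \mathcal{E}^{(2)}, (\mathcal{S}\times\mathcal{S})^{(1)}, (\mathcal{S}\times\mathcal{S})^{(2)}, \{\partial_1\}\times\mathcal{S}, \mathcal{S}\times\{\partial_2\}$, the enlarged subprobability matrix $\tilde{\bm{P}}$ has identity blocks carrying the deterministic $(1)\to(2)$ transitions, while the rows of $\bm{P}_{\mathrm{max}}$ appear on the second copies of the doubled blocks and on the unreplicated blocks.

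By construction, each step of $\gamma$ at a reward-$2$ state corresponds to exactly two steps of $\tilde{\gamma}$, each step at a reward-$1$ state corresponds to exactly one step of $\tilde{\gamma}$, and absorption can only occur from a second copy or an unreplicated state, so no spurious shortcut is possible. Combined with \eqref{eq:aggregation}, this yields that the absorption time of $\tilde{\gamma}$ is almost surely equal to $\tau_1+\tau_2$. Since $\tilde{\gamma}$ evolves on a finite state space and terminates almost surely (as $\gamma$ does), its absorption time is DPH by definition, with parameters given by the enlarged initial vector and $\tilde{\bm{P}}$. The main obstacle is the bookkeeping needed to write $\tilde{\bm{P}}$ block-by-block and verify that its exit column vector $\bm{1}-\tilde{\bm{P}}\bm{1}$ matches the exit probabilities of $\bm{P}_{\mathrm{max}}$ on the correct copies; this step is routine but is the only point that requires care.
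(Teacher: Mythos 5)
Your proof is correct, and it rests on the same key observation as the paper: via \eqref{eq:aggregation}, $\tau_1+\tau_2$ is an integer-reward additive functional of the terminating chain $\gamma$ driven by $\bm{P}_{\mathrm{max}}$, with reward $2$ on $\mathcal{E}\cup(\mathcal{S}\times\mathcal{S})$ and reward $1$ on $(\{\partial_1\}\times\mathcal{S})\cup(\mathcal{S}\times\{\partial_2\})$. Where you differ is in how this reward functional is converted into an absorption time: the paper invokes Theorem 5.2 of \cite{navarro2019order}, which only gives that $\tau_1+\tau_2$ is a mixture of an atom at zero and a $\operatorname{DPH}$ law, and must then argue separately that the atom vanishes because $\tau_1,\tau_2\ge 1$, while deferring the parameter construction to Chapter 5.1.1.2 of that reference. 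You instead carry out the conversion explicitly by phase splitting—duplicating each reward-$2$ state into two phases joined by a forced transition and leaving reward-$1$ states intact—so that the absorption time of the enlarged chain equals the accumulated reward pathwise. This makes the proof self-contained and produces explicit $\operatorname{DPH}$ parameters, and since all rewards in this application are strictly positive, the atom-at-zero issue the paper has to dispose of never arises in your argument; the price is the block-level bookkeeping for the enlarged subprobability matrix and its exit vector, which, as you note, is routine.
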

\begin{proof}
With the representation \eqref{eq:aggregation} for the random variable $\tau_1+\tau_2$, it follows from Theorem 5.2 in \cite{navarro2019order} that the distribution of $\tau_1+\tau_2$ is a mixture of a probability mass at zero and a $\operatorname{DPH}$ distribution. However, both $\tau_1$ and $\tau_2$ are at least one from their definitions, implying that the afore-mentioned probability mass at zero must be non-existent and hence $\tau_1+\tau_2$ follows a $\operatorname{DPH}$ distribution. The construction of the parameters of such a $\operatorname{DPH}$ distribution can be found in Chapter 5.1.1.2 in \cite{navarro2019order} and is omitted for brevity.
\end{proof}

\subsubsection*{\emph{Mixture of CDPH random vectors}}
For each $i\in \{1, 2, \dots, m\}$, suppose that the random vector $(\tau_1(i),\tau_2(i))$ is CDPH distributed. Let a mixture be formed where the component $(\tau_1(i),\tau_2(i))$ is chosen with probability $w_i > 0$. The following proposition shows that the class of CDPH distributions is closed under finite mixture.
\begin{proposition}
Suppose that a mixture is formed such that a bivariate random vector $(\tau_{1,\mathrm{mix}},\tau_{2,\mathrm{mix}})$ is chosen to be $(\tau_1(i),\tau_2(i))$ with probability $w_i > 0$, such that $(\tau_1(i),\tau_2(i))$ is $\operatorname{CDPH}(\bfalp(i), \bm{P}(i), \bm{U}(i), \bm{Q}_1(i), \bm{Q}_2(i))$ distributed and $\sum_{i=1}^m w_i = 1$. Then, $(\tau_{1,\mathrm{mix}},\tau_{2,\mathrm{mix}})$ follows a $\operatorname{CDPH}(\bfalp_{\mathrm{mix}}, \bm{P}_{\mathrm{mix}}, \bm{U}_{\mathrm{mix}}, \bm{Q}_{1,\mathrm{mix}}, \bm{Q}_{2,\mathrm{mix}})$ distribution, where the parameters are given by
\[
\bfalp_{\mathrm{mix}} := \begin{pmatrix}
w_1 \bfalp(1) & w_2 \bfalp(2) & \cdots & w_m \bfalp(m)
\end{pmatrix},
\]
\[
\bm{P}_{\mathrm{mix}} := \begin{pmatrix}
\bm{P}(1) & \bm{0} & \cdots & \bm{0} \\
\bm{0} & \bm{P}(2) & \cdots & \bm{0} \\
\vdots & \vdots & \ddots & \vdots \\
\bm{0} & \bm{0} & \cdots & \bm{P}(m)
\end{pmatrix},
\quad
\bm{U}_{\mathrm{mix}} := \begin{pmatrix}
\bm{U}(1) & \bm{0} & \cdots & \bm{0} \\
\bm{0} & \bm{U}(2) & \cdots & \bm{0} \\
\vdots & \vdots & \ddots & \vdots \\
\bm{0} & \bm{0} & \cdots & \bm{U}(m)
\end{pmatrix},
\]
\[
\bm{Q}_{1,\mathrm{mix}} := \begin{pmatrix}
\bm{Q}_1(1) & \bm{0} & \cdots & \bm{0} \\
\bm{0} & \bm{Q}_1(2) & \cdots & \bm{0} \\
\vdots & \vdots & \ddots & \vdots \\
\bm{0} & \bm{0} & \cdots & \bm{Q}_1(m)
\end{pmatrix},
\quad
\bm{Q}_{2,\mathrm{mix}} := \begin{pmatrix}
\bm{Q}_2(1) & \bm{0} & \cdots & \bm{0} \\
\bm{0} & \bm{Q}_2(2) & \cdots & \bm{0} \\
\vdots & \vdots & \ddots & \vdots \\
\bm{0} & \bm{0} & \cdots & \bm{Q}_2(m)
\end{pmatrix}.
\]
\end{proposition}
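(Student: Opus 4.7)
The plan is to verify the claim either through a direct probabilistic construction or by plugging the mixed parameters into the joint pmf formula of Proposition \ref{th:mass-taua-taub}. Both approaches rest on the same key observation: the parameters $\bm{P}_{\mathrm{mix}}, \bm{U}_{\mathrm{mix}}, \bm{Q}_{1,\mathrm{mix}}, \bm{Q}_{2,\mathrm{mix}}$ are all (rectangular) block-diagonal, so the dynamics decouple into independent copies, one per component.

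The probabilistic approach I would take goes as follows. Reading $\bfalp_{\mathrm{mix}} = (w_1 \bfalp(1), \ldots, w_m \bfalp(m))$ as a mixture, the starting state of the CDPH chain lies in the $i$-th block with probability $w_i$ and, conditional on this event, is distributed as $\bfalp(i)$. Because $\bm{P}_{\mathrm{mix}}$ has no off-diagonal blocks, the chain governing the common-shock phase cannot leave its starting block. The matrix $\bm{U}_{\mathrm{mix}}$ sends the $i$-th block of $\mathcal{E}$ only into the $i$-th block of $\mathcal{S}$, and $\bm{Q}_{1,\mathrm{mix}}$, $\bm{Q}_{2,\mathrm{mix}}$ are likewise block-diagonal, so each of the two marginal chains remains trapped in the $i$-th block until termination. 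Consequently, conditional on the block $i$ selected at step zero, the joint law of $(\tau_{1,\mathrm{mix}}, \tau_{2,\mathrm{mix}})$ is exactly that of $(\tau_1(i), \tau_2(i))$, and unconditionally it is the desired mixture.

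Alternatively, one can verify the claim algebraically by noting that block-diagonality is preserved by matrix product and by powers, so $\bm{P}_{\mathrm{mix}}^{m-1}\bm{U}_{\mathrm{mix}}$ has $i$-th diagonal block $\bm{P}(i)^{m-1}\bm{U}(i)$; the termination vectors $\bm{q}_{k,\mathrm{mix}} = \bm{1} - \bm{Q}_{k,\mathrm{mix}}\bm{1}$ stack the $\bm{q}_k(i)$'s for $k=1,2$; and the Hadamard product of two column vectors sharing a common partition is itself partitioned block-wise. Substituting these observations into \eqref{eq:psi1} yields $\sum_{i=1}^m w_i \bfalp(i) \bm{P}(i)^{m-1}\bm{U}(i) ((\bm{Q}_1(i)^{z_1-1}\bm{q}_1(i)) \odot (\bm{Q}_2(i)^{z_2-1}\bm{q}_2(i)))$, which then sums over $m$ via \eqref{eq:phi1} to the mixture pmf, matching the law of $(\tau_{1,\mathrm{mix}}, \tau_{2,\mathrm{mix}})$ by definition. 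I do not anticipate any serious obstacle; the whole argument is bookkeeping about how block-diagonal structure interacts with matrix multiplication, powers, and entrywise products.
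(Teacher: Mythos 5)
Your proposal is correct and its main (probabilistic) argument is essentially the paper's own proof: the block-diagonal structure of the mixed parameters means the chain never leaves the block chosen at time zero, so conditional on block $i$ the law is that of $(\tau_1(i),\tau_2(i))$ and unconditionally the mixture results. The additional algebraic verification via the pmf formula is a fine supplementary check (just note the clash of $m$ as both the number of components and the power index), but it goes beyond what the paper does rather than differing from it.
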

\begin{proof}
Indeed, the initial distribution \(\bfalp_{\mathrm{mix}}\) is simply a weighted combination of the initial probabilities \(\{\bfalp(i)\}_{i=1}^m\) of the component CDPH distributions, with weight \(w_i\) assigned to $\bfalp_i$. The transition matrix \(\bm{P}_{\mathrm{mix}}\) is a block-diagonal matrix, where each block corresponds to the transition matrix \(\bm{P}(i)\) of the \(i\)-th component. Similarly, \(\bm{U}_{\mathrm{mix}}, \bm{Q}_{1,\mathrm{mix}},\) and \(\bm{Q}_{2,\mathrm{mix}}\) are block-diagonal matrices, where each block corresponds to \(\bm{U}(i)\), \(\bm{Q}_1(i)\), and \(\bm{Q}_2(i)\), respectively. Such block-diagonal structure ensures that there is no interaction among the components (i.e. $(\tau_1(i),\tau_2(i))$ across different $i$'s) once a specific component is chosen according to the mixing weights \(\{w_i\}_{i=1}^m\). This construction guarantees that the resulting parameters describe a valid CDPH distribution for $(\tau_{1,\mathrm{mix}},\tau_{2,\mathrm{mix}})$.
\end{proof}

\subsubsection*{\emph{Sum of CDPH random vectors}}

Suppose that the random vector $(\tau_1(i),\tau_2(i))$ is CDPH distributed for each $i\in \{1, 2, \dots, m\}$, and the random vectors $\{(\tau_1(i),\tau_2(i))\}_{i=1}^m$ are independent. Then one could be interested in the joint distribution of the sum $(\tau_{1,\mathrm{sum}},\tau_{2,\mathrm{sum}}):=(\sum_{i=1}^m\tau_1(i),\sum_{i=1}^m\tau_2(i))$. Indeed, this follows a CDPH distribution. In the next proposition, we provide a proof for the case $m=2$ and explicitly state the resulting parameters. 
\begin{proposition}\label{prop:sumCDPH}
Consider the bivariate random vector $(\tau_{1,\mathrm{sum}},\tau_{2,\mathrm{sum}})=(\tau_1(1)+\tau_1(2),\tau_2(1)+\tau_2(2))$, where $(\tau_1(i),\tau_2(i))$ is $\operatorname{CDPH}(\bfalp(i), \bm{P}(i), \bm{U}(i), \bm{Q}_1(i), \bm{Q}_2(i))$ distributed for $i\in\{1,2\}$, and the random vectors $(\tau_1(1),\tau_2(1))$  and $(\tau_1(2),\tau_2(2))$ are independent. Then, $(\tau_{1,\mathrm{sum}},\tau_{2,\mathrm{sum}})$ follows a $\operatorname{CDPH}(\bfalp_{\mathrm{sum}}, \bm{P}_{\mathrm{sum}}, \bm{U}_{\mathrm{sum}}, \bm{Q}_{1,\mathrm{sum}}, \bm{Q}_{2,\mathrm{sum}})$ distribution, where the parameters are given by
\begin{align}\label{eq:alphasum}
\bfalp_{\mathrm{sum}} := (\bfalp(1),\bm{0}),
\end{align}
\begin{align}\label{eq:PUsum}
\bm{P}_{\mathrm{sum}} := \begin{pmatrix}
\bm{P}(1) & \bm{U}(1) \otimes \bfalp(2) \\
\bm{0} & \bm{I} \otimes \bm{P}(2)
\end{pmatrix},
\quad
\bm{U}_{\mathrm{sum}} := \begin{pmatrix}
\bm{0} & \bm{0} \\
\bm{I} \otimes \bm{U}(2) & \bm{0}
\end{pmatrix},
\end{align}
\begin{align}\label{eq:Q1Q2sum}
\bm{Q}_{1,\mathrm{sum}} := \begin{pmatrix}
\bm{Q}_1(1) \otimes \bm{I} & \bm{q}_1(1) \otimes \bm{I} \\
\bm{0} & \bm{Q}_1(2)
\end{pmatrix},
\quad
\bm{Q}_{2,\mathrm{sum}} := \begin{pmatrix}
\bm{I} \otimes \bm{Q}_2(1) & \bm{I} \otimes \bm{q}_2(1) \\
\bm{0} & \bm{Q}_2(2)
\end{pmatrix},
\end{align}
with \( \bm{q}_{1}(1) = \bm{1} - \bm{Q}_{1}(1) \bm{1} \) and \( \bm{q}_{2}(1) = \bm{1} - \bm{Q}_{2}(1) \bm{1} \). The state space corresponding to the transitions specified by $\bm{P}_{\mathrm{sum}}$ is $\mathcal{E}_{\mathrm{sum}} := \mathcal{E}(1) \cup (\mathcal{S}(1) \times \mathcal{E}(2))$, and that for the transitions specified by $\bm{Q}_{1,\mathrm{sum}}$ and $\bm{Q}_{2,\mathrm{sum}}$ is $(\mathcal{S}(1) \times \mathcal{S}(2)) \cup \mathcal{S}(2)$. Here, with the obvious notation, $\mathcal{E}(i)\cup\mathcal{S}(i)$ is the state space of the underlying terminating Markov chains that define the vector $(\tau_1(i),\tau_2(i))$.
\end{proposition}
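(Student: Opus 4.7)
The plan is a coupling argument: build a single Markov-chain machinery on $\mathcal{E}_{\mathrm{sum}}\cup\mathcal{S}_{\mathrm{sum}}$ whose law is a CDPH with the stated parameters and whose marginal termination times coincide with $(\tau_1(1)+\tau_1(2),\tau_2(1)+\tau_2(2))$. I would begin with independent realizations of the two CDPH constructions: for $i=1,2$, a common-phase trajectory $M^{(i)}$ on $\mathcal{E}(i)$ of length $\tau_{1,2}(i)$ and independent post-common trajectories $M_1^{(i)},M_2^{(i)}$ on $\mathcal{S}(i)$.

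The common chain of the sum CDPH then runs the two common phases in sequence. It starts in $\mathcal{E}(1)$ by $\bm{\alpha}(1)$ (hence $\bm{\alpha}_{\mathrm{sum}}=(\bm{\alpha}(1),\bm{0})$) and evolves via $\bm{P}(1)$. At time $\tau_{1,2}(1)$ it jumps into $\mathcal{S}(1)\times\mathcal{E}(2)$, placing the shared CDPH 1 exit state $s_1$ in the first coordinate and an independent $\bm{\alpha}(2)$-draw in the second---this is the block $\bm{U}(1)\otimes\bm{\alpha}(2)$. Inside $\mathcal{S}(1)\times\mathcal{E}(2)$ the first coordinate is frozen while the second runs CDPH 2's common phase through $\bm{P}(2)$, giving $\bm{I}\otimes\bm{P}(2)$. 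The sum's common phase ends $\tau_{1,2}(2)$ steps later when the second coordinate exits via $\bm{U}(2)$; the block $\bm{I}\otimes\bm{U}(2)$ in $\bm{U}_{\mathrm{sum}}$ lands both marginals at a common state $(s_1,s_2)\in\mathcal{S}(1)\times\mathcal{S}(2)$, where $s_2$ is the shared CDPH 2 exit state.

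Thereafter, $M_1^{\mathrm{sum}}$ and $M_2^{\mathrm{sum}}$ evolve independently: each $M_k^{\mathrm{sum}}$ first replays the independent phase of $M_k^{(1)}$ on the $\mathcal{S}(1)$-coordinate via $\bm{Q}_k(1)$ (holding the $\mathcal{S}(2)$-coordinate fixed), and at its termination---encoded through $\bm{q}_k(1)$---moves to the pure-$\mathcal{S}(2)$ block at the stored state $s_2$ where it replays the independent phase of $M_k^{(2)}$ via $\bm{Q}_k(2)$ until termination. Reading transitions off this construction yields $\bm{Q}_{1,\mathrm{sum}}$ and $\bm{Q}_{2,\mathrm{sum}}$ as stated. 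Summing durations gives
\begin{align*}
\tau_{k,\mathrm{sum}}=(\tau_{1,2}(1)+\tau_{1,2}(2))+(\tau_k(1)-\tau_{1,2}(1))+(\tau_k(2)-\tau_{1,2}(2))=\tau_k(1)+\tau_k(2),\quad k=1,2,
\end{align*}
which is the desired distributional identity.

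The main obstacle is the book-keeping of the Kronecker structure: one must verify that each identity factor freezes the intended coordinate and that each exit vector $\bm{q}_k(1)$ appears in the position that transits $\mathcal{S}(1)\times\mathcal{S}(2)\to\mathcal{S}(2)$ while preserving $s_2$, so that the three matrices $\bm{P}_{\mathrm{sum}},\bm{U}_{\mathrm{sum}},\bm{Q}_{k,\mathrm{sum}}$ coming out of the coupling match \eqref{eq:alphasum}--\eqref{eq:Q1Q2sum} on the nose. A purely algebraic alternative would bypass this by applying Corollary \ref{coro:pgf-taua-taub} to $(\bm{\alpha}_{\mathrm{sum}},\bm{P}_{\mathrm{sum}},\bm{U}_{\mathrm{sum}},\bm{Q}_{1,\mathrm{sum}},\bm{Q}_{2,\mathrm{sum}})$, exploiting the block-upper-triangular form of $\bm{I}-\zeta_1\zeta_2\bm{P}_{\mathrm{sum}}$ and of $\bm{I}-\zeta_k\bm{Q}_{k,\mathrm{sum}}$ together with the identity $(\bm{A}\otimes\bm{B})(\bm{C}\otimes\bm{D})=(\bm{A}\bm{C})\otimes(\bm{B}\bm{D})$, and checking that the joint pgf factors as the product of the two marginal CDPH pgfs, which by independence equals $\mathbb{E}(\zeta_1^{\tau_{1,\mathrm{sum}}}\zeta_2^{\tau_{2,\mathrm{sum}}})$.
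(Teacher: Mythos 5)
Your construction is essentially the paper's own proof: the same decomposition $\tau_{k,\mathrm{sum}}=(\tau_{1,2}(1)+\tau_{1,2}(2))+(\tau_k(1)-\tau_{1,2}(1))+(\tau_k(2)-\tau_{1,2}(2))$, with the two common-shock phases run in sequence on $\mathcal{E}(1)\cup(\mathcal{S}(1)\times\mathcal{E}(2))$ and the post-shock evolutions replayed on $(\mathcal{S}(1)\times\mathcal{S}(2))\cup\mathcal{S}(2)$ with one coordinate frozen, exactly matching the stated Kronecker blocks. The closing remark about a purely algebraic pgf verification is an extra option the paper does not pursue, but your main argument coincides with theirs.
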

\begin{proof}
In order to explain the key ideas behind the proof, for $i,k\in\{1,2\}$ it will be helpful to write $\tau_k(i) = \tau_{1,2}(i)+(\tau_k(i)-\tau_{1,2}(i))$. Here the random variable $\tau_{1,2}(i)$ is the common shock component for the pair $(\tau_1(i),\tau_2(i))$. We can thus decompose each coordinate of the random vector $(\tau_{1,\mathrm{sum}},\tau_{2,\mathrm{sum}})=(\tau_1(1)+\tau_1(2),\tau_2(1)+\tau_2(2))$ as
\begin{align*}
\tau_{1,\mathrm{sum}} &= (\tau_{1,2}(1)+\tau_{1,2}(2)) + (\tau_1(1)-\tau_{1,2}(1)) + (\tau_1(2)-\tau_{1,2}(2)),\\ 
\tau_{2,\mathrm{sum}} &= (\tau_{1,2}(1)+\tau_{1,2}(2)) + (\tau_2(1)-\tau_{1,2}(1)) + (\tau_2(2)-\tau_{1,2}(2)). 
\end{align*}
To show that $(\tau_{1,\mathrm{sum}},\tau_{2,\mathrm{sum}})$ follows a CDPH distribution, we note from the above representation that $\tau_{1,2}(1)+\tau_{1,2}(2)$ can be regarded as the common shock component from which we can construct $\bm{P}_{\mathrm{sum}}$. After $\tau_{1,2}(1)$ and $\tau_{1,2}(2)$ are realized and given the states in $\mathcal{S}(1)$ and $\mathcal{S}(2)$ entered upon exiting $\mathcal{E}(1)$ and $\mathcal{E}(2)$, the variables $(\tau_1(1)-\tau_{1,2}(1)) + (\tau_1(2)-\tau_{1,2}(2))$ and $(\tau_2(1)-\tau_{1,2}(1)) + (\tau_2(2)-\tau_{1,2}(2))$ are conditionally independent: such observation can be used to construct $\bm{Q}_{1,\mathrm{sum}}$ and $\bm{Q}_{2,\mathrm{sum}}$. The construction of the matrix parameters of the CDPH representation of $(\tau_{1,\mathrm{sum}},\tau_{2,\mathrm{sum}})$ is described as follows:
\begin{itemize}
    \item We start by considering the part $\tau_{1,2}(1)$ that belongs to the common shock $\tau_{1,2}(1)+\tau_{1,2}(2)$. The process underlying $(\tau_{1,\mathrm{sum}},\tau_{2,\mathrm{sum}})$ starts in $\mathcal{E}(1)$ with the initial probability $\bfalp(1)$ (which explains \eqref{eq:alphasum}) and then transitions within $\mathcal{E}(1)$ according to $\bm{P}(1)$. Upon exiting $\mathcal{E}(1)$, the variable $\tau_{1,2}(1)$ is realized and the process underlying $(\tau_1(1),\tau_2(1))$ enters $\mathcal{S}(1)$ according to $\bm{U}(1)$. While one can start counting the remaining common shock component $\tau_{1,2}(2)$ using the initial probability $\bfalp(2)$ and transition probability matrix $\bm{P}(2)$ within $\mathcal{E}(2)$, it is important to also keep track of the state in $\mathcal{S}(1)$ because this will have an impact on the distributions of $\tau_1(1)-\tau_{1,2}(1)$ and $\tau_2(1)-\tau_{1,2}(1)$. These explain the block matrix representation of $\bm{P}_{\mathrm{sum}}$ in \eqref{eq:PUsum}. The submatrix $\bm{I} \otimes \bm{U}(2)$ of $\bm{U}_{\mathrm{sum}}$ in \eqref{eq:PUsum} subsequently takes care of the termination of $\tau_{1,2}(2)$ as the process exits $\mathcal{S}(1) \times \mathcal{E}(2)$ and enters $\mathcal{S}(1) \times \mathcal{S}(2)$.

    \item Once the underlying process of $(\tau_{1,\mathrm{sum}},\tau_{2,\mathrm{sum}})$ has entered $\mathcal{S}(1) \times \mathcal{S}(2)$, the clocks for $\tau_1(1)-\tau_{1,2}(1)$ and $\tau_2(1)-\tau_{1,2}(1)$ start ticking independently within the set $\mathcal{S}(1)$ according to the transition matrices $\bm{Q}_{1}(1)$ and $\bm{Q}_{2}(1)$, respectively, while keeping the state in $\mathcal{S}(2)$ fixed. When $\tau_1(1)-\tau_{1,2}(1)$ (resp. $\tau_2(1)-\tau_{1,2}(1)$) terminates according to the probability vector $\bm{q}_1(1)$ (resp. $\bm{q}_2(1)$), one no longer needs to keep track of the state space $\mathcal{S}(1)$, and then we move on to consider $\tau_1(2)-\tau_{1,2}(2)$ (resp. $\tau_2(2)-\tau_{1,2}(2)$) as the process evolves within $\mathcal{S}(2)$ according to $\bm{Q}_{1}(2)$ (resp. $\bm{Q}_{2}(2)$) until it gets absorbed. Such arguments lead to the specification of the matrices $\bm{Q}_{1,\mathrm{sum}}$ and $\bm{Q}_{2,\mathrm{sum}}$ in \eqref{eq:Q1Q2sum}.

\end{itemize}
The above construction ensures that $(\tau_{1,\mathrm{sum}},\tau_{2,\mathrm{sum}})$ is CDPH distributed, and the proof is complete.
\end{proof}
It is clear that $(\tau_{1,\mathrm{sum}},\tau_{2,\mathrm{sum}})=(\sum_{i=1}^m\tau_1(i),\sum_{i=1}^m\tau_2(i))$ for values of $m$ greater than two still follows a CDPH distribution by summing the CDPH bivariate random vectors one at a time and using Proposition \ref{prop:sumCDPH} repeatedly.

\section{Compound sums under bivariate CDPH claim count}\label{sec:random-sums}

Having defined the class of bivariate CDPH distributions, we now turn our attention to a class of bivariate random variables \( (Y_1, Y_2) \) with a random sum representation
\begin{align}\label{eq:Y1Y2def}
Y_1 := \sum_{\ell=1}^{\tau_1} X_{1,\ell}, \qquad Y_2 := \sum_{\ell=1}^{\tau_2} X_{2,\ell},
\end{align}
where, for each coordinate \( k \in \{1, 2\} \), we aggregate a random number of positive variables \( X_{k,1}, X_{k,2}, X_{k,3}, \dots \), with the total number \( \tau_k \in \mathbb{N} \) drawn from a bivariate CDPH distribution. In actuarial science, $(\tau_1,\tau_2)$ can be interpreted as bivariate claim count for correlated business lines. For \( k \in \{1, 2\} \), if \( X_{k,1}, X_{k,2}, X_{k,3}, \dots \) are the individual claim amounts specific to line $k$, then the compound sum $Y_k$ corresponds to the aggregate loss for line $k$. Before analyzing the distribution of \((Y_1, Y_2)\) in detail, we briefly outline the assumptions we make regarding the sequence \( \{ (X_{1,\ell}, X_{2,\ell}) \}_{\ell \in \mathbb{N}_+} \). Firstly, we assume that \( \{ (X_{1,\ell}, X_{2,\ell}) \}_{\ell \in \mathbb{N}_+} \) is a sequence of independent and identically distributed bivariate random vectors following the same distribution as a reference random vector \( (X_1, X_2) \). Secondly, the sequence \( \{ (X_{1,\ell}, X_{2,\ell}) \}_{\ell \in \mathbb{N}_+} \) is independent of \((\tau_1, \tau_2)\). While the CDPH distributed \((\tau_1, \tau_2)\) has support on \((\mathbb{N}+2) \times (\mathbb{N}+2)\), the cases where the counting variables are replaced by more general CDPH distributed \((N_1, N_2)\) with support on \((\mathbb{N}+k_1) \times (\mathbb{N}+k_2)\) for $k_1,k_2\in\mathbb{N}$ can be addressed by modifying our arguments with minimal technical effort (see Remark \ref{RemarkCDPH}).

We shall consider two scenarios for \( X_1 \) and \( X_2 \): the case where they are independent, discussed in Subsection \ref{subsec:independent-arrivals}, and the case where they are dependent, addressed in Subsection \ref{subsec:dependent-arrivals}.

\subsection{Independent summands}\label{subsec:independent-arrivals}

Suppose that \( X_1 \) is independent of \( X_2 \). Below, we provide information about the distribution of \( (Y_1, Y_2) \) under this assumption.

\begin{proposition}\label{th:bivariateLaplace1} 
For $\theta_1,\theta_2\ge0$, the joint Laplace transform $\mathcal{L}_{Y_1,Y_2}(\theta_1,\theta_2):=\mathbb{E}\left( \exp(-\theta_1 Y_1 - \theta_2 Y_2) \right)$ of the bivariate random sum in \eqref{eq:Y1Y2def} is given by
\begin{align*}
\mathcal{L}_{Y_1,Y_2}(\theta_1,\theta_2) = \bm{\alpha} \left( \bm{I} a^{-1}_{1,\theta_1} a^{-1}_{2,\theta_2} - \bm{P} \right)^{-1} \bm{U} \left( \left( \left( \bm{I} a^{-1}_{1,\theta_1} - \bm{Q}_1 \right)^{-1} \bm{q}_1 \right) \odot \left( \left( \bm{I} a^{-1}_{2,\theta_2} - \bm{Q}_2 \right)^{-1} \bm{q}_2 \right) \right),
\end{align*}
where \( a_{k,\theta_k} := \mathbb{E}\left( e^{-\theta_k X_{k}} \right) \) is the Laplace transform of $X_{k}$ for \( k \in \{1, 2\} \).
\end{proposition}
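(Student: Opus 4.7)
The plan is to reduce the claim to the joint probability generating function of $(\tau_1,\tau_2)$ already obtained in Corollary \ref{coro:pgf-taua-taub}. First I would condition on the pair $(\tau_1,\tau_2)$. Since the sequence $\{(X_{1,\ell},X_{2,\ell})\}_{\ell\in\mathbb{N}_+}$ is i.i.d.\ and independent of $(\tau_1,\tau_2)$, and since the assumption of this subsection is $X_1\ci X_2$ (which extends to independence of the two coordinate sequences $\{X_{1,\ell}\}_\ell$ and $\{X_{2,\ell}\}_\ell$ because of the i.i.d.\ structure across $\ell$), the conditional Laplace transform factorises into
\[
\mathbb{E}\!\left(e^{-\theta_1 Y_1-\theta_2 Y_2}\mid \tau_1,\tau_2\right)=\prod_{\ell=1}^{\tau_1}\mathbb{E}(e^{-\theta_1 X_{1,\ell}})\prod_{\ell=1}^{\tau_2}\mathbb{E}(e^{-\theta_2 X_{2,\ell}})=a_{1,\theta_1}^{\tau_1}a_{2,\theta_2}^{\tau_2}.
\]
Taking expectations then gives $\mathcal{L}_{Y_1,Y_2}(\theta_1,\theta_2)=\mathbb{E}(a_{1,\theta_1}^{\tau_1}a_{2,\theta_2}^{\tau_2})$, which is exactly the joint pgf of $(\tau_1,\tau_2)$ evaluated at $(\zeta_1,\zeta_2)=(a_{1,\theta_1},a_{2,\theta_2})$.

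Next I would invoke formula \eqref{eq:pgfCDPH} with this substitution and rewrite the resulting sum over $j\in\mathcal{S}$ as a dot product involving a Hadamard product, using the identity $\sum_{j\in\mathcal{S}} v_j w_j u_j = \bm{v}\,(\bm{w}\odot\bm{u})$ for the row vector $\bm{v}:=\bm{\alpha}(\bm{I}(a_{1,\theta_1}a_{2,\theta_2})^{-1}-\bm{P})^{-1}\bm{U}$ and the column vectors $\bm{w}:=(\bm{I}a_{1,\theta_1}^{-1}-\bm{Q}_1)^{-1}\bm{q}_1$ and $\bm{u}:=(\bm{I}a_{2,\theta_2}^{-1}-\bm{Q}_2)^{-1}\bm{q}_2$. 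This yields precisely the stated formula.

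The only issue that requires a comment is the domain of validity: I need to justify that Corollary \ref{coro:pgf-taua-taub} applies at $\zeta_k=a_{k,\theta_k}$. Because the summands $X_{k,\ell}$ are positive and $\theta_k\ge 0$, we have $a_{k,\theta_k}\in(0,1]$, so the hypothesis $\zeta_1,\zeta_2\in[0,1]$ is satisfied; in particular the matrices $\bm{I}-a_{k,\theta_k}\bm{Q}_k$ and $\bm{I}-a_{1,\theta_1}a_{2,\theta_2}\bm{P}$ are invertible since $\bm{P},\bm{Q}_1,\bm{Q}_2$ are strictly substochastic (their spectral radii are strictly less than one because the underlying chains are terminating). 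I do not expect any genuine obstacle; the step most prone to a slip is bookkeeping the rearrangement between the Kronecker/indexed sum form in \eqref{eq:pgfCDPH} and the compact Hadamard-product expression, but this is purely algebraic.
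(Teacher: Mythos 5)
Your proposal is correct and follows essentially the same route as the paper's own proof: conditioning on $(\tau_1,\tau_2)$, factorising the conditional Laplace transform into $a_{1,\theta_1}^{\tau_1}a_{2,\theta_2}^{\tau_2}$, and then evaluating the joint pgf \eqref{eq:pgfCDPH} at $(\zeta_1,\zeta_2)=(a_{1,\theta_1},a_{2,\theta_2})$, with the same observation that $a_{k,\theta_k}\in[0,1]$ justifies the substitution. Your extra remarks on the Hadamard-product rewriting and invertibility are harmless elaborations of steps the paper treats as immediate.
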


\begin{proof}
By conditioning on $(\tau_1,\tau_2)$, we have
\begin{align*}
\mathcal{L}_{Y_1,Y_2}(\theta_1,\theta_2) &= \mathbb{E}\left( \exp\left( -\theta_1 \sum_{\ell=1}^{\tau_1} X_{1,\ell} - \theta_2 \sum_{\ell=1}^{\tau_2} X_{2,\ell} \right) \right)\\
&= \mathbb{E}\left( \mathbb{E}\left( \left. \exp\left( -\theta_1 \sum_{\ell=1}^{\tau_1} X_{1,\ell} - \theta_2 \sum_{\ell=1}^{\tau_2} X_{2,\ell} \right) \right |\tau_1, \tau_2 \right) \right)\\
&= \mathbb{E}\left( \left[ \prod_{\ell=1}^{\tau_1} \mathbb{E}\left( e^{-\theta_1 X_{1,\ell}} \right) \right] \left[ \prod_{\ell=1}^{\tau_2} \mathbb{E}\left( e^{-\theta_2 X_{2,\ell}} \right) \right] \right) \\
&= \mathbb{E}\left( a_{1,\theta_1}^{\tau_1} \, a_{2,\theta_2}^{\tau_2} \right).
\end{align*}

The above expression is simply the joint pgf in \eqref{eq:pgfCDPH} evaluated at the arguments $(\zeta_1,\zeta_2)=( a_{1,\theta_1}, a_{2,\theta_2})$. Since $a_{1,\theta_1}$ and $a_{2,\theta_2}$ are the Laplace transforms of positive random variables, one must have $a_{1,\theta_1},a_{2,\theta_2}\in[0,1]$ for $\theta_1,\theta_2\ge0$. This yields the desired result.
\end{proof}

Proposition \ref{th:bivariateLaplace1} provides a characterization of the distribution of \( (Y_1, Y_2) \) in terms of the joint Laplace transform while leaving the distributions of $X_1$ and $X_2$ unspecified. It is easy to see that if $a_{1,\theta_1}$ and $a_{2,\theta_2}$ are rational in $\theta_1$ and $\theta_2$ respectively, then $\mathcal{L}_{Y_1,Y_2}(\theta_1,\theta_2)$ is rational in \( \theta_1 \) and \( \theta_2 \). Since univariate rational Laplace transform characterizes matrix-exponential distributions (see e.g. Chapter 4 in \cite{bladt2017matrix}) while multivariate rational Laplace transform characterizes the class of MME distributions proposed by \cite{bladt2010multivariate}, we have the following corollary as an immediate consequence.

\begin{corollary}\label{coro:ComSumMME}
If $X_1$ and $X_2$ follow independent matrix-exponential distributions, then the bivariate random vector $(Y_1,Y_2)$ defined in \eqref{eq:Y1Y2def} belongs to the class of $\operatorname{MME}$ distributions introduced in \cite{bladt2010multivariate}.
\end{corollary}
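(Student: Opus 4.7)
The plan is to leverage the characterization of MME distributions in \cite{bladt2010multivariate} via rational multivariate Laplace transforms, as noted in the text preceding the corollary. Thus it suffices to verify that, under the matrix-exponential assumption on $X_1$ and $X_2$, the joint Laplace transform $\mathcal{L}_{Y_1,Y_2}(\theta_1,\theta_2)$ of Proposition \ref{th:bivariateLaplace1} is a rational function of $(\theta_1,\theta_2)$.

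First, I would recall that if $X_k$ is matrix-exponential, its univariate Laplace transform $a_{k,\theta_k}=\mathbb{E}(e^{-\theta_k X_k})$ is by definition a rational function of $\theta_k$, say $a_{k,\theta_k}=p_k(\theta_k)/q_k(\theta_k)$ with $p_k,q_k$ polynomials. In particular, $a_{k,\theta_k}^{-1}=q_k(\theta_k)/p_k(\theta_k)$ is also rational in $\theta_k$.

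Next I would plug these rational expressions into the formula from Proposition \ref{th:bivariateLaplace1}, namely
\[
\mathcal{L}_{Y_1,Y_2}(\theta_1,\theta_2) = \bm{\alpha} \left( \bm{I} a^{-1}_{1,\theta_1} a^{-1}_{2,\theta_2} - \bm{P} \right)^{-1} \bm{U} \left( \left( \left( \bm{I} a^{-1}_{1,\theta_1} - \bm{Q}_1 \right)^{-1} \bm{q}_1 \right) \odot \left( \left( \bm{I} a^{-1}_{2,\theta_2} - \bm{Q}_2 \right)^{-1} \bm{q}_2 \right) \right),
\]
and observe that each of the three matrices to be inverted has entries that are rational functions of $(\theta_1,\theta_2)$. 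Since, for a matrix $\bm{M}(\theta_1,\theta_2)$ whose entries are rational in $(\theta_1,\theta_2)$, the entries of $\bm{M}(\theta_1,\theta_2)^{-1}$ (given by Cramer's rule via cofactors and determinant) are also rational, each of the three factors is a vector/matrix with entries rational in $(\theta_1,\theta_2)$. The Hadamard product and the surrounding vector-matrix products preserve rationality, so $\mathcal{L}_{Y_1,Y_2}(\theta_1,\theta_2)$ is indeed a rational function of $(\theta_1,\theta_2)$.

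Finally, I would invoke the defining characterization of the MME class in \cite{bladt2010multivariate}: a non-negative bivariate random vector whose joint Laplace transform is rational in its arguments (on a neighborhood of the origin in the non-negative orthant, where all quantities are well-defined) belongs to the MME class. This yields the conclusion. The only mildly delicate point is confirming that the matrix inverses are well-defined on a neighborhood of $(\theta_1,\theta_2)=(0,0)$, which follows because at $\theta_1=\theta_2=0$ we have $a_{1,0}=a_{2,0}=1$ and the inverses $(\bm{I}-\bm{P})^{-1}$, $(\bm{I}-\bm{Q}_1)^{-1}$, $(\bm{I}-\bm{Q}_2)^{-1}$ exist by the terminating assumption on the underlying Markov chains; by continuity, invertibility is retained on a neighborhood, which is all that is needed for the rational representation to be valid.
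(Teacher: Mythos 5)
Your proposal is correct and follows essentially the same route as the paper, which deduces the corollary directly from Proposition \ref{th:bivariateLaplace1} by noting that rationality of $a_{1,\theta_1}$ and $a_{2,\theta_2}$ makes $\mathcal{L}_{Y_1,Y_2}(\theta_1,\theta_2)$ rational in $(\theta_1,\theta_2)$ and then invoking the characterization of the $\operatorname{MME}$ class via rational multivariate Laplace transforms. You merely spell out the details (Cramer's rule and invertibility near the origin) that the paper treats as immediate.
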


In the following subsection, we establish a related result by showing that if \( (X_1, X_2) \) belongs to the subclass of multivariate phase-type distributions (\( \operatorname{MPH}^* \)) proposed in \cite{kulkarni1989new} then so does \( (Y_1, Y_2) \). The main advantage of \( \operatorname{MPH}^* \) distributions is that, unlike the MME class, they possess a simple probabilistic interpretation.

\subsection{Dependent summands}\label{subsec:dependent-arrivals}

Here we assume that \( (X_1, X_2) \) follows a bivariate version of Kulkarni's \( \operatorname{MPH}^* \) class which we briefly describe next. Such an \( \operatorname{MPH}^* \) distribution is constructed using a single terminating continuous-time Markov jump process $J=\{J(t)\}_{t\ge 0}$ with finite state space $\mathcal{C}$ and a system of state-dependent rewards. Specifically, let \( (\bm{\pi}, \bm{S}, \bm{R}) \) be the parameters, where:

\begin{itemize}
    \item \( \bm{\pi} \) is the initial probability vector;
    \item \( \bm{S} \) is the subintensity matrix governing the continuous-time transitions between transient states; and
    \item \( \bm{R} =\{r_{kj}\}\) is the reward matrix, where \( r_{kj} \) represents the instantaneous non-negative reward associated to the \( j \)-th state in $\mathcal{C}$ and \( k \)-th coordinate for $k\in\{1,2\}$.
\end{itemize}
With these elements, $J$ evolves according to $(\bm{\pi},\bm{S})$ and the coordinates $X_1$ and $X_2$ are defined via
\begin{align*}
X_k:=\int_0^\delta r_{k,J(s)} \,d s,\qquad k\in\{1,2\},
\end{align*}
where $\delta:=\inf\{t\ge 0: J(t)\notin\mathcal{C}\}$ is the termination time of $J$. The random vector \( (X_1, X_2) \) obtained through this construction is said to follow an \( \operatorname{MPH}^*(\bm{\pi}, \bm{S}, \bm{R})\) distribution.

This construction allows for capturing dependencies between \( X_1 \) and \( X_2 \) through the shared underlying Markov process \( J \). Apart from including independence as a special case, the class of \( \operatorname{MPH}^* \) distributions is known to be dense within the class of bivariate distributions with support on the positive quadrant. Unfortunately, the \( \operatorname{MPH}^* \) class is not particularly tractable in the general case as the joint distribution function cannot be expressed in closed form. However, its joint Laplace transform is known to be
\begin{equation*}
\mathbb{E} \left( e^{-\theta_1X_1-\theta_2X_2} \right) = \bm{\pi}\left( \bm{\Delta}_{\theta_1,\theta_2,\bm{R}} - \bm{S} \right)^{-1} (-\bm{S}\bm{1}),
\end{equation*}
for $\theta_1,\theta_2\ge0$, where
\begin{equation*}
\bm{\Delta}_{\theta_1,\theta_2,\bm{R}}=\operatorname{diag}\{\theta_1r_{1j}+\theta_2r_{2j}: j\in\mathcal{C}\}.
\end{equation*}
Our aim is to prove that, under this setup, the bivariate random vector $(Y_1,Y_2)$ defined in (\ref{eq:Y1Y2def}) with each $(X_{1,\ell},X_{2,\ell})$ distributed as the generic bivariate random vector $(X_1,X_2)$ follows an $\operatorname{MPH}^*$ distribution as well.

\begin{proposition}
Suppose that each bivariate random vector in the sequence \(\{(X_{1,\ell}, X_{2,\ell})\}_{\ell \in \mathbb{N}_+}\) follows an \(\operatorname{MPH}^*\) distribution with parameters \( (\bm{\pi}, \bm{S}, \bm{R}) \). Then the pair \((Y_1, Y_2)\) defined in \eqref{eq:Y1Y2def} follows an \(\operatorname{MPH}^*\) distribution with an underlying continuous-time Markov jump process on the state-space \(\big(\mathcal{E} \cup (\mathcal{S} \times \mathcal{S}) \cup (\{\partial_1\} \times \mathcal{S}) \cup (\mathcal{S} \times \{\partial_2\})\big) \times \mathcal{C}\), initial probability vector \((\bm{\alpha}, \bm{0}, \bm{0}, \bm{0}) \otimes \bm{\pi}\), subintensity matrix
\[
\begin{pmatrix}
\bm{I} & \bm{0} & \bm{0} & \bm{0} \\
\bm{0} & \bm{I} & \bm{0} & \bm{0} \\
\bm{0} & \bm{0} & \bm{I} & \bm{0} \\
\bm{0} & \bm{0} & \bm{0} & \bm{I}
\end{pmatrix} \otimes \bm{S} + \begin{pmatrix}
\bm{P} & \bm{U}^* & \bm{0} & \bm{0} \\
\bm{0} & \bm{Q}_1 \otimes \bm{Q}_2 & \bm{q}_1 \otimes \bm{Q}_2 & \bm{Q}_1 \otimes \bm{q}_2 \\
\bm{0} & \bm{0} & \bm{Q}_2 & \bm{0} \\
\bm{0} & \bm{0} & \bm{0} & \bm{Q}_1
\end{pmatrix} \otimes (-\bm{S}\bm{1}\bm{\pi}),
\]
and reward matrix
\[
\begin{pmatrix}
\bm{1}^\intercal \otimes \bm{r}_1 & \bm{1}^\intercal \otimes \bm{r}_1 & \bm{0} & \bm{1}^\intercal \otimes \bm{r}_1 \\
\bm{1}^\intercal \otimes \bm{r}_2 & \bm{1}^\intercal \otimes \bm{r}_2 & \bm{1}^\intercal \otimes \bm{r}_2 & \bm{0}
\end{pmatrix}. 
\]
Here, for $k\in\{1,2\}$, the quantity $\bm{r}_k$ represents the row vector $(r_{k1},r_{k2},\ldots,r_{k|\mathcal{C}|})$ such that
\[
\bm{R} = \begin{pmatrix} \bm{r}_1 \\ \bm{r}_2 \end{pmatrix}.
\]
\end{proposition}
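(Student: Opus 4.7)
The plan is to verify the stated $\operatorname{MPH}^*$ representation by viewing the proposed Markov jump process as a compound continuous-time phase-type structure layered on top of the discrete CDPH chain $\gamma$ driven by $\bm P_{\max}$ as in \eqref{eq:maximum-transition}. The product state space $\bigl(\mathcal{E}\cup(\mathcal{S}\times\mathcal{S})\cup(\{\partial_1\}\times\mathcal{S})\cup(\mathcal{S}\times\{\partial_2\})\bigr)\times\mathcal{C}$ hosts two nested clocks: the outer coordinate tracks the discrete step of $\gamma$, and the inner coordinate runs a full $\operatorname{MPH}^*(\bm{\pi},\bm S,\bm R)$ trajectory which, upon absorption, triggers a discrete step of $\gamma$ and restarts the $\operatorname{MPH}^*$. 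This is precisely what the subintensity matrix $\bm I\otimes\bm S+\bm P_{\max}\otimes(-\bm S\bm 1\bm{\pi})$ encodes: the first term drives the $\operatorname{MPH}^*$ evolution while $\gamma$ is frozen, while the second replaces the $\operatorname{MPH}^*$ exit rate $-\bm S\bm 1$ with a coupled transition in $\gamma$ (according to $\bm P_{\max}$) followed by a fresh $\operatorname{MPH}^*$ cycle initialized from $\bm{\pi}$. This is the bivariate analogue of the standard compound phase-type construction discussed in Chapter 3 of \cite{bladt2017matrix}.

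The verification would then proceed in three key steps. First, show that the discrete chain embedded at the $\operatorname{MPH}^*$ absorption epochs is $\gamma$ itself with initial distribution $(\bm{\alpha},\bm 0,\bm 0,\bm 0)$ and transition matrix $\bm P_{\max}$; consequently $\gamma$ spends $\tau_{1,2}$ steps in $\mathcal{E}$, $\min\{\tau_1,\tau_2\}-\tau_{1,2}$ steps in $\mathcal{S}\times\mathcal{S}$, $(\tau_2-\tau_1)^+$ steps in $\{\partial_1\}\times\mathcal{S}$, and $(\tau_1-\tau_2)^+$ steps in $\mathcal{S}\times\{\partial_2\}$. Second, observe that each such discrete step corresponds to an independent $\operatorname{MPH}^*(\bm{\pi},\bm S,\bm R)$ cycle producing an i.i.d. bivariate pair distributed as the reference vector $(X_1,X_2)$. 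Third, read off the reward rates in the stated $\bm R_0$: inside $\mathcal{E}$ and $\mathcal{S}\times\mathcal{S}$ both entries of each cycle are accumulated, inside $\{\partial_1\}\times\mathcal{S}$ only the $X_2$-entry contributes (the $X_1$-row of $\bm R_0$ vanishing there), and symmetrically for $\mathcal{S}\times\{\partial_2\}$. Summing step by step reproduces $Y_1=\sum_{\ell=1}^{\tau_1}X_{1,\ell}$ and $Y_2=\sum_{\ell=1}^{\tau_2}X_{2,\ell}$ exactly as in \eqref{eq:Y1Y2def}.

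The main obstacle is justifying that the ``phantom'' $\operatorname{MPH}^*$ pairs generated during the $\{\partial_1\}\times\mathcal{S}$ and $\mathcal{S}\times\{\partial_2\}$ phases, whose unused coordinates are silently integrated out through the zero entries of $\bm R_0$, do not disturb the joint law of $(Y_1,Y_2)$. I would resolve this by a direct coupling that identifies the $\ell$-th $\operatorname{MPH}^*$ cycle of the constructed process with the $\ell$-th pair $(X_{1,\ell},X_{2,\ell})$ of the original i.i.d. sequence, so that the dropped components simply disappear in the projection onto $(Y_1,Y_2)$. As an independent sanity check, one may verify the joint Laplace transform in two ways: on one hand by conditioning on $(\tau_1,\tau_2)$ to obtain $\mathbb{E}[e^{-\theta_1 Y_1-\theta_2 Y_2}\mid\tau_1,\tau_2]=h(\theta_1,\theta_2)^{\min\{\tau_1,\tau_2\}}h(\theta_1,0)^{(\tau_1-\tau_2)^+}h(0,\theta_2)^{(\tau_2-\tau_1)^+}$ with $h(\theta_1,\theta_2)=\bm{\pi}(\bm\Delta_{\theta_1,\theta_2,\bm R}-\bm S)^{-1}(-\bm S\bm 1)$ and summing against the joint pmf $\psi$ of Proposition \ref{th:mass-taua-taub}, and on the other hand by evaluating the $\operatorname{MPH}^*$ Laplace formula on the proposed parameters via block-Kronecker resolvent manipulations. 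Matching the two expressions is conceptually straightforward but notationally heavy due to the $4\times 4$ block structure, which is why the coupling is the preferable route.
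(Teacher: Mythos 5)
Your proposal is correct and follows essentially the same route as the paper: both construct the continuous-time process by running one i.i.d.\ $\operatorname{MPH}^*(\bm{\pi},\bm{S},\bm{R})$ cycle per step of the chain $\gamma$ driven by $\bm{P}_{\mathrm{max}}$ (which is what $\bm{I}\otimes\bm{S}+\bm{P}_{\mathrm{max}}\otimes(-\bm{S}\bm{1}\bm{\pi})$ encodes), and then switch off the reward of the already-terminated coordinate in the blocks $\{\partial_1\}\times\mathcal{S}$ and $\mathcal{S}\times\{\partial_2\}$ so that the accumulated rewards reproduce $(Y_1,Y_2)$. Your additional Laplace-transform cross-check and the explicit coupling handling the unused coordinates are refinements of, not departures from, the paper's probabilistic argument.
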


\begin{proof}
We follow a probabilistic approach similar to that in Theorem 3.1.28 of \cite{bladt2017matrix}. The key idea is to construct a continuous-time Markov jump process whose accumulated rewards correspond to the random variables \(Y_1\) and \(Y_2\).

Recall \(\gamma\) is the discrete-time Markov chain that leads to the \(\operatorname{MDPH}^*\) representation \eqref{eq:tau_as_Kulkarni}. That is, \(\gamma\) is a Markov chain on the state space \(\mathcal{E} \cup (\mathcal{S} \times \mathcal{S}) \cup (\{\partial_1\} \times \mathcal{S}) \cup (\mathcal{S} \times \{\partial_2\})\). For each \(\ell \in \mathbb{N}_+\), suppose that $J_\ell$ is the process underlying the \(\operatorname{MPH}^*\) pair \((X_{1,\ell}, X_{2,\ell})\). Then, we concatenate independent realizations of $J_\ell$'s to match the number of steps of \(\gamma\) prior to termination. Thus, we construct a process that keeps track of the state of \(\gamma\) while simultaneously evolving within \(\mathcal{C}\). After starting \(\gamma \times J_1\) according to the initial distribution \((\bm{\alpha}, \bm{0}, \bm{0}, \bm{0}) \otimes \bm{\pi}\), the constructed process evolves as follows:
\begin{itemize}
    \item While in \(\mathcal{E} \times \mathcal{C}\) (representing the common-shock phase), \(\gamma\) evolves according to the transition matrix \(\bm{P}\) and \(J_\ell\) evolves independently according to the subintensity matrix \(\bm{S}\). In this phase, both \(Y_1\) and \(Y_2\) accumulate rewards, reflecting that both components are active.

    \item When \(\gamma\) leaves \(\mathcal{E}\), it transitions to \(\mathcal{S} \times \mathcal{S}\) with transition probabilities given by \(\bm{U}^*\), and the constructed process moves into \((\mathcal{S} \times \mathcal{S}) \times \mathcal{C}\) representing the phase after the common shocks. Here, \(\gamma\) evolves according to the Kronecker product \(\bm{Q}_1 \otimes \bm{Q}_2\), while \(J_\ell\) continues to evolve according to \(\bm{S}\). Both \(Y_1\) and \(Y_2\) continue to accumulate rewards.

    \item If one of the components reaches absorption (i.e., transitions to \(\partial_1\) or \(\partial_2\)), the process \(J_\ell\) moves to \(\{\partial_1\} \times \mathcal{S}\) or \(\mathcal{S} \times \{\partial_2\}\), respectively, representing single-coordinate continuation in \((\{\partial_1\} \times \mathcal{S}) \times \mathcal{C}\) or \((\mathcal{S} \times \{\partial_2\}) \times \mathcal{C}\) for the constructed process. In \((\{\partial_1\} \times \mathcal{S}) \times \mathcal{C}\) , only \(Y_2\) continues to accumulate rewards, while the reward accumulation for \(Y_1\) ceases. Conversely, in  \((\mathcal{S} \times \{\partial_2\}) \times \mathcal{C}\), only \(Y_1\) continues to accumulate rewards.

    \item The process terminates when both components have reached absorption. 
\end{itemize}
By constructing the Markov jump process as described and defining the rewards accordingly, we ensure that the accumulated rewards \(Y_1\) and \(Y_2\) correspond to the total occupation times of their respective components before absorption. This setup mirrors the dynamics of the original \(\operatorname{MPH}^*\) distribution for the pair \((Y_1, Y_2)\), thus completing the proof.
\end{proof}

With the random vector $(Y_1,Y_2)$ fully characterized as an \(\operatorname{MPH}^*\) distribution, the joint moments of the pair follow directly from Theorem 8.1.5 in \cite{bladt2017matrix}.

\section{Estimation for bivariate CDPH random variables}\label{sec:estimation}

We shall perform parameter estimation given the data $\{(n_1^{(m)},n_2^{(m)})\}_{m=1}^n$, where each $(n_1^{(m)},n_2^{(m)})$ for $m=1,2,\ldots,n$ is an independent realization from our proposed bivariate CDPH distribution. 
For later use, we denote $(M_1^{(m)},M_2^{(m)})$ as the joint process underlying the $m$-th CDPH random vector with realization $(n_1^{(m)},n_2^{(m)})$. We first review the fully observed case, and then proceed to provide maximum likelihood estimation through the use of the EM algorithm. 

\subsection{The fully observed case}

Let the parameters be collected in $\mat{\Theta}: = (\bfalp , \mat{P},\bm{U},\mat{Q}_1,\mat{Q}_2)$. The likelihood is given by
\begin{align*}
\mathcal{L}^O(\mat{\Theta} \mid \{(n_1^{(m)},n_2^{(m)})\}_{m=1}^n)&
=\prod_{m=1}^nf_{\tau_1,\tau_2}(n_1^{(m)},n_2^{(m)};\mat{\Theta}),
\end{align*}
where we place emphasis on the dependence of the pmf on $\mat{\Theta}$ by writing $f_{\tau_1,\tau_2}(\cdot,\cdot;\mat{\Theta})$ instead of $f_{\tau_1,\tau_2}(\cdot,\cdot)$ (see Proposition \ref{th:mass-taua-taub}).

While direct optimization via gradient methods is possible and relatively quick for very small dimensions, the EM algorithm is more precise and efficient for moderate matrix dimensions $|\mathcal{E}|$ and $|\mathcal{S}|$. Thus, we introduce the following latent variables:
\begin{enumerate}
\item $A_i$: the number of times the processes $\{M_1^{(m)}\}_{m=1}^n$ start in state $i\in\mathcal{E}$.
\item $N^{A}_{ij}$: the total number of jumps of $\{M_1^{(m)}\}_{m=1}^n$ from state $i\in\mathcal{E}$ to $j\in\mathcal{E}$.
\item $N^{T}_{ij}$: the total number of jumps of $\{M_1^{(m)}\}_{m=1}^n$ from state $i\in\mathcal{E}$ to $j\in\mathcal{S}$.
\item $N^{B,k}_{ij}$: the total number of jumps of $\{M_k^{(m)}\}_{m=1}^n$ from state $i\in\mathcal{S}$ to $j\in\mathcal{S}$, where $k\in\{1,2\}$.
\item $N^{B,k}_{i}$: the total number of jumps of $\{M_k^{(m)}\}_{m=1}^n$ from state $i\in\mathcal{S}$ to the absorbing state, where $k\in\{1,2\}$.
\end{enumerate}
Having specified the above sufficient statistics, we are able to reconstruct the paths of each of the Markov jump processes. Recall that $\alpha_i$ is the $i$-th element of the row vector $\bm{\alpha}$ for $i\in \mathcal{E}$ as well as the notation $\bm{P}=\{p_{ij}\}_{i,j \in \mathcal{E}}$, $\bm{Q}_k=\{q_{k,ij}\}_{i,j \in \mathcal{S}}$ and $\bm{U}=\{u_{ij}\}_{i\in \mathcal{E},j\in \mathcal{S}}$. For $k\in\{1,2\}$, we further define $q_{k,i}$ to be the $i$-th element of the column vector $\bm{q}_k$ for $i\in \mathcal{S}$. Then, we can write the complete likelihood as
\begin{align*}
&\mathcal{L}^C( \mat{\Theta}\mid \{M_1^{(m)}\}_{m=1}^n, \{M_2^{(m)}\}_{m=1}^n )\\
&=\left(\prod_{i\in\mathcal{E}} \alpha_i^{A_i}\right) \left(\prod_{i,j\in\mathcal{E}} p_{ij}^{N^{A}_{ij}}\right)
\left(\prod_{i\in\mathcal{E}}\prod_{j\in\mathcal{S}} u_{ij}^{N^{T}_{ij}}\right)
\left(\prod_{k=1}^2\prod_{i,j\in\mathcal{S}} q_{k,ij}^{N^{B,k}_{ij}}\right) 
\left(\prod_{k=1}^2\prod_{i\in\mathcal{S}} q_{k,i}^{N^{B,k}_{i}}\right).
\end{align*}
The above specification is particularly convenient since it belongs to the exponential dispersion family of distributions, and thus has fully explicit maximum likelihood estimators (with a `$\hat{~~}$' placed on top of the corresponding parameter) given by
\begin{align*}
\hat \alpha_{i}&=\frac{A_i}{n},\quad 
\hat p_{ij}=\frac{N_{ij}^A}{
\sum_{\ell\in\mathcal{E}}N_{i\ell}^{A}+\sum_{\ell\in\mathcal{S}}N_{i\ell}^{T}
},\quad
\hat u_{ij}=\frac{N_{ij}^T}{
\sum_{\ell\in\mathcal{E}}N_{i\ell}^{A}+\sum_{\ell\in\mathcal{S}}N_{i\ell}^{T}
},
\\
\hat q_{k,ij}&=\frac{N_{ij}^{B,k}}{
\sum_{\ell\in\mathcal{S}}N_{i\ell}^{B,k} + N_{i}^{B,k}
},\quad
\hat q_{k,i}=\frac{N_{i}^{B,k}}{
\sum_{\ell\in\mathcal{S}}N_{i\ell}^{B,k} + N_{i}^{B,k}
}.
\end{align*}

\subsection{The EM algorithm}\label{subsec:EMalg}

Since the full-trajectory data is not observed, we employ the EM algorithm to find the Maximum Likelihood Estimate iteratively. This implies that at each iteration, the conditional expectations of the sufficient statistics, given the absorption times, are computed, corresponding to the E-step. Subsequently such conditional expectations are replaced into $\mathcal{L}^C$ instead of the statistics themselves, and so upon maximization, we obtain updated parameters $( \mat{\Theta} )$, commonly referred to as the M-step.

The conditional expectations required in the E-step are given as follows: 
\begin{lemma}
Let $\xi:=\{(N^{(m)}_1,N^{(m)}_2)=(n^{(m)}_1,n^{(m)}_2)\}_{m=1}^n$ be the observed data. The conditional expectations required in the E-step are given as follows:
\begin{align*}
    \mathbb{E}[A_i\mid \xi]=\sum_{m=1}^n \frac{\alpha_i f_{\tau_1,\tau_2|M_1,M_2}(n_1^{(m)},n_2^{(m)}|i)}{ f_{\tau_1,\tau_2}(n_1^{(m)},n_2^{(m)}) }\,.
    \end{align*}
\begin{align*}
    \mathbb{E}[N_{ij}^A\mid \xi]=\sum_{m=1}^n \sum_{t\ge1} \frac{(\bfalp \mat{P}^{t-1})_i\,p_{ij} f_{\tau_1,\tau_2|M_1,M_2}(n_1^{(m)}-t,n_2^{(m)}-t|j) }{ f_{\tau_1,\tau_2}(n_1^{(m)},n_2^{(m)}) }\,.
    \end{align*}

\begin{align*}
\mathbb{E}[N_{ij}^{T}\mid \xi]=\sum_{m=1}^n \sum_{t\ge1}
 \frac{
 (\bfalp \mat{P}^{t-1})_i \, u_{ij}  (\mat{Q}_1^{n_1^{(m)}-t-1}\vect{q}_1)_j(\mat{Q}_2^{n_2^{(m)}-t-1}\vect{q}_2)_j
 }{ f_{\tau_1,\tau_2}(n_1^{(m)},n_2^{(m)}) }\,.
\end{align*}
    
\begin{align*}
\mathbb{E}[N_{ij}^{B,1}\mid \xi]=\sum_{m=1}^n \sum_{t\ge2}\sum_{\ell=1}^{t-1}\sum_{r\in\mathcal{S}}
 \frac{
 (\bfalp \mat{P}^{\ell-1}\bm{U})_r (\mat{Q}_1^{t-\ell-1})_{ri} \, q_{1,ij} (\mat{Q}_1^{n_1^{(m)}-t-1}\vect{q}_1)_j (\mat{Q}_2^{n_2^{(m)}-\ell-1}\vect{q}_2)_r
 }{ f_{\tau_1,\tau_2}(n_1^{(m)},n_2^{(m)}) }\,.
\end{align*}

\begin{align*}
\mathbb{E}[N_{ij}^{B,2}\mid \xi]=\sum_{m=1}^n \sum_{t\ge2}\sum_{\ell=1}^{t-1}\sum_{r\in\mathcal{S}}
 \frac{
 (\bfalp \mat{P}^{\ell-1}\bm{U})_r (\mat{Q}_2^{t-\ell-1})_{ri} \, q_{2,ij} (\mat{Q}_2^{n_2^{(m)}-t-1}\vect{q}_2)_j (\mat{Q}_1^{n_1^{(m)}-\ell-1}\vect{q}_1)_r
 }{ f_{\tau_1,\tau_2}(n_1^{(m)},n_2^{(m)}) }\,.
\end{align*}

\begin{align*}
\mathbb{E}[N_{i}^{B,1}\mid \xi]=\sum_{m=1}^n \sum_{t\ge2}\sum_{\ell=1}^{t-1}\sum_{r\in\mathcal{S}}
 \frac{
 (\bfalp \mat{P}^{\ell-1}\bm{U})_r (\mat{Q}_1^{t-\ell-1})_{ri} \, q_{1,i} (\mat{Q}_2^{n_2^{(m)}-\ell-1}\vect{q}_2)_r
 }{ f_{\tau_1,\tau_2}(n_1^{(m)},n_2^{(m)}) }\,.
\end{align*}

\begin{align*}
\mathbb{E}[N_{i}^{B,2}\mid \xi]=\sum_{m=1}^n \sum_{t\ge2}\sum_{\ell=1}^{t-1}\sum_{r\in\mathcal{S}}
 \frac{
 (\bfalp \mat{P}^{\ell-1}\bm{U})_r (\mat{Q}_2^{t-\ell-1})_{ri} \, q_{2,i}  (\mat{Q}_1^{n_1^{(m)}-\ell-1}\vect{q}_1)_r
 }{ f_{\tau_1,\tau_2}(n_1^{(m)},n_2^{(m)}) }\,.
\end{align*}
Here $f_{\tau_1,\tau_2}$ and $f_{\tau_1,\tau_2|M_1,M_2}$ are given by \eqref{eq:phi1} and \eqref{eq:phik1} with the help of \eqref{eq:psi1} and \eqref{eq:psik1}.

\end{lemma}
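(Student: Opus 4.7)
The plan is to write each latent count as a sum of indicator random variables over samples and over time steps, and then compute the conditional expectation of each indicator by applying Bayes' rule together with the Markov property. Since the $n$ observations are independent and identically distributed, each expectation decomposes as an unweighted sum over $m \in \{1, \dots, n\}$, so it suffices to analyze a single generic realization $(n_1, n_2)$ and then reinstate the outer sum over $m$. The denominator in every formula is the same CDPH joint pmf $f_{\tau_1,\tau_2}(n_1^{(m)}, n_2^{(m)})$ from Proposition~\ref{th:mass-taua-taub}, arising from the identity $\mathbb{E}[X \mid Y = y] = \mathbb{E}[X \mathds{1}_{\{Y=y\}}]/\mathbb{P}(Y = y)$.

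For $A_i$ I would write $A_i = \sum_m \mathds{1}_{\{M_1^{(m)}(0) = i\}}$; the numerator is then just $\alpha_i \, f_{\tau_1,\tau_2 \mid M_1, M_2}(n_1^{(m)}, n_2^{(m)} \mid i)$. For the first-phase jump counts, I would decompose $N_{ij}^A = \sum_m \sum_{t \geq 1} \mathds{1}_{\{M_1^{(m)}(t-1) = i,\, M_1^{(m)}(t) = j\}}$ with $i, j \in \mathcal{E}$; the Markov property factorizes the joint probability of the indicator and the absorption event as $(\bm{\alpha} \bm{P}^{t-1})_i \, p_{ij} \, f_{\tau_1,\tau_2 \mid M_1, M_2}(n_1^{(m)} - t, n_2^{(m)} - t \mid j)$, since after the jump at time $t$ the chains remain coupled in $\mathcal{E}$ and restart from state $j$. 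The formula for $N_{ij}^T$ with $i \in \mathcal{E}$, $j \in \mathcal{S}$ is analogous, except the jump at time $t$ is the common-shock exit $\tau_{1,2}^{(m)} = t$ with $M_1(t) = M_2(t) = j$; after this point $M_1$ and $M_2$ evolve independently in $\mathcal{S}$, so the coupled residual pmf splits as the product $(\bm{Q}_1^{n_1^{(m)} - t - 1} \bm{q}_1)_j (\bm{Q}_2^{n_2^{(m)} - t - 1} \bm{q}_2)_j$, and $p_{ij}$ is replaced by $u_{ij}$.

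For the second-phase statistics $N_{ij}^{B,k}$ and $N_i^{B,k}$ I would condition on the common-shock duration $\ell$ \emph{and} on the state $r \in \mathcal{S}$ entered at time $\ell$, and then on the time $t > \ell$ at which the jump of interest occurs in $M_k$. For $N_{ij}^{B,1}$ the targeted event is $\tau_{1,2}^{(m)} = \ell$, $M_1(\ell) = M_2(\ell) = r$, $M_1(t-1) = i$, $M_1(t) = j$, with all intermediate states in $\mathcal{S}$. The common-shock factorization, combined with the conditional independence of $M_1$ and $M_2$ after time $\ell$ given their shared state, produces $(\bm{\alpha} \bm{P}^{\ell - 1} \bm{U})_r (\bm{Q}_1^{t - \ell - 1})_{ri} \, q_{1,ij} \, (\bm{Q}_1^{n_1^{(m)} - t - 1} \bm{q}_1)_j \, (\bm{Q}_2^{n_2^{(m)} - \ell - 1} \bm{q}_2)_r$. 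Summation over $m, t, \ell, r$ and division by $f_{\tau_1,\tau_2}$ yields the stated expression. The counts $N_{ij}^{B,2}$, $N_i^{B,1}$, $N_i^{B,2}$ then follow by swapping the roles of $M_1$ and $M_2$ or by replacing the interior jump $q_{1,ij}$ with the absorption probability $q_{1,i}$ and dropping the subsequent $M_1$-tail factor.

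I expect the main technical obstacle to lie not in any single probability computation but in the careful indexing of the second-phase expectations. In particular, one must enforce $1 \leq \ell < t$, keep the intermediate states in the correct subspaces $\mathcal{E}$ and $\mathcal{S}$, and combine the two independent post-decoupling tails through the correct summation over $r \in \mathcal{S}$, which here takes the Hadamard-like form $(\bm{Q}_1^{\cdots}\bm{q}_1)_{\cdot}(\bm{Q}_2^{\cdots}\bm{q}_2)_r$ rather than the symmetric product seen in the first-phase formulas. Beyond this bookkeeping, only Bayes' rule, the Markov property, and the conditional-independence structure built into the CDPH construction are needed.
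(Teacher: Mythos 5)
Your proposal is correct and takes essentially the same route as the paper: the paper's own proof is a two-sentence sketch ("apply the disintegration formula by conditioning on the common shock\ldots the paths evolve independently thereafter"), and your argument—writing each sufficient statistic as a sum of indicators, applying Bayes' rule, and factorizing through the decoupling time $\tau_{1,2}$, the entry state $r\in\mathcal{S}$, and the conditional independence of $M_1$ and $M_2$ afterwards—is precisely that disintegration carried out in detail. No gap; your version simply makes explicit the bookkeeping the paper leaves to the reader.
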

\begin{proof}
The key in all expressions is to apply the disintegration formula by conditioning on the common shock. All the remaining calculations then follow along similar lines as in the univariate case, since the paths evolve independently thereafter. 
\end{proof}

\section{Simulations and real data analysis}\label{sec:simulation}

In this section, we investigate the flexibility and applicability of our proposed CDPH model on count data. Specifically, we conduct simulations involving bivariate Poisson and Poisson-Lindley distributions, followed by an application to real-world insurance claims data. Our focus is on assessing how well CDPH can capture or mimic these distributions.

\subsection{Bivariate Poisson distribution with common shocks}

We begin by demonstrating that our proposed CDPH model can effectively fit a bivariate Poisson distribution. Although the Poisson distribution itself is not a special case of the DPH distribution, we aim to show that the CDPH model provides a good approximation. 

To define the bivariate Poisson distribution, we let $(N_1^*,N_2^*)$ be a bivariate Poisson distributed pair such that $N_1^* := V_1 + Z$ and $N_2^* := V_2 + Z$, where $V_1$, $V_2$, and $Z$ are independent Poisson random variables with parameters $\lambda_{V_1}$, $\lambda_{V_2}$, and $\lambda_Z$, respectively. Under this construction, the marginal distributions are again Poisson with intensities $\lambda_{N_1^*} := \lambda_{V_1} + \lambda_Z$ and $\lambda_{N_2^*} := \lambda_{V_2} + \lambda_Z$. The joint pmf of $(N_1^*,N_2^*)$ is provided in Equation (2) of \cite{holgate1964bivPoisson}.

To evaluate the effectiveness of our model, we simulate data from $(N_1^*,N_2^*)$ and then fit $(N_1^*+2,N_2^*+2)$ using our CDPH approach\footnote{In this paper, the initial parameters for estimation are randomly generated (independent uniform random variables), suitably transformed to comply with the matrix constraints.}. An interesting aspect of the study is to observe how the proposed model performs as the value of the common shock rate $\lambda_Z$ increases, while keeping $\lambda_{N_1^*}$ and $\lambda_{N_2^*}$ fixed. Specifically, we fix $\lambda_{N_1^*} = 5$ and $\lambda_{N_2^*} = 4$ and consider three different values of $\lambda_Z$, namely 1, 2, and 3. The results are presented in Figures~\ref{fig:pois1}, \ref{fig:pois2}, and \ref{fig:pois3}. For each value of $\lambda_Z$, we explore three different model configurations with dimensions $(|\mathcal{E}|, |\mathcal{S}|) = (2,1)$, $(|\mathcal{E}|, |\mathcal{S}|) = (3,2)$, and $(|\mathcal{E}|, |\mathcal{S}|) = (4,3)$. Since matrix representations are not unique and hence not identifiable, the use of Akaike Information Criterion (AIC) and Bayesian Information Criterion (BIC) is not appropriate in this context. Instead, we track the evolution of the likelihood as a function of the EM iteration step to assess model performance.

The simulations suggest that as the common shock component becomes more pronounced, our model captures it more effectively. Nevertheless, in all cases, the model provides a satisfactory fit, which is quantified through heatmap plots, showing the best fitted pmf among the three considered models (which is the most complex one with dimensions $(|\mathcal{E}|, |\mathcal{S}|) = (4,3)$), as well as the absolute difference between the empirical pmf and the best fitted one.

\begin{figure}[!htbp]
\centering
\includegraphics[width=1\textwidth, trim= 0in 0in 0in 0in,clip]{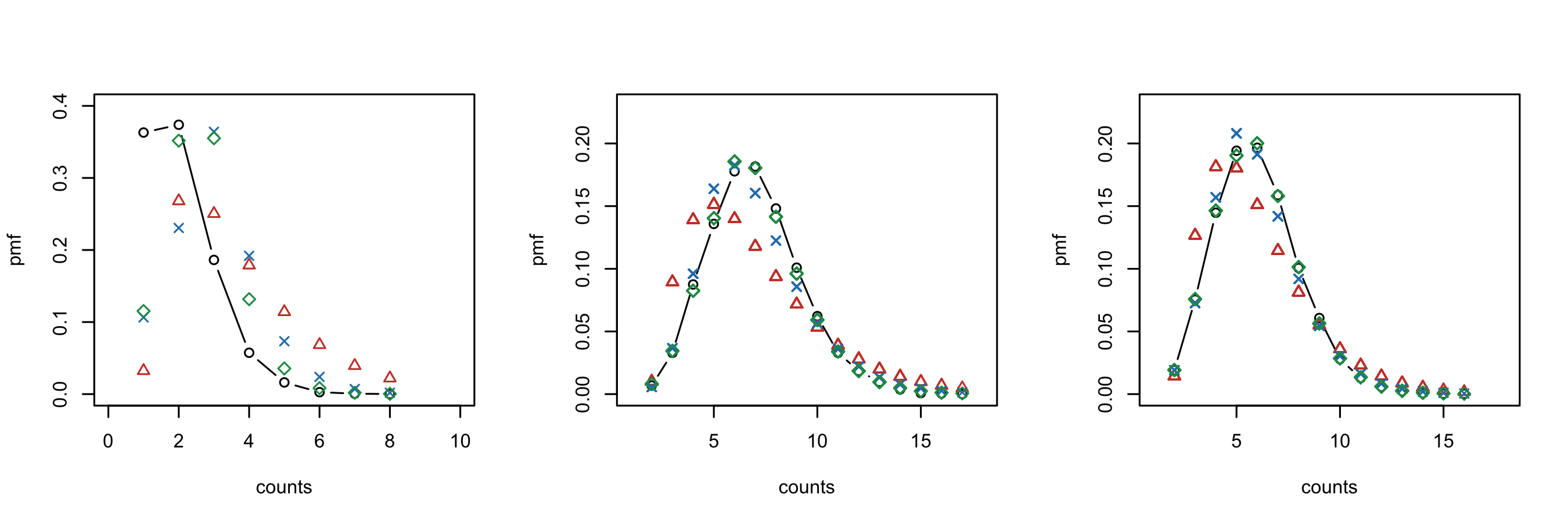}
\includegraphics[width=0.9\textwidth, trim= 0in 0in 0in 0in,clip]{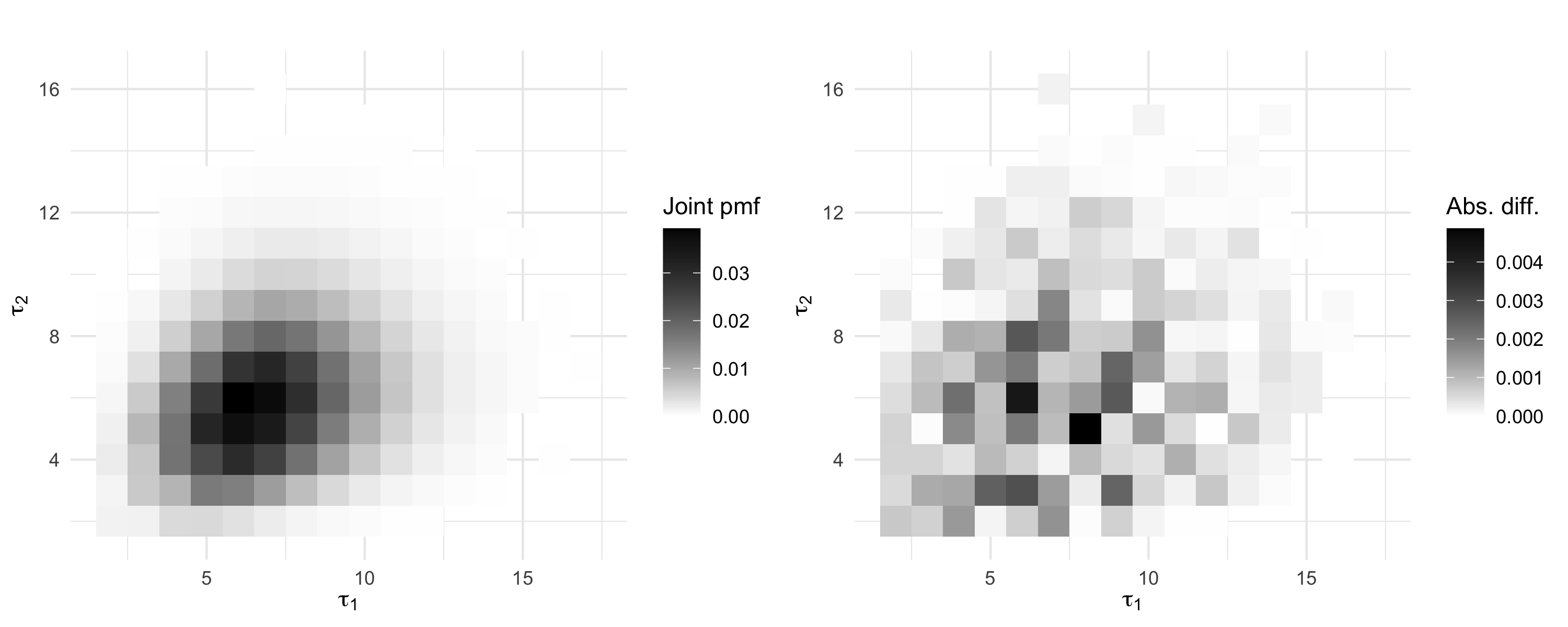}
\includegraphics[width=0.5\textwidth, trim= 0in 0in 0in 0in,clip]{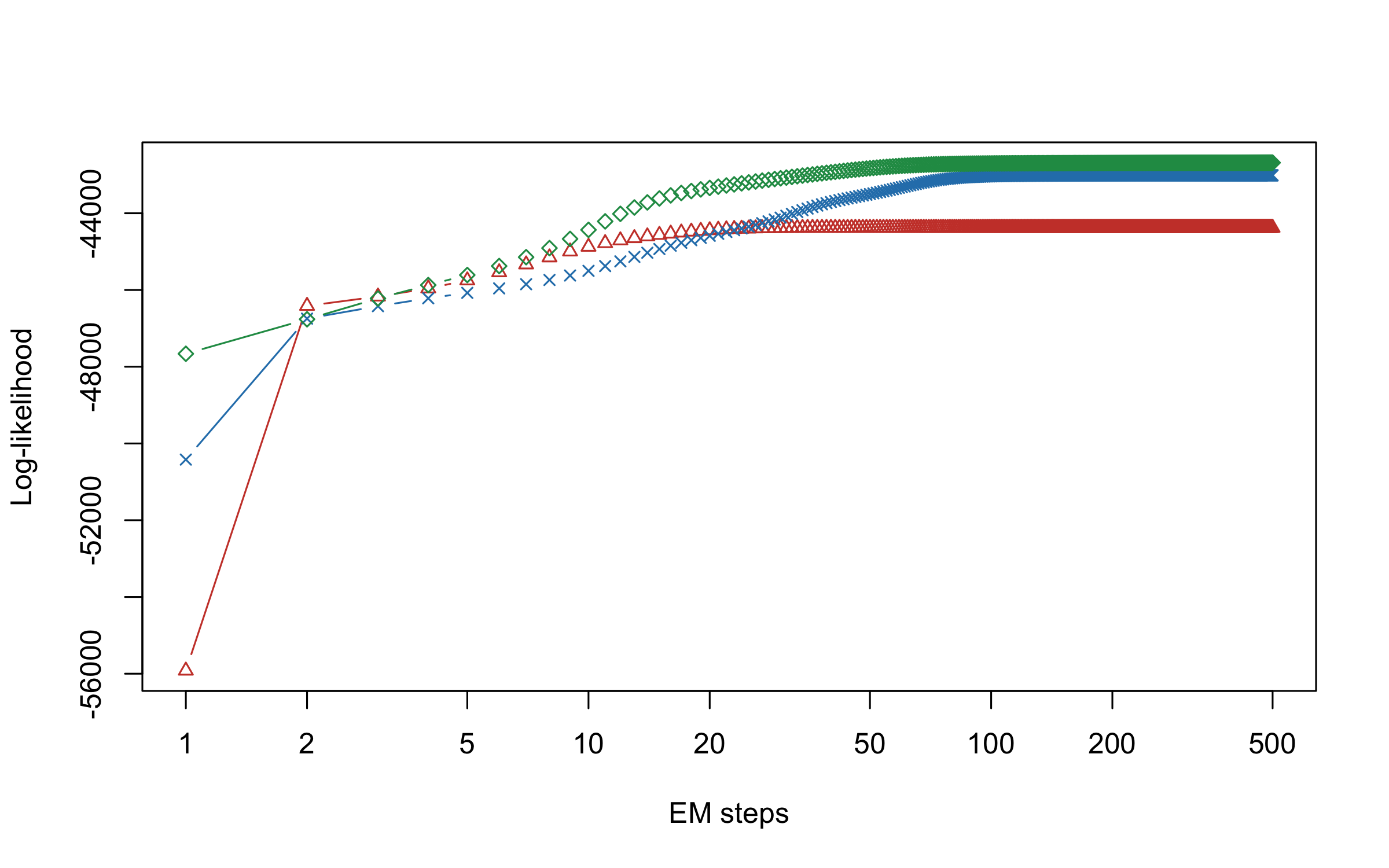}
\caption{Bivariate Poisson distribution with common shock intensity $\lambda_Z=1$. In the top panels we show the empirical pmf (black circles) versus the fitted pmf (by increasing matrix dimension: red triangles, blue crosses, green diamonds), for the distribution of the common shock (left), and the two marginals (center and right). In the middle panel we have two heatmaps, corresponding to the best fitted bivariate pmf (left), and the absolute difference between the best fitted and empirical bivariate pmf (right). The bottom panel shows the evolution of the likelihood for the three models as the number of EM steps increases to $500$.}
\label{fig:pois1}
\end{figure}

\begin{figure}[!htbp]
\centering
\includegraphics[width=1\textwidth, trim= 0in 0in 0in 0in,clip]{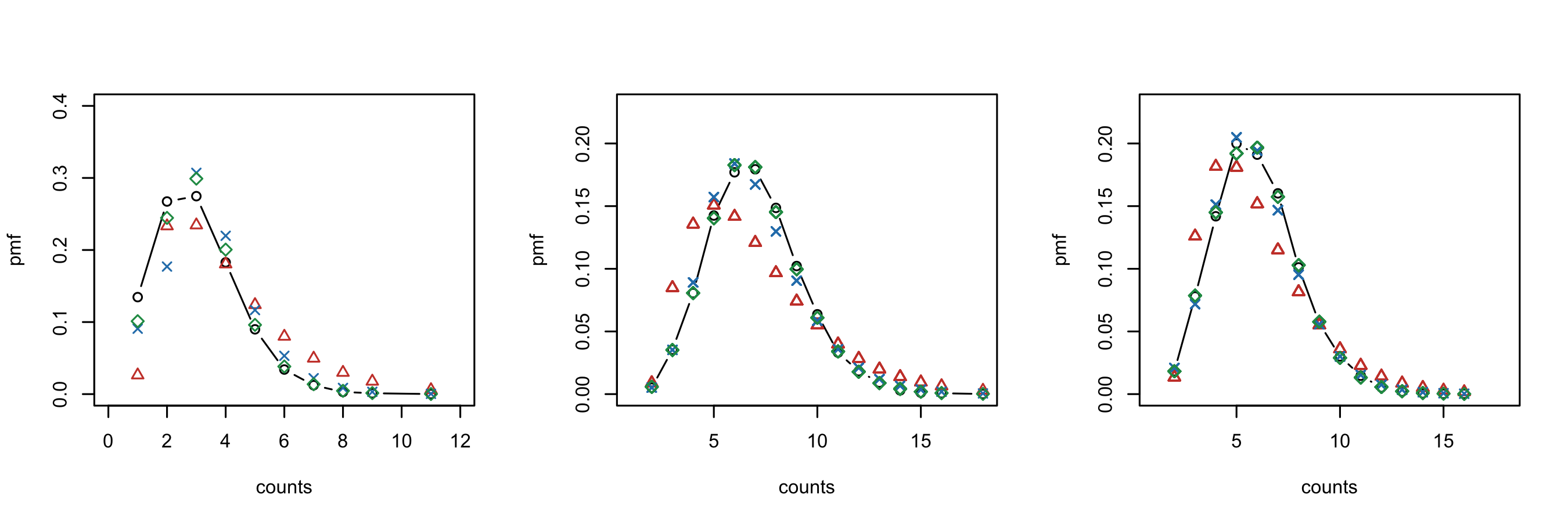}
\includegraphics[width=0.9\textwidth, trim= 0in 0in 0in 0in,clip]{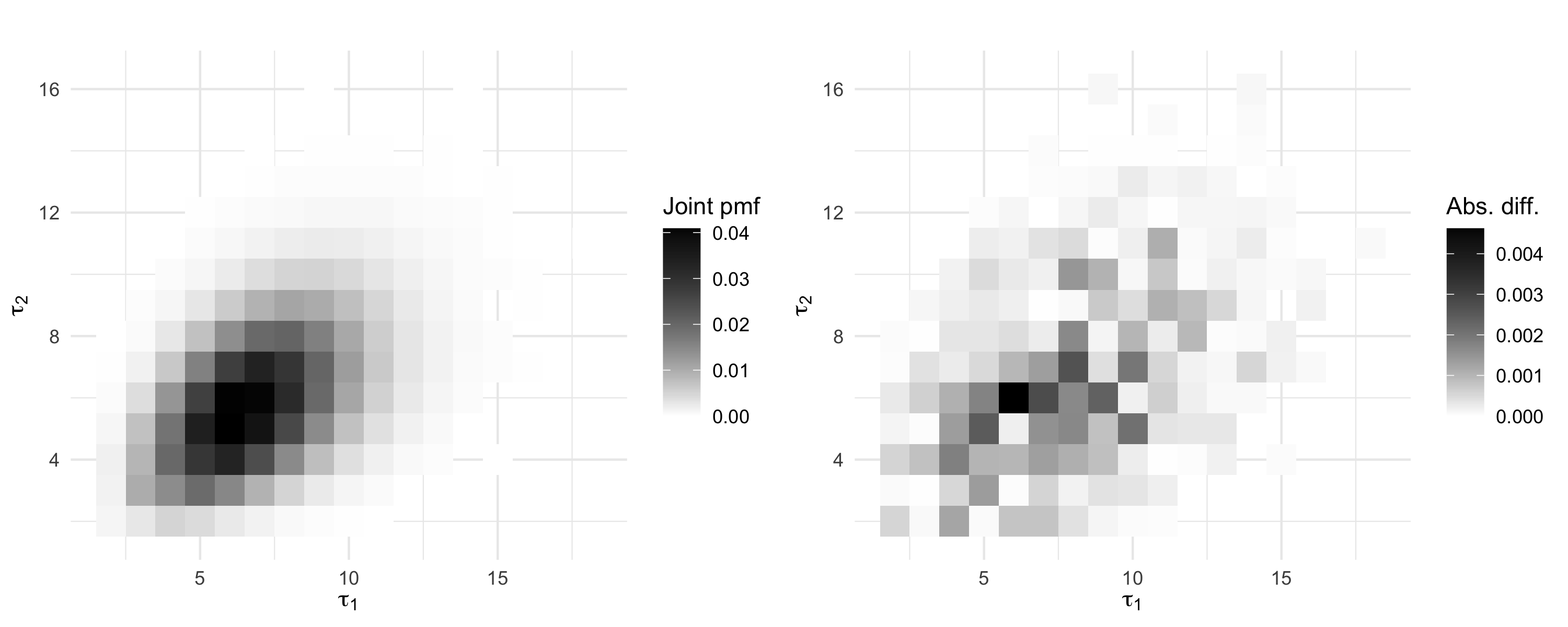}
\includegraphics[width=0.5\textwidth, trim= 0in 0in 0in 0in,clip]{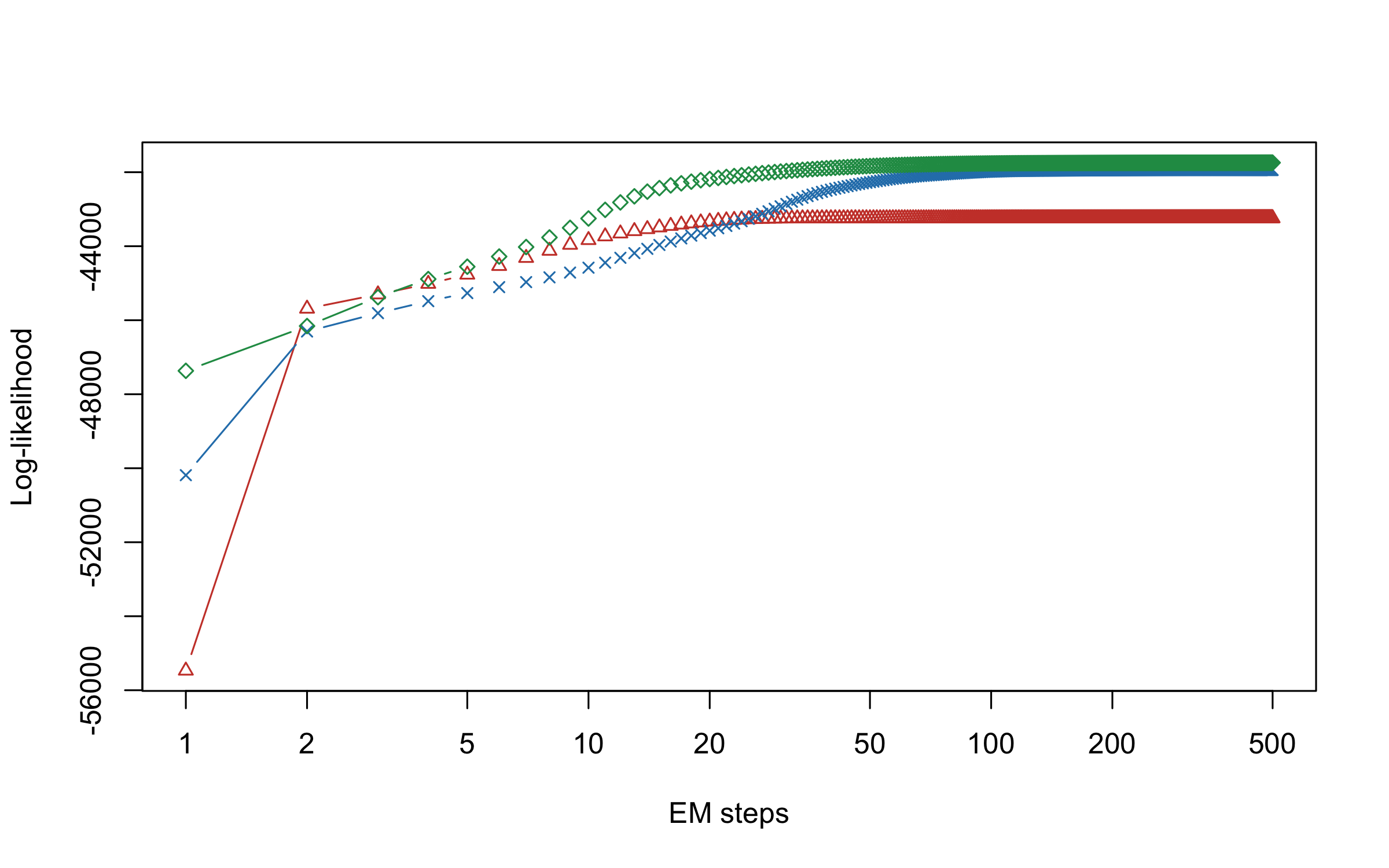}
\caption{Bivariate Poisson distribution with common shock intensity $\lambda_Z=2$. In the top panels we show the empirical pmf (black circles) versus the fitted pmf (by increasing matrix dimension: red triangles, blue crosses, green diamonds), for the distribution of the common shock (left), and the two marginals (center and right). In the middle panel we have two heatmaps, corresponding to the best fitted bivariate pmf (left), and the absolute difference between the best fitted and empirical bivariate pmf (right). The bottom panel shows the evolution of the likelihood for the three models as the number of EM steps increases to $500$.}
\label{fig:pois2}
\end{figure}

\begin{figure}[!htbp]
\centering
\includegraphics[width=1\textwidth, trim= 0in 0in 0in 0in,clip]{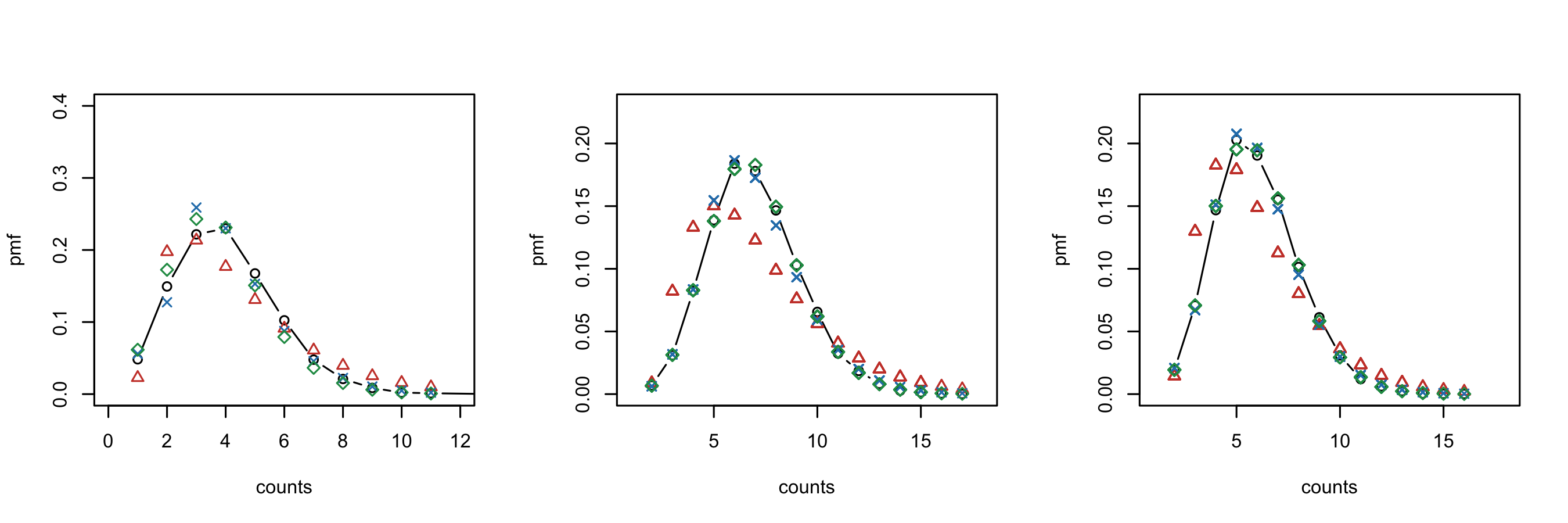}
\includegraphics[width=0.9\textwidth, trim= 0in 0in 0in 0in,clip]{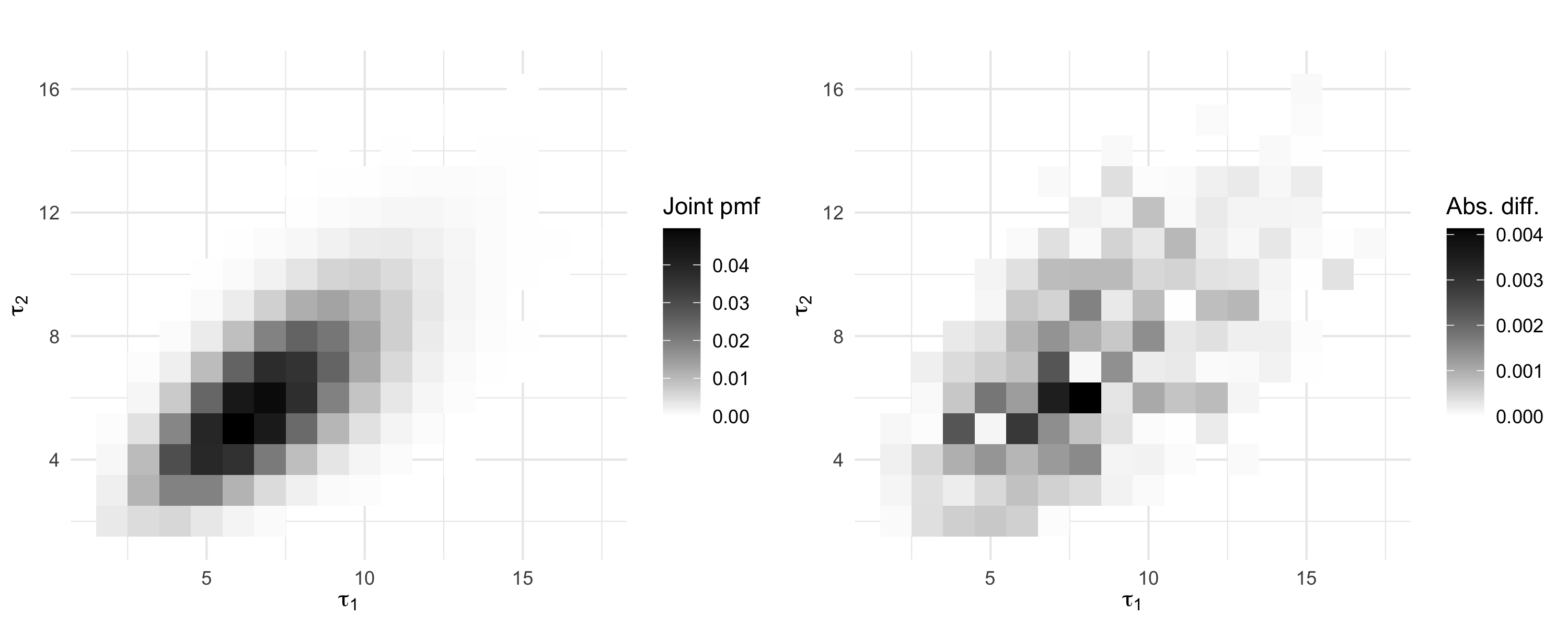}
\includegraphics[width=0.5\textwidth, trim= 0in 0in 0in 0in,clip]{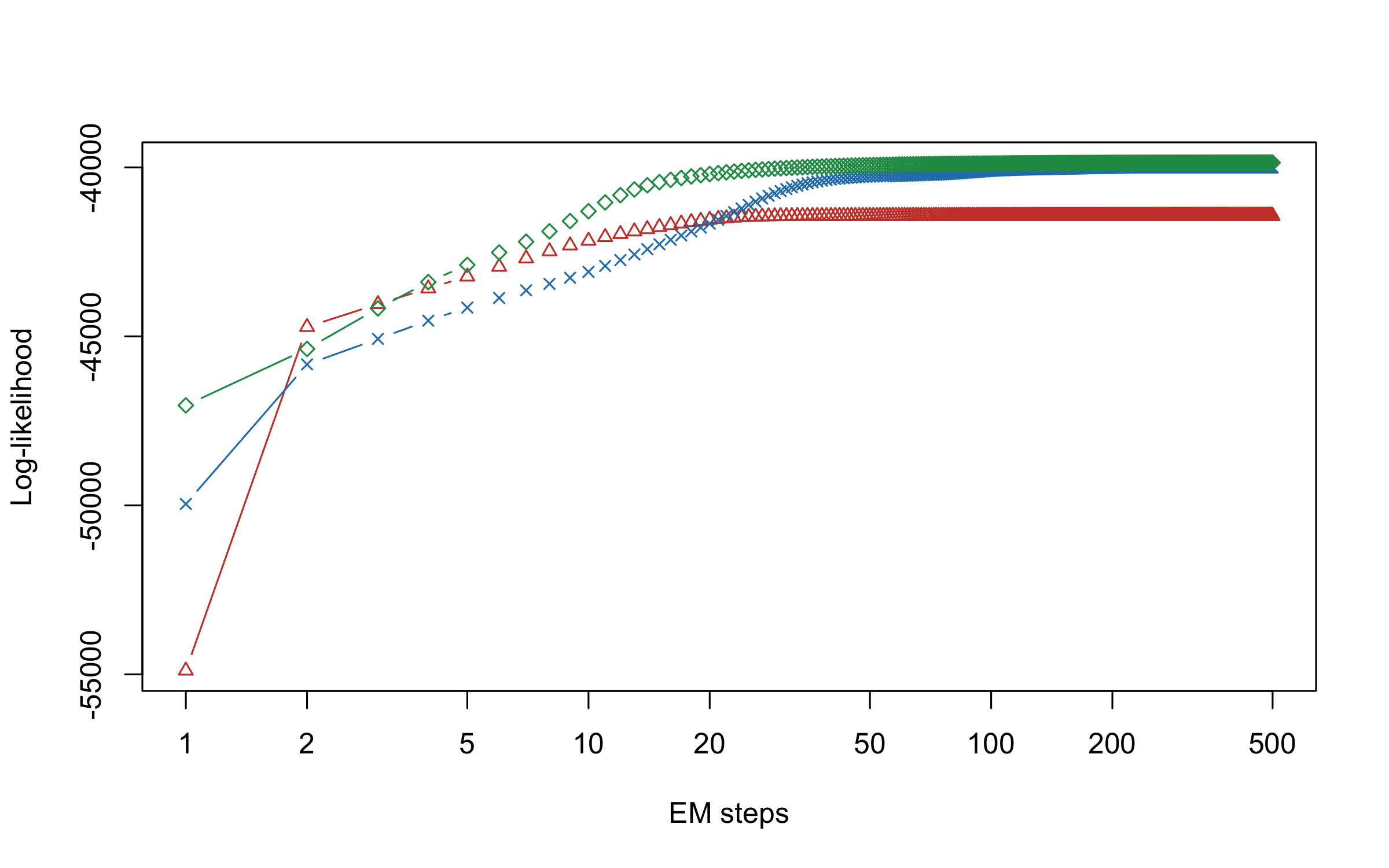}
\caption{Bivariate Poisson distribution with common shock intensity $\lambda_Z=3$. In the top panels we show the empirical pmf (black circles) versus the fitted pmf (by increasing matrix dimension: red triangles, blue crosses, green diamonds), for the distribution of the common shock (left), and the two marginals (center and right). In the middle panel we have two heatmaps, corresponding to the best fitted bivariate pmf (left), and the absolute difference between the best fitted and empirical bivariate pmf (right). The bottom panel shows the evolution of the likelihood for the three models as the number of EM steps increases to $500$.}
\label{fig:pois3}
\end{figure}

\subsection{Bivariate Poisson-Lindley distribution}

In addition to the bivariate Poisson case, we now investigate the ability of our model to capture overdispersion in claim counts, where the variance is not necessarily equal to the mean. The Poisson-Lindley is a mixture distribution that serves as a natural candidate for this setting. Specifically, the bivariate Poisson-Lindley distribution is constructed by considering conditionally independent Poisson random variables, where the Poisson parameter is mixed over a Lindley distribution that depends on a parameter $\theta$ (see Definition 2 in \cite{gomezdeniz2012PoissonLindley}). The joint pmf in the bivariate case is also provided in Equation (17) of \cite{gomezdeniz2012PoissonLindley}.

To examine the performance of our model, we simulate $(N_1^*, N_2^*)$ from this distribution and apply our CDPH model to the transformed data $(N_1^*+2, N_2^*+2)$. Concretely, we simulate a Lindley random variable $L$ with $\theta=2$, and subsequently construct $N_1^*$ and $N_2^*$ as independent Poisson with parameters $2L$ and $3L$, respectively. This is then repeated $10,000$ times. We again consider three model configurations with dimensions $(|\mathcal{E}|, |\mathcal{S}|) = (2,1)$, $(|\mathcal{E}|, |\mathcal{S}|) = (3,2)$, and $(|\mathcal{E}|, |\mathcal{S}|) = (4,3)$. The evolution of the likelihood across EM iterations is tracked, as in the previous study, to assess convergence and fitting accuracy. The results are provided in Figure~\ref{fig:lind}, and they further illustrate the adaptability of our CDPH approach in modeling overdispersed count data. Unlike the bivariate Poisson setting, the Poisson-Lindley mixture does not inherently possess a common shock component. Remarkably, however, our model is still able to effectively capture and mimic dependencies between the two components, demonstrating its practical modeling flexibility.

\begin{figure}[!htbp]
\centering
\includegraphics[width=1\textwidth, trim= 0in 0in 0in 0in,clip]{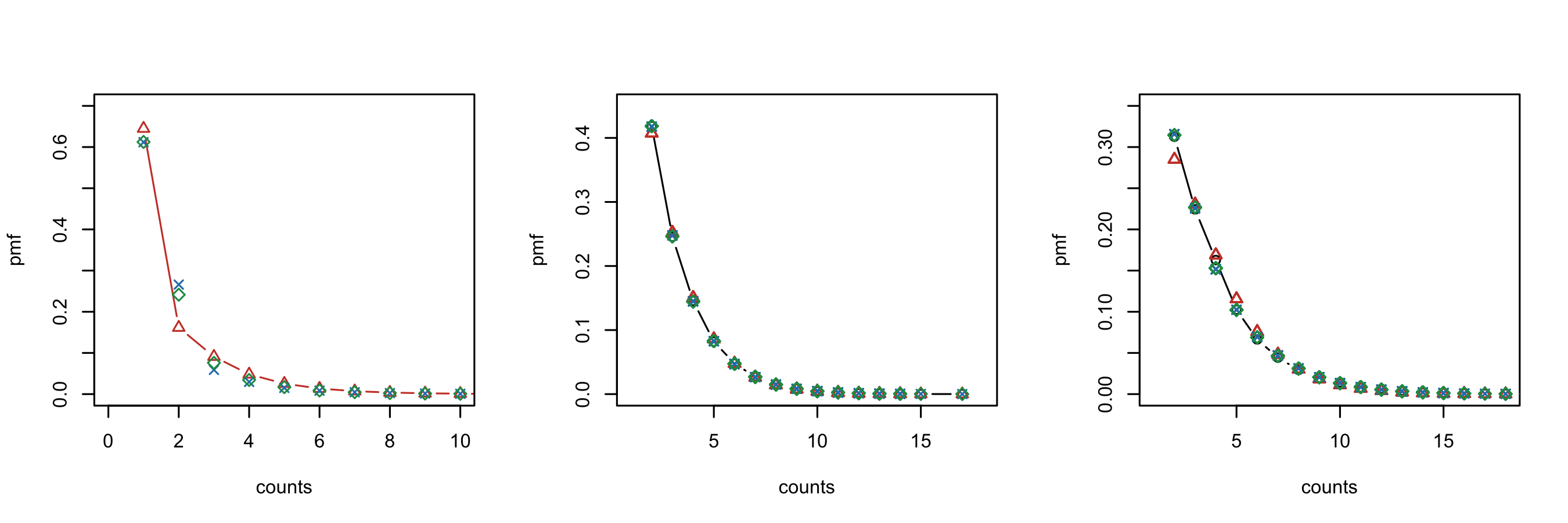}
\includegraphics[width=0.9\textwidth, trim= 0in 0in 0in 0in,clip]{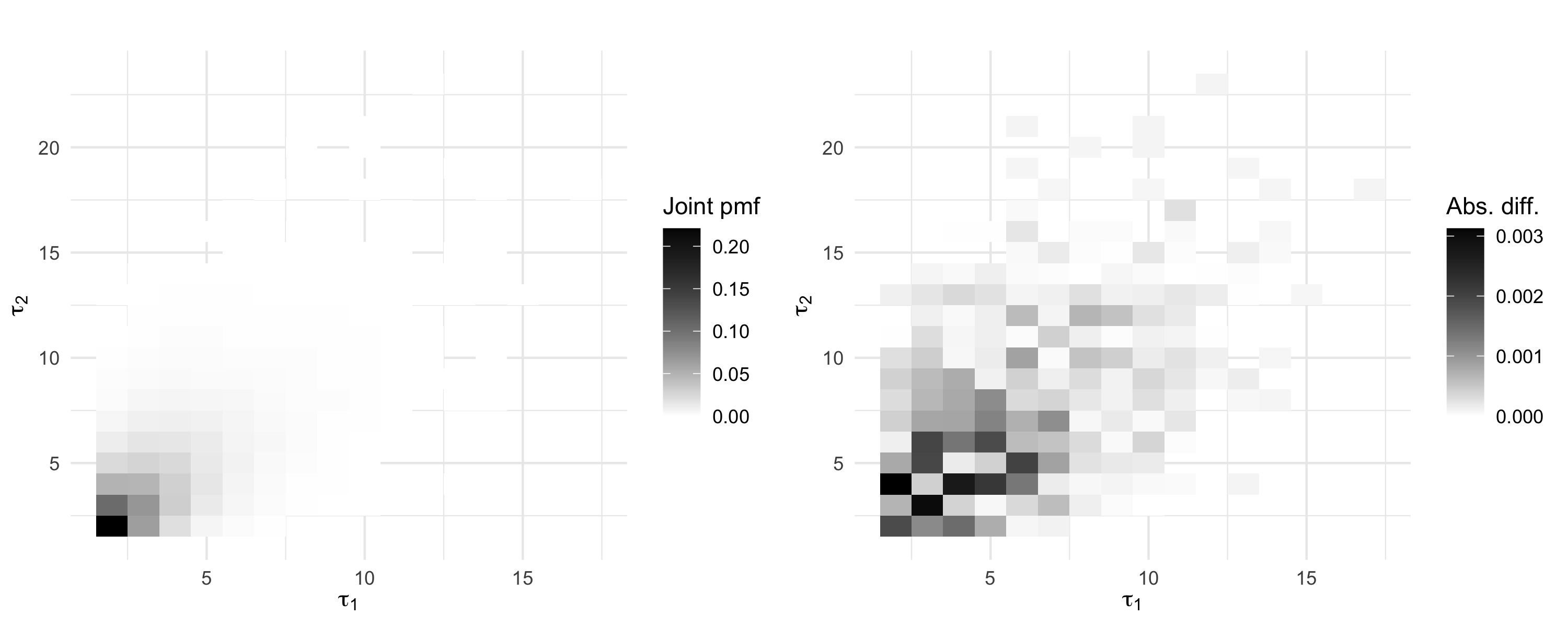}
\includegraphics[width=0.5\textwidth, trim= 0in 0in 0in 0in,clip]{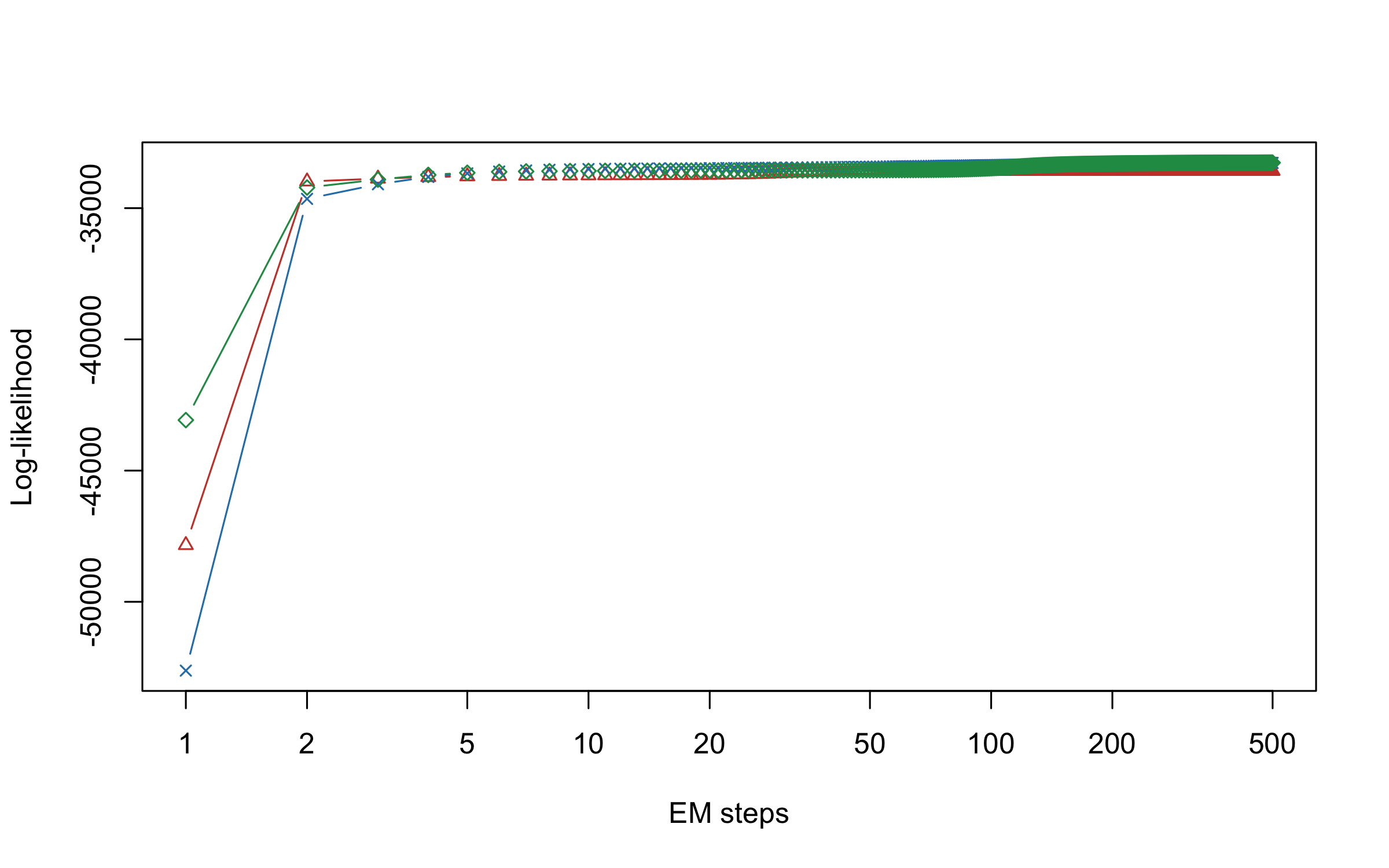}
\caption{Poisson-Lindley distribution with parameter $\theta=2$. In the top panels we show the empirical pmf (black circles) versus the fitted pmf (by increasing matrix dimension: red triangles, blue crosses, green diamonds) for the two marginals (center and right), and the fitted values for a suggested common shock (left). In the middle panel we have two heatmaps, corresponding to the best fitted bivariate pmf (left), and the absolute difference between the best fitted and empirical bivariate pmf (right). The bottom panel shows the evolution of the likelihood for the three models as the number of EM steps increases to $500$.}
\label{fig:lind}
\end{figure}

\subsection{Application to insurance data}

We now apply our model to real-world insurance data, previously analyzed in \cite{vernic2000data}. The dataset can be found in Table 1 of \cite{vernic2000data}, and was originally modeled using the bivariate generalized Poisson distribution (BGPD). It consists of claim frequencies from two related insurance categories, with a total of $708$ claims.

We fit this dataset to our proposed CDPH distribution, applying the same shift transformation as before. The plots in Figure~\ref{fig:real} illustrate the results for the same three different model configurations: $(|\mathcal{E}|, |\mathcal{S}|) = (2,1)$, $(|\mathcal{E}|, |\mathcal{S}|) = (3,2)$, and $(|\mathcal{E}|, |\mathcal{S}|) = (4,3)$. As with the previous cases, we track the likelihood evolution over EM iterations and evaluate the fit using probability plots. The CDPH model in general provides a good fit to the insurance dataset, even in the case of low dimensions. It is interesting to note that a non-negligible common shock component is estimated quite robustly by the three models, which by the previous simulation studies suggests that there could either be a true underlying common shock, or that the bivariate distribution is well approximated by the CDPH class. Despite the structural differences between BGPD and CDPH, our approach is also able to capture dependencies effectively.

\begin{figure}[!htbp]
\centering
\includegraphics[width=1\textwidth, trim= 0in 0in 0in 0in,clip]{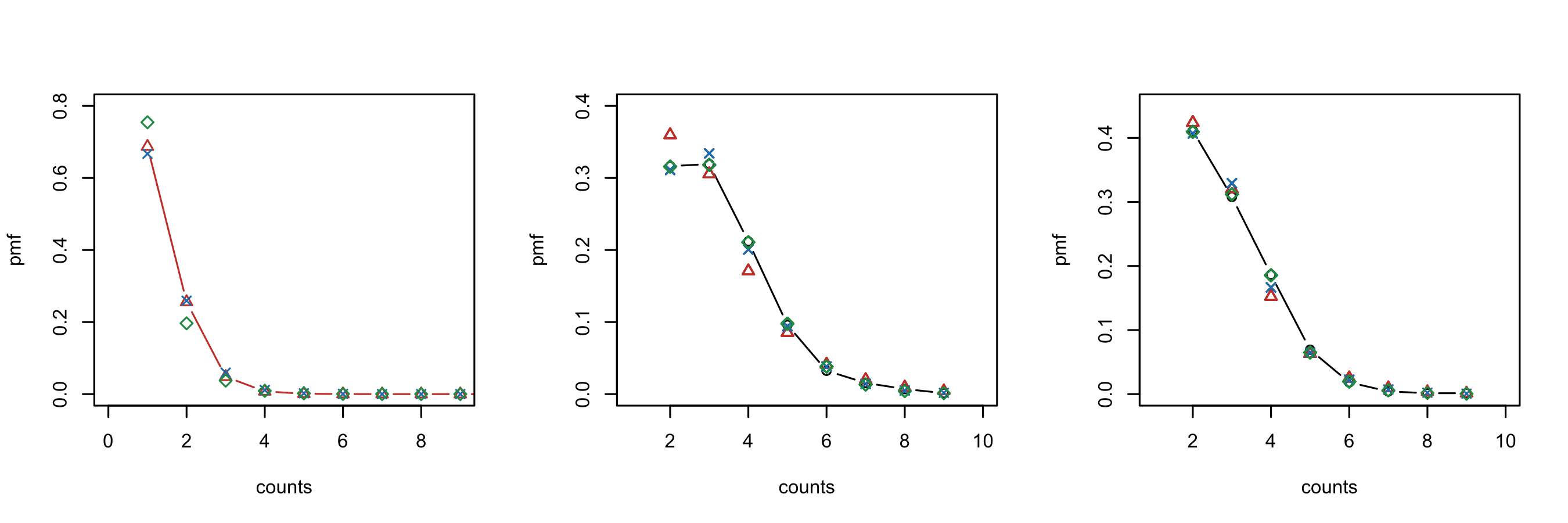}
\includegraphics[width=0.9\textwidth, trim= 0in 0in 0in 0in,clip]{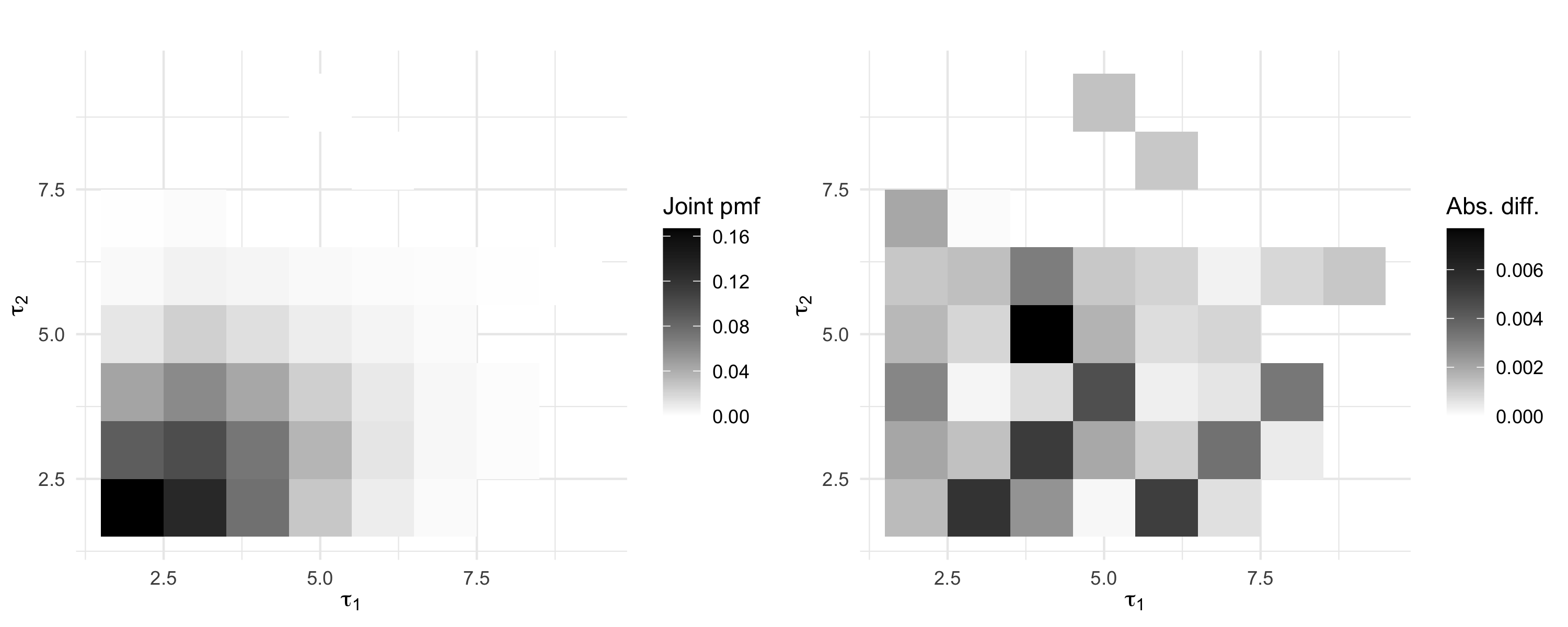}
\includegraphics[width=0.5\textwidth, trim= 0in 0in 0in 0in,clip]{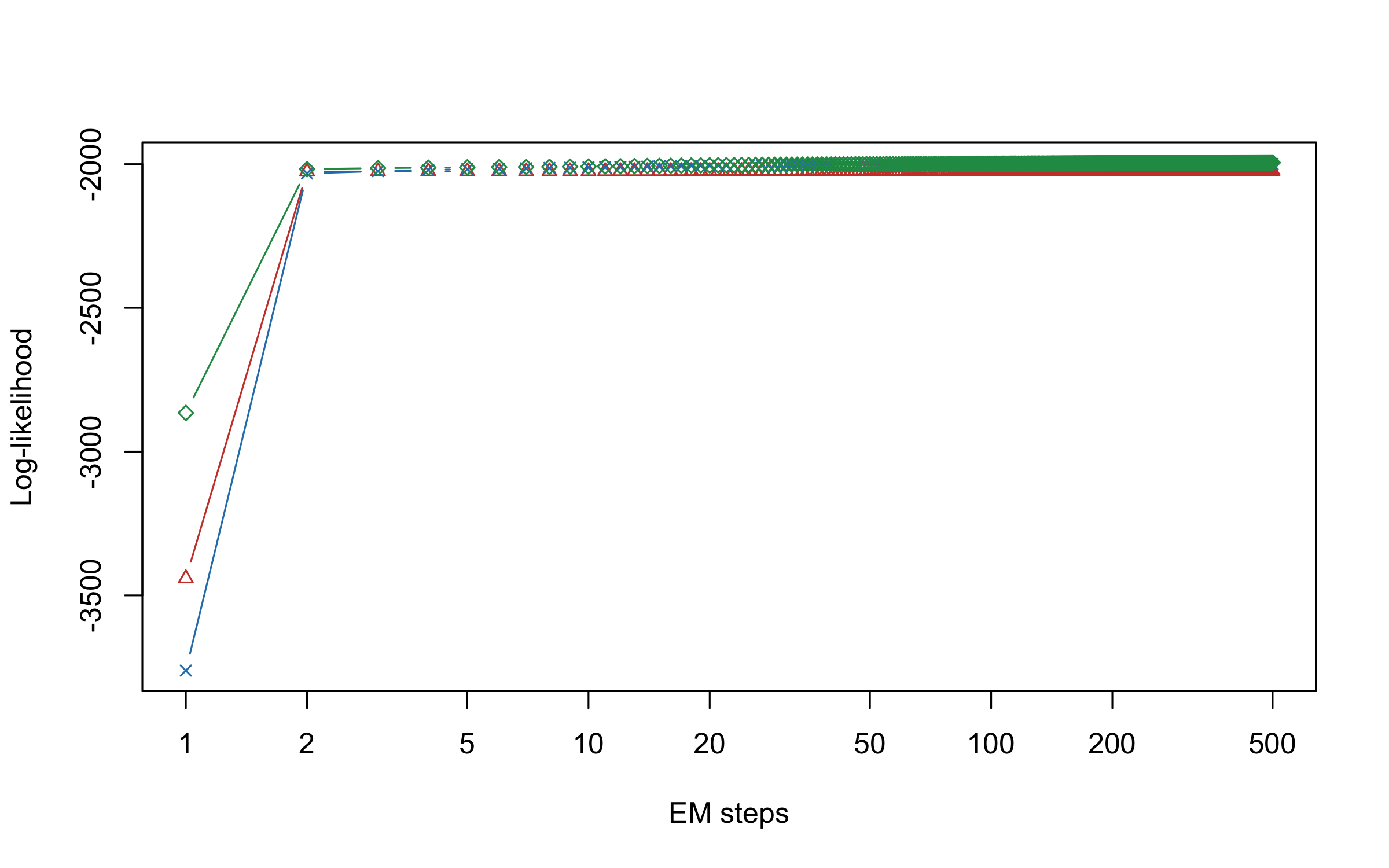}
\caption{Bivariate claim frequency insurance data. In the top panels we show the fitted pmf (by increasing matrix dimension: red triangles, blue crosses, green diamonds), for the distribution of the suggested common shock (left), and the two marginals (center and right). In the middle panel we have two heatmaps, corresponding to the best fitted bivariate pmf (left), and the absolute difference between the best fitted and empirical bivariate pmf (right). The bottom panel shows the evolution of the likelihood for the three models as the number of EM steps increases to $500$.}
\label{fig:real}
\end{figure}

\section{Conclusion}\label{sec:conclusion}

In this paper, we have introduced a novel class of common-shock bivariate discrete phase-type distributions, abbreviated as CDPH, to model dependencies in risk models induced by common shocks. The key idea is that the two random variables $(\tau_1,\tau_2)$ of the CDPH distribution are defined as the termination times of two Markov chains that first evolve jointly (resembling common shocks influenced by shared risk factors) and then proceed independently (following individual-specific dynamics). In addition to deriving explicit and tractable formulas for the joint pmf and the joint pgf, we have also established clear relationships between our CDPH class and other classes of bivariate DPH distributions proposed by \cite{bladt2023robust} and \cite{navarro2019order}. We have proved a number of closure properties of the CDPH class as well. In particular, $\min\{\tau_1,\tau_2\}$, $\max\{\tau_1,\tau_2\}$ and $\tau_1+\tau_2$ are shown to be DPH whereas mixtures and sums of independent and identically distributed copies of $(\tau_1,\tau_2)$ belong to the CDPH class. Further properties of compound sums with CDPH claim counts are also discussed, and an EM algorithm for the CDPH class is developed for parameter estimation and subsequently implemented in our numerical studies.

Our proposed class of bivariate CDPH distributions opens up new research directions for modeling dependencies in multivariate risk modeling, particularly in situations where common shocks play a significant role. Future research could explore extensions to higher dimensions, incorporate covariate information, or consider the tail behavior through the introduction of inhomogeneity functions. We leave these as open questions.

\vskip 1cm
\noindent\textbf{Acknowledgement.} 
MB would like to acknowledge financial support from the Swiss National Science Foundation Project 200021\_191984, as well as from the Carlsberg Foundation, grant CF23-1096. EC and JKW acknowledge the support from the Australian Research Council's Discovery Project DP200100615.


\newpage

\bibliography{common_shocks.bib}

\newpage

\noindent
Martin Bladt\\
{Department of Mathematical Sciences, University of Copenhagen, Copenhagen, Denmark}\\
{\it Email address:}~{\tt martinbladt@math.ku.dk}
\vskip .5 cm
\noindent
Eric C. K. Cheung\\
{School of Risk and Actuarial Studies, UNSW Business School, University of New South Wales, Sydney, Australia}\\
{\it Email address:}~{\tt eric.cheung@unsw.edu.au}
\vskip .5 cm
\noindent
Oscar Peralta\\
{Department of Actuarial and Insurance Sciences,
Autonomous Technological Institute of M\'exico,
M\'exico City, M\'exico}\\
{\it Email address:}~{\tt oscar.peralta@itam.mx}
\vskip .5 cm
\noindent
Jae-Kyung Woo\\
{School of Risk and Actuarial Studies, UNSW Business School, University of New South Wales, Sydney, Australia}\\
{\it Email address:}~{\tt j.k.woo@unsw.edu.au}

\end{document}